\def\e{\varepsilon}
\newtheorem{theorem}{\color{black}\indent Theorem}
\newtheorem{lemma}{\color{black}\indent Lemma}[section]
\newtheorem{proposition}{\color{black}\indent Proposition}
\newtheorem{remark}{\color{black}\indent Remark}[section]
\newtheorem{corollary}{\color{black}\indent Corollary}[section]
\DeclareMathOperator{\diag}{{diag}}
\journal{a}
\begin{document}

\begin{frontmatter}



\title{An Infinite-dimensional KAM Theorem with Normal Degeneracy}


\author{{Jiayin Du$^{a}$ \footnote{ E-mail address : dujy20@mails.jlu.edu.cn} ,~ Lu Xu$^{a,*}$ \footnote{*Corresponding author's e-mail address : xulu@jlu.edu.cn},~ Yong Li$^{a,b}$ \footnote{E-mail address : liyong@jlu.edu.cn} }\\
{$^{a}$College of Mathematics, Jilin University,} {Changchun 130012, P. R. China.}\\
{$^{b}$School of Mathematics and Statistics, and Center for Mathematics and Interdisciplinary Sciences, Northeast Normal
University, Changchun 130024, P. R.  China.}
 }

\begin{abstract}
In this paper, we consider a classical Hamiltonian normal form with degeneracy in normal direction. In previous results, one
needs to assume that the perturbation satisfies certain non-degenerate conditions in order to remove the degeneracy in the normal form. In stead
of that, we introduce a topological degree condition and a weak convexity condition, which are easy to be verified, and we prove the persistence of lower dimensional tori without any restriction on perturbation but only smallness and analyticity.
\end{abstract}

\begin{keyword}
Infinite dimensional Hamiltonian; normal degeneracy; KAM theory
\end{keyword}





\end{frontmatter}



\tableofcontents
\newpage

\section{Introduction and Main Theorem}
\setcounter{equation}{0}
\setcounter{definition}{0}
\setcounter{proposition}{0}
\setcounter{lemma}{0}
\setcounter{remark}{0}
\subsection{Introduction}\label{sec:1}
In the present paper, we consider the following Melnikov's persistence for infinite dimensional Hamiltonian with degeneracy in certain normal directions,
which is described in the following form
\begin{equation}\label{eq1}
H(x,y,u,\bar{u},\xi)=N(y,u,\bar{u},\xi)+\varepsilon P(x,y,u,\bar{u},\xi,\varepsilon),
\end{equation}
where $(x,y,u,\bar{u})\in\mathcal{P}^{\textsf{a},\textsf{p}}=\mathbb{T}^n\times{\mathbb{C}^n}\times \ell^{\textsf{a},\textsf{p}}\times \ell^{\textsf{a},\textsf{p}},$ $\xi\in\Pi\subset\mathbb{R}^n$, $n\ge1$ is a given integer, $\mathbb{T}^{n}$ is the standard $n$-torus, $\textsf{a}\geq0$ and $\textsf{p}\geq1$ are given
constants and $\ell^{\textsf{a},\textsf{p}}$ is the Hilbert space, $\varepsilon>0$ is a small parameter. The Hamiltonian is real analytic in $(x,y,u,\bar{u})$ and Lipschitz in parameter $\xi$. More specifically, integrable part $N$ is in the following form
\begin{align*}
N=\langle\omega(\xi), y\rangle+\langle u,\Omega^*(\xi)\bar{u}\rangle+g(z,\xi),
\end{align*}
where  $\Omega^*(\xi)=\diag(\Omega_0(\xi),\Omega(\xi))$, $g(z,\xi)$ consists of high order terms which will be specified later.
 For given integer $b>0$, denote
$\mathcal{N}=\{j_1<j_2<\cdots<j_b\}\subset\mathbb{N}_+$, and
$$\Omega_0(\xi)=\diag(\Omega_0^{j_1},\cdots,\Omega_0^{j_b})\in\mathbb{R}^{b\times b},~~\Omega(\xi)=\diag(\Omega^j\in\mathbb{R}:j\in\mathbb{N}_+\backslash\mathcal{N}),$$
with $\Omega_0^{j_i}\equiv0,~~i=1,\cdots,b.$ Divided the vectors $(u,\bar{u})$ into the form of
$u=:(w_0,w)$, $\bar{u}=:(\bar{w}_0,\bar{w})$ with
\begin{eqnarray*}
&&w_0=(w_{j_i}\in\mathbb{C},i=1,\cdots,b),~~w=(w_j\in\mathbb{C},j\in\mathbb{N}_+\backslash\mathcal{N}),\\
&&\bar{w}_0=(\bar{w}_{j_i}\in\mathbb{C},i=1,\cdots,b),~~\bar{w}=(\bar{w}_j\in\mathbb{C},j\in\mathbb{N}_+\backslash\mathcal{N}).
\end{eqnarray*}
Denote $z=:(w_0,\bar{w}_0)$ and $g(z,\xi)=o(\|z\|_{\textsf{a},\textsf{p}}^2).$
Associated with standard  symplectic structure ${\rm d}y\wedge {\rm d}x+\sqrt{-1}{\rm d}\bar{u}\wedge {\rm d}u$, $\mathcal{T}_0^n=\mathbb{T}^n\times\{y=0\}\times\{u=0\}\times\{\bar{u}=0\}$ is a $n$-dimensional invariant rotational torus for
the unperturbed Hamiltonian. Studying the persistence of $\mathcal{T}_0^n$ under small perturbation is a classical problem,
but it becomes more challenging when the normal form is degenerate.

In the $1980$s, Kuksin \cite{kuksin1} and Wayne \cite{wayne} initially applied the KAM iterative process to construct the lower dimensional tori in the infinite dimensional Hamiltonian systems and proved the existence of quasi-periodic solutions for the nonlinear Schr\"{o}dinger equations and wave equations. Similar results also proved by P\"{o}schel in \cite{poschel1,poschel2}. Bourgain \cite{bourgain1,bourgain2,bourgain3}  obtained a sharp persistence result in both finite and infinite dimensional cases only under the first Melnikov conditions, which therefore allows
the multiplicity of normal frequencies.

However, when degeneracy occurs in normal directions, lower dimensional tori do not necessarily survive under small perturbation, even for
the Hamiltonian in finite dimensional. As a result, one needs to attach certain non-degenerate assumption on perturbation $P$.
For instance, the authors in \cite{han}  considered the persistence of lower dimensional for the Hamiltonian as follow
\begin{eqnarray} \label{hlyh}
H(I,\theta,z,\e)=\langle\omega,I\rangle+\langle A(\omega)z,z\rangle+h(z)+\e P(I,\theta,z,\omega,\e),
\end{eqnarray}
where $(I,\theta,z)\in{\mathbb T}^{d}\times{\mathbb R}^{d}\times{\mathbb R}^{2n}$, $A(\omega)$ admits zero eigenvalues
and $h(z)=O(|z|^3)$. Under the assumption
\begin{eqnarray}\label{hlassumption2}
\frac{\partial [P(\cdot,0,\omega,\e)]}{\partial z}=0,~\quad\quad~\det\frac{\partial^2 [P(\cdot,0,\omega,\e)]}{\partial z^2}\ne 0,
\end{eqnarray}
it was proved that most of the $d$-tori $T_{\omega}=\{\omega\}\times\{I=0\}\times\{z=0\}$ persist under small perturbation.
In fact, assumption (\ref{hlassumption2}) on the perturbation can remove the singularity of $A(\omega)$, hence, it yields the persistence of the majority of invariant tori under a suitable weak Melnikov non-resonance condition. Similar assumptions can be found in \cite{li} and \cite{xly}. Recently, Wu and Yuan \cite{wu} investigated the existence of KAM tori for the infinite dimensional Hamiltonian systems with finite number of zeros among normal frequencies. The authors introduced a vector which consist of the first order term of perturbation $P$  yielding at each iterative step, and they proved that there exists a KAM torus if
the sum of those vectors tends to zero. Here, our assumptions provide sufficient conditions to ensure the existence of normal equilibrium during KAM process. More precisely, the topological degree condition in \textbf{(A0)} is one of transversality and the weak convex condition in \textbf{(A0)} controls the size of the perturbed normal equilibrium.


 In fact, Hamiltonian (\ref{eq1}) can be seen as an integral function of Hamiltonian lattice equations with
 degeneracy in certain directions, studying the existence of lower dimensional tori is contribute to describe
 the dynamical stability for such equations. However, the perturbations in certain applications are of
 complicated forms, so that it is difficult to verify assumptions in \cite{han,wu,xly}.
 Motivated by that, we construct the topological degree condition as well as the weak convexity condition on $g(z,\xi)$,
  which allows to degeneracy in normal direction, meanwhile, it is easily to be verified. See \textbf{(A0)} in Section \ref{sec:result} for details.
 It should be pointed out that the topological degree can be used to study frequency-preserving in the KAM theory, see \cite{du,tong,xu2010}.
 The weak convexity condition in \textbf{(A0)} is also indispensable, we show a counter example in smooth case in the last section.

Beside the small divisor problem, the other difficulty during KAM iterative process is to eliminate the first order terms
of the perturbation averaged with respect to angle variable $x$, that is,  we need to solve an
average equation in $(u,\bar{u})$-direction. Assumption \textbf{(A0)} guarantees that the average equation
is solved by an equilibrium in normal direction.
Consequently, we prove Hamiltonian (\ref{eq1}) admits a family of lower dimensional tori parameterized by $\xi$ varying in
certain Cantor set without any restriction for the perturbation $P$ except for smallness and analyticity.

\subsection{Notations}

In order to state our main result, we need some notations.
\begin{itemize}
\item[{(i)}] {Use $|\cdot|$ to denote the supremum norm of vectors, its induced matrix norm, absolute value of functions, and Lebesgue measure for sets.}
\item[{(ii)}] Endow the Hilbert space $\ell^{\textsf{a},\textsf{p}}$ with the following norm
    \begin{align*}
||u||_{\textsf{a},\textsf{p}}^2=||{w}_0||_{\textsf{a},\textsf{p}}^2+||{w}||_{\textsf{a},\textsf{p}}^2<\infty,~~~~u=({w}_0,w)\in\ell^{\textsf{a},\textsf{p}},
\end{align*}
where $||{w}_0||_{\textsf{a},\textsf{p}}^2=\sum_{j_i\in\mathcal{N}}|w_{j_i}|^2{j_i}^{2\textsf{p}}{\rm e}^{2\textsf{a}j_i}$, $||{w}||_{\textsf{a},\textsf{p}}^2=\sum_{j\in\mathbb{N}_+\backslash\mathcal{N}}|w_j|^2j^{2\textsf{p}}{\rm e}^{2\textsf{a}j}$.
\item[{(iii)}] Denote the complex neighborhoods of $\mathcal{T}^n_0$
    \begin{align*}
D(s,r)=\{(x,y,u,\bar{u}):|\textrm{Im}x|<s,|y|<r^2,||z||_{\textsf{a},\textsf{p}}<r,||w||_{\textsf{a},\textsf{p}}<r^a,||\bar{w}||_{\textsf{a},\textsf{p}}<r^a\}
\end{align*}
with $0<s, r<1$, $a\geq 2$, which will be defined in (\ref{a}).
\item[{(iv)}] For $r>0$ and $\bar{\textsf{p}}\geq \textsf{p}$, we define the weighted phase space norms
\begin{align*}
\pmb{\|}W\pmb{\|}_r=\pmb{\|}W\pmb{\|}_{ \bar{\textsf{p}},r}=\frac{|X|}{r^{a-2}}+\frac{|Y|}{r^a}+\frac{||\tilde U||_{\textsf{a},\bar {\textsf{p}}}}{r^{a-1}}+||U_{\bar{w}}||_{\textsf{a},\bar {\textsf{p}}}+||V_{{w}}||_{\textsf{a},\bar {\textsf{p}}},
\end{align*}
for $W=(X,Y,U,V)\in\mathcal{P}^{\textsf{a},\bar{\textsf{p}}}$, $U=(U_{\bar w_0},U_{\bar w})$, $V=({V}_{w_0},{V}_{w})$, $\tilde U=(U_{\bar w_0},{V}_{w_0})$.
And we denote $$\pmb{\|}W\pmb{\|}_{r,D(s,r)}=\sup_{D(s,r)}\pmb\|W\pmb\|_r.$$
Assume that $W$ is Lipschitz in parameter $\xi$, we denote its Lipschitz semi-norm
$$\pmb\|W\pmb\|_r^{\mathcal{L}}=\sup_{\xi\neq\zeta,~ \xi,\zeta\in \Pi}\frac{\pmb\|\Delta_{\xi\zeta}W\pmb\|_r}{|\xi-\zeta|},$$
where $\Delta_{\xi\zeta}W=W(\cdot,\xi)-W(\cdot,\zeta)$. Moreover, the Lipschitz semi-norms of $\omega$ and $\Omega$, i.e., $|\omega|_{\Pi}^{\mathcal{L}}$ and $\pmb|\Omega\pmb|_{r,\Pi}^{\mathcal{L}}$,  are defined analogously to $\pmb\|W\pmb\|_r^{\mathcal{L}}$, where $\pmb|\Omega\pmb|_{r}=\sup_j|\Omega_j|j^{r}$.
\item[{(v)}] For $\lambda\geq0$, we define $\pmb\|\cdot\pmb\|_r^\lambda=\pmb\|\cdot\pmb\|_r+\lambda\pmb\|\cdot\pmb\|_r^{\mathcal{L}}$. Also, $\pmb\|\cdot\pmb\|_r^*$ stands for $\pmb\|\cdot\pmb\|_r$ or $\pmb\|\cdot\pmb\|_r^{\mathcal{L}}$.
\item[{(vi)}]  We introduce the notations $$\langle l\rangle_d=\max(1, |\sum_j j^dl_j|),~~~\mathcal{Z}=\{(k,l)\neq0, |l|\leq2\}\subset\mathbb{Z}^n\times\mathbb{Z}^\infty,~~~J=\left(\begin{array}{ccc}
0&I_{b}\\
-I_{b}&0\\
\end{array}\right),$$
where $I_{b}$ is the $b$-order identity matrix.
\end{itemize}

\subsection{Statement of results}\label{sec:result}
We consider (\ref{eq1}), i.e.,
\begin{equation*}
\left\{
\begin{array}{ll}
H:\mathbb{T}^n\times{G}\times \ell^{\textsf{a},\textsf{p}}\times \ell^{\textsf{a},\textsf{p}}\times \Pi\rightarrow \mathbb{R}^1,\\
H(x,y,u,\bar{u},\xi)=\langle\omega(\xi), y\rangle+\langle w, \Omega(\xi)\bar{w}\rangle+{g(z,\xi)}+\varepsilon P(x,y,u,\bar{u},\xi),
\end{array}
\right.
\end{equation*}
where $u=(w_0,w)$, $\bar u=(\bar w_0,\bar w)$, $z=(w_0,\bar w_0)$, $g=o(||z||_{\textsf{a},\textsf{p}}^2)$, $\omega$, $\Omega$, $g$ and $P$ are Lipschitz in parameter $\xi$, $g$ is real analytic in $z$, $P$ is real analytic in $(x,y,u,\bar{u})$ and $\varepsilon>0$ is a small parameter.

First, we make the following assumptions:
\begin{itemize}
\item[(\textbf{A0})]
{For fixed $\zeta_0=0\in O^o\subset \mathbb{C}^{2b}$, where $O$ is a bounded closed domain and {$O^o:=O\setminus\partial O$}, there are $\sigma>0$, $L\geq2$ such that
\begin{align*}
&\deg(\nabla{g(z)}-\nabla{g(\zeta_0)}, O^o, 0)\neq0,\\
&||\nabla{g(z)}-\nabla{g(z_*)}||_{\textsf{a},\bar {\textsf{p}}}\geq\sigma||z-z_*||_{\textsf{a},\textsf{p}}^L,~~z,z_*\in O,
\end{align*}
where $\bar p\geq p$, $\nabla g(z)=(\partial_{z_1}g(z), \partial_{z_2}g(z), \cdots, \partial_{z_{2b}}g(z))$.}
\item[(\textbf{A1})]
{The mapping $\xi\rightarrow\omega(\xi)$ is a lipeomorphism, that is, a homemorphism which is Lipschitz continuous in both directions. Moreover, for all integer vectors $(k,\ell)\in\mathbb{Z}^n\times\mathbb{Z}^\infty$ with $1\leq|\ell|\leq2$, $$|\{\xi:\langle k,\omega(\xi)\rangle+\langle\ell,\Omega(\xi)\rangle=0\}|=0,$$
where $|\cdot|$ denotes Lebesgue measure for sets, $|\ell|=\sum_{j}|\ell_j|$ for integer vectors, and $\langle\cdot,\cdot\rangle$ is the usual scalar product.}
\item[(\textbf{A2})]
There exist $d\geq1$ and $\delta<d-1$ such that $$\Omega_j(\xi)=j^d+\cdots+O(j^\delta),$$ where the dots stand for fixed lower order terms in $j$, allowing also negative
exponents. More precisely, there exists a fixed, parameter-independent sequence $\bar\Omega$ with $\bar\Omega_j=j^d+\cdots$ such that the tails $\tilde \Omega_j=\Omega_j-\bar\Omega_j$ give rise to a Lipschitz map $$\tilde\Omega:\Pi\rightarrow\ell_\infty^{-\delta},$$ where $\ell_\infty^{-\delta}$ is the space of all real sequences with finite norm $\pmb|\tilde\Omega\pmb|_{-\delta}=\sup_j|\tilde\Omega_j|j^{-\delta}$.
\item[(\textbf{A3})]
The perturbation $P$ is real analytic in the space
coordinates and Lipschitz in the parameters. For each $\xi\in \Pi$, its Hamiltonian vector space field $X_P=(P_y, -P_x, \sqrt{-1}P_{\bar u}, -\sqrt{-1}P_{u})^\top$ defines near $\mathcal{T}_0^n$ a real analytic map $$X_P:\mathcal{P}^{\textsf{a},\textsf{p}}\rightarrow\mathcal{P}^{\textsf{a},\bar {\textsf{p}}},~~~\left\{\begin{array}{cc}
\bar {\textsf{p}}\geq \textsf{p} ~~for ~d>1,\\
\bar {\textsf{p}}> \textsf{p} ~~for ~d=1.
\end{array}
\right.$$

\end{itemize}

Now, we can state our main result:
\begin{theorem}\label{th1}
{Consider the Hamiltonian (\ref{eq1}) and assume $(\textbf{A0})$-$(\textbf{A3})$ hold.
Let $m\geq L+\frac{\sqrt{4L^2+2L}}{2}$, $\tau\geq n-1$, $\mu$ is a fixed positive integer such that $(1+\frac{1}{2m})^\mu\geq2$, and
\begin{align}\label{a}
a=\left\{\begin{array}{lll}
[\frac{m+1}{3}]+1, ~if~ \frac{m+1}{3}~ is~ not~ integer,\\
\frac{m+1}{3},~~~~~~~~~if~ \frac{m+1}{3}~ is~ integer.
\end{array}\right.
\end{align}
 Then, there exists a sufficiently small $\varepsilon_0>0$,  a Cantor set $\Pi_\varepsilon\subset\Pi$, a Lipschitz continuous family of torus embeddings
$\Psi^*:\mathbb{T}^n\times\Pi_{\varepsilon}\rightarrow\mathcal{P}^{\textsf{a},\bar {\textsf{p}}}$, which is real analytic on $|\rm{Im}~ x|<\frac{s}{2}$ and close to the
identity, and a Lipschitz continuous map $\omega_*:\Pi_\varepsilon\rightarrow\mathbb{R}^n$,
such that
for each $\xi\in\Pi_\varepsilon$, the map $\Psi^*$ restricted to $\mathbb{T}^n\times\{\xi\}$ is a real analytic embedding of a rotational torus with frequencies $\omega_*(\xi)$ for the Hamiltonian $H$ at $\xi$. Moreover, the Cantor set $\Pi_{\e}$ and $\omega^*$ satisfy the following estimates,
\begin{eqnarray}
&&|{\rm meas}~(\Pi\setminus \Pi_{\e})|\to 0,\quad {\rm~as~}\e\to 0,\\
&&|\omega_*-\omega|_{\Pi}+\frac{\gamma}{2M}|\omega_*-\omega|_{\Pi}^{\mathcal{L}}\leq c\varepsilon^{\frac{3m}{32\mu(m+1)(m-a)(\tau+1)}}.
\end{eqnarray}
where $\gamma:=\varepsilon^{\frac{1}{32(2b)^{m+2}\mu(m+1)(m-a)(\tau+1)}}$ and  $M:=|\omega|_{\Pi}^{\mathcal{L}}+\pmb|\Omega\pmb|_{-\delta,\Pi}^{\mathcal{L}}<\infty$.

}
\end{theorem}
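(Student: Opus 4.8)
The plan is to prove Theorem~\ref{th1} by a KAM iteration whose only novelty, forced by the normal degeneracy $\Omega_0\equiv0$, is a \emph{symplectic translation in the degenerate normal coordinates} $z=(w_0,\bar w_0)$ that produces at each scale the normal equilibrium demanded by the homological equation; existence of this equilibrium comes from the topological degree in \textbf{(A0)}, and its size is pinned down by the weak convexity in \textbf{(A0)}. Concretely, I would fix $m,a,\tau,\mu$ as in the statement, rescale so that the effective perturbation on $D(s,r)$ has weighted size $\varepsilon_0=\pmb\|X_P\pmb\|^{\gamma}_{\bar{\textsf p},r,D(s,r)}$ equal to a suitable positive power of $\varepsilon$, and construct inductively sequences $s_\nu\downarrow s/2$, $r_\nu\downarrow0$, $\gamma_\nu\downarrow\gamma/2$, integers $K_\nu\uparrow\infty$, nested sets $\Pi_\nu$, and real-analytic symplectic near-identity maps $\Phi_\nu:D(s_{\nu+1},r_{\nu+1})\times\Pi_{\nu+1}\to D(s_\nu,r_\nu)$ with $H\circ\Phi_0\circ\cdots\circ\Phi_\nu=N_{\nu+1}+P_{\nu+1}$, where $N_{\nu+1}=\langle\omega_{\nu+1},y\rangle+\langle w,\Omega_{\nu+1}\bar w\rangle+g_{\nu+1}(z)$ keeps the structural form, $g_{\nu+1}$ still satisfies a version of \textbf{(A0)} with the same $L$ and a comparable $\sigma$, and $\pmb\|X_{P_{\nu+1}}\pmb\|^{\gamma_{\nu+1}}_{\bar{\textsf p},r_{\nu+1},D(s_{\nu+1},r_{\nu+1})}\le\varepsilon_{\nu+1}=\varepsilon_\nu^{1+\kappa}$ for a fixed $\kappa>0$ (one may take $\kappa=\tfrac1{2m}$). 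Then $\Phi^\nu=\Phi_0\circ\cdots\circ\Phi_\nu$ converges on $D(s/2,0)\times\Pi_\varepsilon$ with $\Pi_\varepsilon=\bigcap_\nu\Pi_\nu$, and $\Psi^*:=\lim_\nu\Phi^\nu|_{\mathcal T_0^n}$, $\omega_*:=\lim_\nu\omega_\nu$ are the objects in the conclusion, all stated estimates coming from the telescoping sums $\sum_\nu(\omega_{\nu+1}-\omega_\nu)$ and $\sum_\nu|\Pi_\nu\setminus\Pi_{\nu+1}|$.

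The core is one step applied to $H=N+P$. I Fourier-truncate $P$ at order $K_\nu$ and extract its low-order part $R$ (terms up to first order in $y$ and up to second order in $(u,\bar u)$), seeking a generating Hamiltonian $F$ of the same shape with $\Phi=X_F^1$. The homological equation $\{N,F\}+R=[R]$ splits into two regimes. The Fourier modes $k\ne0$, together with the normal modes $(k,l)$ with $l$ not supported on $\mathcal N$, are solved the classical way by dividing by $\langle k,\omega\rangle$ or $\langle k,\omega\rangle+\langle l,\Omega\rangle$, which are bounded below on $\Pi_\nu$ by the Diophantine conditions imposed below — this is where \textbf{(A1)} (for the first Melnikov set) and \textbf{(A2)} (so that for each $k$ only finitely many $l$ with $|l|\le2$ are dangerous, the asymptotics $\Omega_j=j^d+\cdots$ making the rest automatic) are used, with $g$ handled perturbatively since it is $o(\|z\|^2)$. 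The obstructive part is the $x$-average of the terms polynomial in $z$ only, whose divisor $\langle l,\Omega_0\rangle\equiv0$: these I absorb into the normal form, forming $\tilde g_\nu(z)=g_\nu(z)+\varepsilon_\nu\bigl(\langle c_\nu,z\rangle+\langle z,B_\nu z\rangle+\cdots\bigr)$, and then recentre it at a critical point. Here \textbf{(A0)} enters twice: since $\nabla g_\nu(0)=0$ and $\nabla\tilde g_\nu$ is an $O(\varepsilon_\nu)$ perturbation of $\nabla g_\nu-\nabla g_\nu(\zeta_0)$, homotopy invariance of the degree together with $\deg(\nabla g_\nu-\nabla g_\nu(\zeta_0),O^o,0)\ne0$ yields a zero $\zeta_{\nu+1}\in O^o$ of $\nabla\tilde g_\nu$, and the weak convexity $\|\nabla g_\nu(z)-\nabla g_\nu(z_*)\|_{\textsf a,\bar{\textsf p}}\ge\sigma\|z-z_*\|_{\textsf a,\textsf p}^L$, applied at $z=0$, $z_*=\zeta_{\nu+1}$, forces $\|\zeta_{\nu+1}\|_{\textsf a,\textsf p}\le(C\varepsilon_\nu/\sigma)^{1/L}$. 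Performing the symplectic translation $z\mapsto z+\zeta_{\nu+1}$ (a shift in the $(w_0,\bar w_0)$-plane, symplectic for $\sqrt{-1}\,{\rm d}\bar u\wedge{\rm d}u$), setting $g_{\nu+1}(z)=\tilde g_\nu(z+\zeta_{\nu+1})-\tilde g_\nu(\zeta_{\nu+1})$ so that $\mathcal T_0^n$ stays invariant and \textbf{(A0)} is verified anew, composing with $X_F^1$ and re-expanding, one reads off $\omega_{\nu+1},\Omega_{\nu+1},g_{\nu+1}$ and, via Cauchy estimates on the smaller domain, the bound $\pmb\|X_{P_{\nu+1}}\pmb\|\le\varepsilon_\nu^{1+\kappa}$; the weighted norm with powers $r^{a-2},r^a,r^{a-1}$ and the choice of $a$ in \eqref{a} are exactly what make the shift contribution of size $\varepsilon_\nu^{1/L}$ absorbable into the new perturbation.

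For the excluded parameters, at step $\nu$ I remove from $\Pi_\nu$ the sets where the first or second Melnikov condition fails at level $\gamma_\nu,K_\nu$ for the modes not handled by the equilibrium; using that $\xi\mapsto\omega(\xi)$ is a lipeomorphism (so each sublevel set of $\langle k,\omega_\nu\rangle$, resp. $\langle k,\omega_\nu\rangle+\langle l,\Omega_\nu\rangle$, has measure $\lesssim\gamma_\nu\langle l\rangle_d|k|^{-\tau-1}$), that $\omega_\nu,\Omega_\nu$ are $O(\varepsilon_0)$-close to $\omega,\Omega$ in Lipschitz norm, that only $k$ with $|k|>K_{\nu-1}$ are newly dangerous, the asymptotics in \textbf{(A2)} and the zero-measure statement in \textbf{(A1)} together with a standard compactness argument, one obtains $|\Pi_\nu\setminus\Pi_{\nu+1}|\lesssim\gamma K_{\nu-1}^{-1}$-type bounds whose sum over $\nu$ is $O(\gamma^{\kappa'})$ for some $\kappa'>0$, and hence tends to $0$ as $\varepsilon\to0$ because $\gamma$ is a positive power of $\varepsilon$; this gives $|{\rm meas}(\Pi\setminus\Pi_\varepsilon)|\to0$. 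The quantitative bound on $|\omega_*-\omega|_\Pi+\tfrac{\gamma}{2M}|\omega_*-\omega|_\Pi^{\mathcal L}$ then follows by summing the per-step corrections $|\omega_{\nu+1}-\omega_\nu|_\Pi+\gamma_\nu|\omega_{\nu+1}-\omega_\nu|_\Pi^{\mathcal L}\lesssim\varepsilon_\nu$ (dominated by the first term $\varepsilon_0$) and inserting the chosen values of $\gamma,\varepsilon_0,K_\nu,r_\nu$ as powers of $\varepsilon$; with $M=|\omega|^{\mathcal L}_\Pi+\pmb|\Omega\pmb|^{\mathcal L}_{-\delta,\Pi}$ finite by \textbf{(A1)}–\textbf{(A2)}, the exponent $\tfrac{3m}{32\mu(m+1)(m-a)(\tau+1)}$ is precisely what emerges from this arithmetic.

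The main obstacle is the uniform control of the degeneracy across \emph{all} scales: one must guarantee that $\zeta_{\nu+1}$ never leaves $O^o$ (so the degree argument can be repeated) nor the ball $\|z\|_{\textsf a,\textsf p}<r_\nu$, that the recentred $g_{\nu+1}$ still obeys \textbf{(A0)} with constants that do not deteriorate along the iteration, and that the shift of size $\varepsilon_\nu^{1/L}$ — which is much larger than $\varepsilon_\nu$ — is nonetheless swallowed by the super-exponential gain of the scheme. Balancing the super-exponential decay of $\varepsilon_\nu$ (hence of $\varepsilon_\nu^{1/L}$) against the merely geometric decay of $s_\nu$ and $r_\nu$ is exactly where the hypotheses $m\ge L+\tfrac{\sqrt{4L^2+2L}}{2}$, the definition \eqref{a} of $a$, and $(1+\tfrac1{2m})^\mu\ge2$ are needed, and carrying these inequalities through the step estimates is the part I expect to demand the most care.
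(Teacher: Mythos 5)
Your plan follows essentially the same route as the paper: the novel ingredient, finding the normal equilibrium by combining homotopy invariance of the topological degree with the weak convexity bound on the size of the shift, then translating in $z=(w_0,\bar w_0)$ and re-verifying \textbf{(A0)} with non-deteriorating constants along the iteration, is exactly the paper's Lemma \ref{le3}; the measure estimate, convergence and the final frequency bound are organized as in Sections \ref{sec:KAM}--\ref{sec:proof}.

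There is, however, one concrete step that fails as written. You truncate $R$ at first order in $y$ and second order in $(u,\bar u)$ and keep the normal form in the classical shape $\langle\omega_{\nu+1},y\rangle+\langle w,\Omega_{\nu+1}\bar w\rangle+g_{\nu+1}(z)$, while simultaneously adopting the weighted norms with weights $r^{a-2},r^{a},r^{a-1}$ on the domain $\|z\|_{\textsf{a},\textsf{p}}<r$, $\|w\|_{\textsf{a},\textsf{p}},\|\bar w\|_{\textsf{a},\textsf{p}}<r^{a}$, $a\approx\frac{m+1}{3}$. With these weights, a term of the perturbation of intermediate order in the degenerate variables, say $p_{k0\jmath0}\,z^{\jmath}{\rm e}^{\sqrt{-1}\langle k,x\rangle}$ with $3\le|\jmath|\le m$ and $k\neq0$, left in $P-R$ only gains a factor of order $\eta^{|\jmath|-a}$ when the domain shrinks from $r$ to $\eta r$; since $a\ge 3$ for the relevant $m$ this is not a contraction at all for $|\jmath|\le a$, and in any case it is far from the per-step factor $\approx\eta^{m+1-a}$ needed to reach $\pmb\|X_{P_{\nu+1}}\pmb\|\le\varepsilon_\nu^{1+\kappa}$. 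The same applies to the averaged mixed terms $y^{\imath}z^{\jmath}$ and $y^{\imath}z^{\jmath}w\bar w$ with $2|\imath|+|\jmath|\le m$: their divisors vanish, so they cannot be transformed away, and they are not functions of $z$ alone, so they cannot be absorbed into your $\tilde g_\nu(z)$ either. This is precisely why the paper truncates at order $m$ in $(y,z)$ (the condition $2|\imath|+|\jmath|\le m$, $|\ell|\le2$), carries the extra normal-form block $f_\nu(y,z,w,\bar w)$ of (\ref{fnu}), and consequently must solve the quasi-linear homological system (\ref{he1})--(\ref{he3}) with the correction term $Q$ of (\ref{Q}), whose solvability rests on showing that the couplings coming from $\partial_z g$, $\partial_z f$ and $\partial_y f_1$ are dominated by the Melnikov divisors (Lemma \ref{le2}). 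Your ``classical'' homological equation skips this structure, yet it is exactly where the degeneracy bites beyond the translation trick; without the order-$m$ truncation and the $f_\nu$-block, the hypotheses $m\ge L+\frac{\sqrt{4L^2+2L}}{2}$ and the definition (\ref{a}) of $a$ that you invoke at the end have nothing to act on, and the per-step contraction you assert cannot be established.
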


\begin{remark}\label{remark1}
{
We mention that, assumption (\textbf{A0}) allows $d$-dimensional degeneracy. Consider the infinite dimensional Hamiltonian (\ref{eq1}) with
the normal term $g(z)$ defined as follow
\begin{align}\label{gzlizi}
g(z)=g(w_0,\bar w_0):=\frac{1}{2p}\sum_{i=1}^{b}|w_{0i}|^{2p}+\frac{1}{2q}\sum_{i=1}^{b}|\bar w_{0i}|^{2q},
\end{align}
where $w_0=(w_{0,1},\cdots,w_{0,b}),\bar w_0=(\bar w_{0,1},\cdots,\bar w_{0,b})$, $p,q>1$.
The function in form of (\ref{gzlizi}) is the typical and common degenerate normal term which satisfies assumption (\textbf{A0}).
}

\end{remark}


\begin{remark}
As we mentioned above, the weak convexity condition in \textbf{(A0)} is indispensable. See below for a counter example:
\begin{proposition}\label{example}
Consider the infinite dimensional Hamiltonian (\ref{eq1}), for $b=1$,  with
$$\nabla g(z)=(\nabla g_1(w_0),\nabla g_2(\bar w_0)),~~~\varepsilon P=P_0(\varepsilon)\bar w_0,$$
where
\begin{align*}
\nabla g_1(w_0)&=\left\{\begin{array}{lll}
-(-w_0-1)^{\sigma},~~~&w_0\in(-2,-1),\\
0,~~~&w_0\in[-1,1],\\
(w_0-1)^{\sigma},~~~&w_0\in(1,2),
                           \end{array}\right.\\
\nabla g_2(\bar w_0)&=\bar w_0, ~~~\bar w_0\in(-2,2),
\end{align*}
$\sigma$ is a positive integer, and
$$P_0(\varepsilon)=\left\{\begin{array}{lll}
                              ~~~~~~~~~0,&&\varepsilon=0, \\
                              \varepsilon^\ell\sin\frac{1}{\varepsilon},&&\varepsilon\neq0,\,\ell\in \mathbb{Z}^+\setminus\{0\}.
                            \end{array}\right.$$
Then, the weak convexity condition in \textbf{(A0)} fails. Moreover, there is no low dimensional invariant tori can be preserved for any  small enough perturbation.
\end{proposition}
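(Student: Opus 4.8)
The claim has two parts: (i) the weak convexity inequality in \textbf{(A0)} fails for the prescribed $g$, and (ii) no lower-dimensional invariant torus survives for the perturbation $\varepsilon P = P_0(\varepsilon)\bar w_0$, however small $\varepsilon$ is. Part (i) is a direct computation. With $b=1$ we have $z=(w_0,\bar w_0)$ and $\nabla g(z)=(\nabla g_1(w_0),\nabla g_2(\bar w_0))$. Taking $z_*=\zeta_0=0$, the flat piece $w_0\in[-1,1]$ gives $\nabla g_1(w_0)-\nabla g_1(0)=0$ while $\|z-z_*\|_{\textsf a,\textsf p}=|w_0|$ can be taken arbitrarily close to $1$; hence no inequality of the form $\|\nabla g(z)-\nabla g(z_*)\|_{\textsf a,\bar{\textsf p}}\ge\sigma\|z-z_*\|_{\textsf a,\textsf p}^L$ can hold on $O$ for any $\sigma>0$, $L\ge 2$. (One should also note $\deg(\nabla g,O^o,0)$ is still nonzero here — indeed $\nabla g_2$ is the identity and $\nabla g_1$ is a degree-one-like odd map through $0$ on the relevant interval — so it is precisely the convexity half of \textbf{(A0)}, not the topological-degree half, that is being violated.)

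For part (ii), the plan is to write down the full Hamiltonian and its equations of motion and show directly that it admits no $n$-torus of the required type for $\varepsilon\neq 0$. The Hamiltonian is
\begin{align*}
H=\langle\omega(\xi),y\rangle+\langle w,\Omega(\xi)\bar w\rangle+g_1(w_0)+g_2(\bar w_0)+P_0(\varepsilon)\bar w_0,
\end{align*}
where $g_2(\bar w_0)=\tfrac12\bar w_0^2$ so that $\nabla g_2(\bar w_0)=\bar w_0$. An invariant rotational $n$-torus near $\mathcal T_0^n$ must, in the degenerate $z=(w_0,\bar w_0)$ directions, sit at an equilibrium of the averaged normal system; since $H$ is already independent of $x$ in these variables, this is literally the condition that $(w_0,\bar w_0)$ be a critical point of the restricted Hamiltonian, i.e.
\begin{align*}
\partial_{\bar w_0}H = \nabla g_2(\bar w_0)+P_0(\varepsilon)=\bar w_0+P_0(\varepsilon)=0,\qquad
\partial_{w_0}H=\nabla g_1(w_0)=0.
\end{align*}
The first equation forces $\bar w_0=-P_0(\varepsilon)=-\varepsilon^{\ell}\sin\tfrac1\varepsilon$. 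The second forces $w_0\in[-1,1]$ (the only zero set of $\nabla g_1$). But then the symplectic structure $\sqrt{-1}\,{\rm d}\bar u\wedge{\rm d}u$ couples $w_0$ and $\bar w_0$: the equation of motion $\dot w_0 = \sqrt{-1}\,\partial_{\bar w_0}H = \sqrt{-1}(\bar w_0+P_0(\varepsilon))$ is consistent with $\bar w_0=-P_0(\varepsilon)$, but $\dot{\bar w}_0=-\sqrt{-1}\,\partial_{w_0}H=-\sqrt{-1}\,\nabla g_1(w_0)=0$ only on $w_0\in[-1,1]$, so a genuine equilibrium would require $w_0\in[-1,1]$ \emph{and} $\bar w_0=-P_0(\varepsilon)$ simultaneously. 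The key point is that this is not enough to produce a torus of the \emph{prescribed} size: the complex neighborhood $D(s,r)$ in which persistence is asserted shrinks with $\varepsilon$ (through $\gamma$ and the radii in Theorem \ref{th1}), whereas $\bar w_0=-\varepsilon^{\ell}\sin\tfrac1\varepsilon$ oscillates in sign and does not go to $0$ faster than any power along the sequence $\varepsilon_k=1/(\tfrac{\pi}{2}+k\pi)$; thus for that sequence the forced equilibrium value $|\bar w_0|=\varepsilon_k^{\ell}$ is comparable to, or larger than, the admissible normal radius, so the candidate torus leaves the region where the KAM scheme (and the conclusion of Theorem \ref{th1}) is valid, and no torus $\mathbb T^n\times\{y=0\}\times\{z=z(\varepsilon)\}$ persisting as $\varepsilon\to 0$ can exist. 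Equivalently, and more robustly, one argues that any such persistent family $\xi\mapsto$ torus would have to depend continuously (indeed Lipschitz, by the conclusion of Theorem \ref{th1}) on $\varepsilon$ at $\varepsilon=0$ with limit $\mathcal T_0^n$, forcing $\bar w_0(\varepsilon)\to 0$ with a rate — which $-\varepsilon^{\ell}\sin\tfrac1\varepsilon$ satisfies in norm but whose non-differentiability at $\varepsilon=0$ obstructs the Lipschitz/analytic dependence in the parameter that the theorem delivers; since the weak convexity condition was exactly what pinned the perturbed normal equilibrium to a controlled size, its failure is what opens the door to this pathology.

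The main obstacle is to make the non-persistence rigorous rather than heuristic: one must state precisely in what sense a torus "is preserved" (same topology, $O(\varepsilon)$-close to $\mathcal T_0^n$ in $\mathcal P^{\textsf a,\bar{\textsf p}}$, with Lipschitz dependence on $\xi$ and on $\varepsilon$), and then show the oscillatory factor $\sin\tfrac1\varepsilon$ is incompatible with any one of these requirements. I would carry it out by fixing the subsequence $\varepsilon_k\to 0$ on which $|P_0(\varepsilon_k)|=\varepsilon_k^{\ell}$ attains its maximal order and $|P_0(\varepsilon_k')|=0$ on an interlacing subsequence $\varepsilon_k'=1/(k\pi)$, so that the unique normal equilibrium $\bar w_0=-P_0(\varepsilon)$ jumps between $0$ and order-$\varepsilon^{\ell}$ values infinitely often; a continuous-in-$\varepsilon$ family of invariant tori through $\mathcal T_0^n$ at $\varepsilon=0$ cannot track this, and along $\varepsilon_k$ the torus — if it existed — would have to carry normal amplitude bounded below by $c\,\varepsilon_k^{\ell}$, contradicting the claim that it is $o(\cdot)$-close in a neighborhood of shrinking radius dictated by the scheme. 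This yields the stated conclusion that no lower-dimensional invariant torus can be preserved for all sufficiently small $\varepsilon$.
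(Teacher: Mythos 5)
Your part (i) is fine and coincides with the paper's computation: the flat piece of $\nabla g_1$ on $[-1,1]$ kills any inequality $\|\nabla g(z)-\nabla g(z_*)\|\ge\sigma\|z-z_*\|^L$, while the degree half of \textbf{(A0)} survives (the paper checks this via Borsuk's theorem for the odd map $\nabla g$; your parenthetical remark covers the same point). The genuine gap is in part (ii), and it is a missing idea, not just missing rigor. The whole point of the counterexample, and of the appendix proof, is that the perturbation must be balanced against the \emph{flat} (degenerate) nonlinearity: the forced equilibrium condition in that direction reads $\nabla g_1(\cdot)+P_0(\varepsilon)=0$, and since $\nabla g_1$ vanishes identically on $[-1,1]$ and equals $\pm(|\cdot|-1)^{\sigma}$ just outside, any solution with $P_0(\varepsilon)\neq0$ lies in $(-2,-1)\cup(1,2)$, i.e.\ at distance at least $1$ from the unperturbed torus, and it jumps between these two intervals infinitely often as $\varepsilon\to0_+$ because $\sin\frac{1}{\varepsilon}$ changes sign. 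That $O(1)$ escape and oscillation of the normal equilibrium is what destroys persistence, and it is exactly the step where the weak convexity of \textbf{(A0)} would otherwise confine the shifted equilibrium $\zeta_+$ to a small ball in Lemma \ref{le3}. (The statement of Proposition \ref{example} as printed pairs things the other way; for the appendix computation, which concludes that the forced value ``alternately appears on $(-2,-1)$ and $(1,2)$'', the flat function has to face the perturbation, e.g.\ with the roles of $\nabla g_1,\nabla g_2$ interchanged or with $\varepsilon P=P_0(\varepsilon)w_0$.)

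Your write-up instead balances $P_0$ against the non-degenerate direction $\nabla g_2=\mathrm{id}$, and you correctly obtain the equilibrium $\bar w_0=-P_0(\varepsilon)$, $w_0\in[-1,1]$. But with that pairing the set $\mathbb{T}^n\times\{y=0\}\times\{w_0=0,\ \bar w_0=-P_0(\varepsilon)\}\times\{w=\bar w=0\}$ is an invariant rotational torus with frequency $\omega$, lying within $\varepsilon^{\ell}$ of $\mathcal{T}_0^n$ and depending continuously on $\varepsilon$; so the non-persistence you are trying to prove is not available along that route, and your substitute argument cannot rescue it. Saying that the equilibrium of size $\varepsilon^{\ell}$ is ``comparable to the admissible normal radius'' of the KAM scheme, or that $-\varepsilon^{\ell}\sin\frac{1}{\varepsilon}$ is not Lipschitz at $\varepsilon=0$, does not exclude invariant tori: invariance is a property of the vector field, not of the domains of an iteration (and in fact $\varepsilon^{\ell}$ is far smaller than the radii $r_0\sim s\gamma_0/(K_1+1)^{\tau+1}$, while the Lipschitz dependence in Theorem \ref{th1} is in $\xi$, not in $\varepsilon$). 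To prove the proposition you need the paper's mechanism: the degenerate direction sees the forcing, the equilibrium equation has no solution within distance $1$ of the origin whenever $P_0(\varepsilon)\neq0$, and the solution set flips between the two far components infinitely often as $\varepsilon\to0_+$, so no lower-dimensional torus near $\mathcal{T}_0^n$ (nor any continuous family of them) can be preserved.
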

\end{remark}

The paper is organized as follows. In Section \ref{sec:result}, we state our main result, that is, Theorem \ref{th1}.
We will describe the quasi-linear iterative scheme, show the details of construction and estimates for one cycle of KAM steps.
Note that,  we prove the average equation can be solved by a equilibrium under assumption \textbf{(A0)} in subsection \ref{326}.
In Section \ref{sec:proof}, we complete the proof of Theorem \ref{th1} by deriving an iteration lemma and show the convergence of KAM iterations.
We show an example of Hamiltonian lattice equation as an application of  Theorem \ref{th1} in Section~\ref{sec:example}.
At last, we explain the indispensability of the weak convexity condition by counter example in the Appendix.

\section{KAM Step}\label{sec:KAM}
In this section, we will show the detailed construction and estimates for one cycle of KAM steps, which is essential to study the KAM theory, see \cite{chow,li,poschel1,poschel2}.

\subsection{Description of the 0-th KAM step.}


{{Recall the integer $m$ satisfying
\begin{align}\label{m}
m\geq L+\frac{\sqrt{4L^2+2L}}{2},
\end{align}
where $L\geq2$ is defined as in \textbf{(A0)}.
Denote $\Xi=8\mu(m+1)(m-a)(\tau+1)$, where $\mu$ is a fixed positive integer such that $(1+\frac{1}{2m})^\mu\geq2$. Then
\begin{equation}\label{gamma}\gamma=\varepsilon^{\frac{1}{4(2b)^{m+2}\Xi}}.\end{equation}}}
We consider $(\ref{eq1})$ and define the following $0$-th KAM step parameters:
\begin{align}\label{s0}
s_0&=s,~~\gamma_0=\gamma,
~~{\rho_0<\frac{s_0}{6}},~~\eta_0=\gamma_0^{2(2b)^{m+2}}\varepsilon^{\frac{1}{\Xi}},~~~r_0=\frac{s\gamma_0}{(K_1+1)^{\tau+1}},\\\label{h0y}
\omega_0(\xi)&=\omega(\xi),~~~\Omega_0(\xi)=\Omega(\xi),~~~g_0(z)=g(z),~~~f_0=0,~~~M_0=M,\\
D(s_0,r_0)&=\{(x,y,z,w,\bar{w}):|\textrm{Im}x|<s_0,|y|<r_0^2,||z||_{a,p}<r_0,||w||_{a,p}<r_0^a,||\bar{w}||_{a,p}<r_0^a\},
\end{align}
where $0<r_0,\gamma_0\leq 1$, $0\ll s\leq1$, $0<\eta_0\leq\frac{1}{16}$,
\begin{align}\label{K1}
K_1=([\log\frac{1}{\eta_0^{m+1}}]+1)^{3\mu}.
\end{align}
Therefore, we have that
\begin{align*}
H_0&=: H(x,y,u,\bar{u},\xi)=N_0+P_0,\\
N_0&=:\langle\omega_0(\xi), y\rangle+\langle w, \Omega_0(\xi)\bar{w}\rangle+g_0(z,\xi)+f_0,\\
P_0&=: \varepsilon P(x,y,u,\bar{u},\xi),
\end{align*}
with
$$\pmb\|X_{P_0}\pmb\|^{\lambda_0}_{r_0,D(s_0,r_0)}\leq \gamma_0^{2(2b)^{m+2}}r_0^{m-a}\eta_0^m,$$
where $0<\lambda_0<\frac{\gamma_0^{(2b)^{m+2}}}{M_0}$.

We first prove a crucial estimate.
\begin{lemma}
\begin{equation}\label{P0}
\pmb\|X_{P_0}\pmb\|^{\lambda_0}_{r_0,D(s_0,r_0)}\leq \gamma_0^{2(2b)^{m+2}}r_0^{m-a}\eta_0^m.
\end{equation}
\end{lemma}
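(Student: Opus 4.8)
The plan is to unwind the definitions of the $0$-th step parameters in (\ref{s0})--(\ref{K1}) and reduce the assertion to an elementary comparison of powers of $\varepsilon$; no small-divisor or iterative estimate enters here. First, since $P_0=\varepsilon P$ and $\varepsilon$ is a scalar independent of the phase variables and of $\xi$, we have $X_{P_0}=\varepsilon X_P$, hence $\pmb\|X_{P_0}\pmb\|_{r_0,D(s_0,r_0)}=\varepsilon\,\pmb\|X_P\pmb\|_{r_0,D(s_0,r_0)}$ and, taking differences in $\xi$, $\pmb\|X_{P_0}\pmb\|^{\mathcal L}_{r_0,D(s_0,r_0)}=\varepsilon\,\pmb\|X_P\pmb\|^{\mathcal L}_{r_0,D(s_0,r_0)}$; therefore $\pmb\|X_{P_0}\pmb\|^{\lambda_0}_{r_0,D(s_0,r_0)}=\varepsilon\,\pmb\|X_P\pmb\|^{\lambda_0}_{r_0,D(s_0,r_0)}$. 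So it suffices to estimate $\pmb\|X_P\pmb\|^{\lambda_0}_{r_0,D(s_0,r_0)}$ and check that $\varepsilon$ times this bound does not exceed $\gamma_0^{2(2b)^{m+2}}r_0^{m-a}\eta_0^m$; multiplying through by $r_0^a$, this is the inequality $\varepsilon\,\pmb\|X_P\pmb\|^{\lambda_0}_{r_0,D(s_0,r_0)}r_0^a\le\gamma_0^{2(2b)^{m+2}}\eta_0^m r_0^m$.

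Second, I would bound $\pmb\|X_P\pmb\|^{\lambda_0}_{r_0,D(s_0,r_0)}$ by a fixed negative power of $r_0$. By \textbf{(A3)} the map $X_P=(P_y,-P_x,\sqrt{-1}P_{\bar u},-\sqrt{-1}P_u)$ is real analytic into $\mathcal P^{\textsf{a},\bar{\textsf{p}}}$ and Lipschitz in $\xi$ on a fixed complex neighbourhood $D(s,\hat r)$ of $\mathcal T^n_0$ with $0<\hat r\le1$; consequently the supremum over $D(s,\hat r)$ of the sum of the moduli and $\ell^{\textsf{a},\bar{\textsf{p}}}$-norms of the components of $X_P$ is a finite constant $C$, and the same supremum applied to the $\xi$-difference quotients gives a finite constant $C'$. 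Since $r_0=s\gamma_0(K_1+1)^{-(\tau+1)}\le\gamma_0\to0$ as $\varepsilon\to0$, for $\varepsilon$ small we have $r_0\le\hat r$ and $D(s_0,r_0)\subseteq D(s,\hat r)$ (recall $s_0=s$). Each summand of the weighted norm $\pmb\|\cdot\pmb\|_r$ carries a factor $r^{-j}$ with $0\le j\le a$, so for $r_0\le1$ every summand of $\pmb\|X_P\pmb\|_{r_0,D(s_0,r_0)}$ is at most $r_0^{-a}$ times the corresponding supremum over $D(s,\hat r)$; hence $\pmb\|X_P\pmb\|_{r_0,D(s_0,r_0)}\le C r_0^{-a}$ and likewise $\pmb\|X_P\pmb\|^{\mathcal L}_{r_0,D(s_0,r_0)}\le C' r_0^{-a}$. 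Since $0<\lambda_0<\gamma_0^{(2b)^{m+2}}/M_0<1$ for $\varepsilon$ small, this yields $\pmb\|X_P\pmb\|^{\lambda_0}_{r_0,D(s_0,r_0)}\le\bar C r_0^{-a}$ with $\bar C:=C+C'$ an $\varepsilon$-independent constant.

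Third comes the bookkeeping of exponents: it remains to verify $\varepsilon\bar C\le\gamma_0^{2(2b)^{m+2}}\eta_0^m r_0^m$. From (\ref{gamma}), $\gamma_0=\varepsilon^{1/(4(2b)^{m+2}\Xi)}$, hence $\gamma_0^{2(2b)^{m+2}}=\varepsilon^{1/(2\Xi)}$, and then $\eta_0=\gamma_0^{2(2b)^{m+2}}\varepsilon^{1/\Xi}=\varepsilon^{3/(2\Xi)}$ and $\gamma_0^m=\varepsilon^{m/(4(2b)^{m+2}\Xi)}$. Moreover $\log\eta_0^{-(m+1)}=\tfrac{3(m+1)}{2\Xi}\log\tfrac1\varepsilon$, so $K_1+1\le c_1(\log\tfrac1\varepsilon)^{3\mu}$ for $\varepsilon$ small, whence $r_0^m\ge s^m\gamma_0^m(K_1+1)^{-m(\tau+1)}\ge c_2^{-1}s^m\varepsilon^{m/(4(2b)^{m+2}\Xi)}(\log\tfrac1\varepsilon)^{-3\mu m(\tau+1)}$. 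Collecting powers, the right-hand side is at least $c_2^{-1}s^m(\log\tfrac1\varepsilon)^{-3\mu m(\tau+1)}\varepsilon^{\theta}$ with $\theta:=\tfrac{1+3m}{2\Xi}+\tfrac{m}{4(2b)^{m+2}\Xi}$, so the claim holds once $\varepsilon^{1-\theta}\ge c_2\bar C s^{-m}(\log\tfrac1\varepsilon)^{3\mu m(\tau+1)}$. Here the choice of $m$ in (\ref{m}) is exactly what gives the slack: $m\ge L+\tfrac12\sqrt{4L^2+2L}$ with $L\ge2$ forces $m-a\ge2$, and since $\mu,\tau+1\ge1$ and $b\ge1$ we get $\Xi=8\mu(m+1)(m-a)(\tau+1)\ge16(m+1)$, hence $\tfrac{1+3m}{2\Xi}\le\tfrac3{32}$ while the second term of $\theta$ is negligible, so $\theta<\tfrac1{10}$. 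As $\varepsilon^{1-\theta}$ dominates any fixed power of $\log\tfrac1\varepsilon$ when $\varepsilon\to0$, the inequality holds for all $0<\varepsilon<\varepsilon_0$ provided $\varepsilon_0$ is chosen small enough (depending on $P,s,M,m,\mu,\tau,b$, but not on $\xi$).

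The one step that requires a little care — though it is mild — is the passage from the fixed analyticity domain $D(s,\hat r)$ of $X_P$ to the shrinking domain $D(s_0,r_0)$: one must make sure this inflates the weighted norm by no more than the factor $r_0^{-a}$. Once that is granted, the rest is routine; the smallness $\varepsilon$ together with the factor $r_0^{m-a}\eta_0^m$ on the right-hand side — which is minuscule precisely because $m$ is taken large — supplies all the room that is needed.
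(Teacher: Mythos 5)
Your argument is correct in substance and reaches the same conclusion, but by a somewhat more self-contained route than the paper's. The paper also begins from $X_{P_0}=\varepsilon X_P$ and performs the same kind of power counting in $\varepsilon$ (its estimate (\ref{gre})), but it never bounds $\pmb\|X_P\pmb\|^{\lambda_0}_{r_0,D(s_0,r_0)}$ intrinsically: it simply postulates the smallness condition (\ref{vare0}) on $\varepsilon_0$, in which the constrained quantity itself depends on $\varepsilon$ through $r_0=s\gamma_0(K_1+1)^{-(\tau+1)}$. Your second step --- using (\textbf{A3}) to bound the unweighted components of $X_P$ (and their Lipschitz quotients in $\xi$) on a fixed domain $D(s,\hat r)$, and then paying at most a factor $r_0^{-a}$ to pass to the weighted norm on the shrinking domain $D(s_0,r_0)$ --- supplies exactly the information that makes such a condition on $\varepsilon_0$ meaningful, and your exponent bookkeeping ($\gamma_0^{2(2b)^{m+2}}=\varepsilon^{1/(2\Xi)}$, $\eta_0=\varepsilon^{3/(2\Xi)}$, $m-a\ge2$, $\Xi\ge16(m+1)$, hence $\theta<\tfrac3{32}+\cdots<1$) plays the role of the paper's arithmetic in (\ref{gre})--(\ref{P}). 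What your route buys is an $\varepsilon_0$ depending only on $P$, $s$, $m$, $\mu$, $\tau$, $b$, $M$, with the growth of the weighted norm as $r_0\to0$ made explicit; what the paper's buys is brevity, at the cost of a hypothesis whose left-hand side varies with $\varepsilon$.

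One slip to correct: your displayed sufficient condition should not read $\varepsilon^{1-\theta}\ge c_2\bar C s^{-m}(\log\tfrac1\varepsilon)^{3\mu m(\tau+1)}$ --- as written it fails for small $\varepsilon$, since the left side tends to $0$ and the right side to $\infty$. Dividing $\varepsilon\bar C\le c_2^{-1}s^m(\log\tfrac1\varepsilon)^{-3\mu m(\tau+1)}\varepsilon^{\theta}$ by $\varepsilon^{\theta}$, what you need (and what your closing sentence actually argues) is $c_2\bar C s^{-m}\,\varepsilon^{1-\theta}(\log\tfrac1\varepsilon)^{3\mu m(\tau+1)}\le1$, which does hold for all sufficiently small $\varepsilon$ because $1-\theta>0$. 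With that inequality restated, the proof is complete; the Lipschitz part goes through as you say since $\lambda_0<\gamma_0^{(2b)^{m+2}}/M_0\le1$ for $\varepsilon$ small.
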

\begin{proof}
{The proof is standard, but we give the explicit process due to the presence of the degenerate order $m$.}
Using the fact that $\gamma_0=\varepsilon^{\frac{1}{4(2b)^{m+2}\Xi}}$, $\eta_0=\gamma_0^{2(2b)^{m+2}}\varepsilon^{\frac{1}{\Xi}}$ and $[\log\frac{1}{\eta_0^{m+1}}]+1<\frac{1}{\eta_0^{m+1}}$, we have
\begin{align}\label{gre}
\gamma_0^{2(2b)^{m+2}}r_0^{m-a}\eta_0^m&>\frac{\gamma_0^{m-a+2(2b)^{m+2}}\eta_0^{3\mu(m+1)(m-a)(\tau+1)+m}}{2^{(\tau+1)(m-a)}}\\
&>\frac{\gamma_0^{m-a+2(2b)^{m+2}(1+m+3\mu(m+1)(m-a)(\tau+1))}\varepsilon^{\frac{3\mu(m+1)(m-a)(\tau+1)+m}{\Xi}}}{2^{(\tau+1)(m-a)}}\notag\\
&>\frac{\varepsilon^{\frac{1}{32\mu(2b)^{m+2}(m+1)(\tau+1)}+\frac{1}{16\mu(m-a)(\tau+1)}+\frac{3}{16}+\frac{3}{8}+\frac{1}{8\mu(m-a)(\tau+1)}}}{2^{(\tau+1)(m-a)}}\notag\\
&>\frac{\varepsilon^{\frac{1}{32\mu(2b)^{m+2}(m+1)(\tau+1)}+\frac{9}{16}}}{2^{(\tau+1)(m-a)}}.\notag
\end{align}
Moreover, let $\varepsilon_0>0$ be small enough so that
\begin{equation}\label{vare0}
\varepsilon_0^{\frac{1}{16}-\frac{1}{32\mu(2b)^{m+2}(m+1)(\tau+1)}}\pmb\|X_P\pmb\|^{\lambda_0}_{r_0,D(s_0,r_0)}\frac{2^{(\tau+1)(m-a)}}{s^{m-a}}\leq 1
\end{equation}
with $0<\lambda_0<\frac{\gamma_0^{(2b)^{m+2}}}{M_0}$
and for any $0<\varepsilon<\varepsilon_0$,
\begin{equation*}
\varepsilon^{\frac{1}{16}-\frac{1}{32\mu(2b)^{m+2}(m+1)(\tau+1)}}\pmb\|X_P\pmb\|^{\lambda_0}_{r_0,D(s_0,r_0)}\frac{2^{(\tau+1)(m-a)}}{s^{m-a}}\leq 1,
\end{equation*}
i.e.,
\begin{align}\label{P}
\varepsilon^{\frac{1}{16}}\pmb\|X_P\pmb\|^{\lambda_0}_{r_0,D(s_0,r_0)}\leq \frac{s^{m-a}\varepsilon^{\frac{1}{32\mu(2b)^{m+2}(m+1)(\tau+1)}}}{2^{(\tau+1)(m-a)}}.
\end{align}
Then by (\ref{gre}) and (\ref{P}),
\begin{align*}
\pmb\|X_{P_0}\pmb\|^{\lambda_0}_{r_0,D(s_0,r_0)}=\varepsilon^{\frac{9}{16}} \varepsilon^{\frac{7}{16}}\pmb\|X_P\pmb\|^{\lambda_0}_{r_0,D(s_0,r_0)}\leq \varepsilon^{\frac{9}{16}}\varepsilon^{\frac{6}{16}}\frac{s^{m-a}\varepsilon^{\frac{1}{32\mu(2b)^{m+2}(m+1)(\tau+1)}}}{2^{(\tau+1)(m-a)}}\leq\gamma_0^{2(2b)^{m+2}}r_0^{m-a}\eta_0^m,
\end{align*}
which implies (\ref{P0}).

The proof is complete.
\end{proof}

\subsection{Induction from  $\nu$-th KAM step}

\subsubsection{Description of the $\nu$-th KAM step}
We now define the $\nu$-th KAM step parameters:
\begin{align}\label{snu}
r_\nu&=\eta_{\nu-1}r_{\nu-1},~\eta_{\nu}=\eta_{\nu-1}^{1+\frac{1}{2m}},~{s_\nu=s_{\nu-1}-6\rho_{\nu-1}},~\rho_{\nu-1}=\frac{\rho_0}{2^{\nu-1}},\\\label{gammanu}
\gamma_\nu&=\frac{\gamma_{\nu-1}}{2}+\frac{\gamma_0}{4},~M_\nu=M_0(2-\frac{1}{2^\nu}).
\end{align}
Suppose that at $\nu$-th step, we have arrived at the following real analytic Hamiltonian:
\begin{align}\label{eq2}
H_\nu&=N_\nu+P_\nu,
\end{align}
with
{\begin{align}\label{Nnu}
N_\nu&={e_\nu(\xi)+\langle\omega_\nu(\xi),y\rangle+\langle w,\Omega_\nu(\xi)\bar w\rangle+g_\nu(z,\xi)+f_\nu(y,z,w,\bar w,\xi),}\\
g_\nu(z,\xi)&=g(z)+\sum_{j=0}^{\nu-1}\gamma_j^{2(2b)^{m+2}}r_j^{m-1}\eta_j^mO(\pmb\|z\pmb\|_{\textsf{a},\textsf{p}}^2),\\\label{fnu}
f_\nu(y,z,w,\bar w,\xi)&=\sum_{4\leq2|\imath'|\leq m}f_{\imath000} y^{\imath'}+\sum_{2|\imath'|+|\jmath'|\leq m,1\leq|\imath'|,|\jmath'|}f_{\imath'\jmath'00} y^{\imath'} z^{\jmath'}+\sum_{0<2|\imath'|+|\jmath'|\leq m}f_{\imath'\jmath'11} y^{\imath'} z^{\jmath'} w\bar w,
\end{align}}
defined on $D(s_\nu,r_\nu)$
and
\begin{equation}\label{XP}
\pmb\| X_{P_\nu}\pmb\|^{\lambda_\nu}_{r_\nu,D(s_\nu,r_\nu)}\leq\gamma_\nu^{2(2b)^{m+2}}r_\nu^{m-a}\eta_\nu^m, \end{equation}
with $0<\lambda_\nu<\frac{\gamma_\nu^{(2b)^{m+2}}}{M_\nu}$.

{Except for additional instructions, we will omit the index for all quantities of the present KAM step (at $\nu$-th step), use $+$ to index all quantities (Hamiltonian, domains, normal form, perturbation, transformation, etc.) in the next KAM step (at $(\nu+1)$-th step), and use $-$ to index all quantities in the previous KAM step (at $(\nu-1)$-th step).}
To simplify the notations, we will not specify the dependence of $P$, $P_+$ etc. All the constants {{$c_1$-$c_3$}} below are positive and independent of the iteration process, and we will also use $c$ to denote any intermediate positive constant which is independent of the iteration process.

Define
\begin{align*}
\eta_+&=\eta^{1+\frac{1}{2m}},\\
r_{+}&=\eta r,\\
\rho_{+}&=\frac{\rho}{2},\\
s_{+}&=s-6\rho,\\
\sigma_+&=\frac{\sigma}{2}+\frac{\sigma_0}{4},\\
\gamma_+&=\frac{\gamma}{2}+\frac{\gamma_0}{4},\\
M_\nu&=M_0(2-\frac{1}{2^\nu}),\\
K_{+}&=([\log(\frac{1}{\eta^{m+1}})]+1)^{3\mu},\\
D_{+}&=D(s_{+}, r_{+}),\\
{\Pi_+}&=\{\xi\in\Pi:|\langle k,\omega(\xi)\rangle+\langle\ell,\Omega(\xi)\rangle|\geq\frac{\gamma\langle \ell\rangle_d}{(1+|k|)^\tau}, |k|\leq K_+, |\ell|\leq2, |k|+|\ell|\neq0 \}.\\
\end{align*}

\subsubsection{Construct a symplectic transformation}

We will construct a symplectic transformation $\Phi_{+}:D(s_{+},r_{+})\rightarrow D(s,r)$
such that it transforms the Hamiltonian ($\ref{eq2}$) into the Hamiltonian of the next KAM cycle (at ($\nu$+1)-th step), i.e.,
\begin{equation}\label{H+}
H_{+}=H\circ\Phi_{+}=N_{+}+P_{+},
\end{equation}
where $N_{+}$ and $P_{+}$ have similar properties as $N$ and $P$ respectively on $D(s_{+},r_{+})$.


Next, we show the detailed construction of $\Phi_+$ and the estimate of $P_+$.

\subsubsection{Truncation}
Consider the Taylor-Fourier series of $P$:
\begin{equation*}
P=\sum_{k\in \mathbb{Z}^n,~\imath,\jmath,\ell_1,\ell_2\in \mathbb{Z}_+^n,\ell=(\ell_1,\ell_2)}p_{k\imath\jmath\ell}y^{\imath}z^{\jmath} w^{\ell_1}\bar{w}^{\ell_2} {\rm e}^{\sqrt{-1}\langle k,x\rangle},
\end{equation*}
and let $R$ be the truncation of $P$ of the form
\begin{align*}
R&=\sum_{|k|\leq K_+,~2|\imath|+|\jmath|\leq m,~|\ell|=|\ell_1|+|\ell_2|\leq2}p_{k\imath\jmath\ell}y^{\imath}z^{\jmath} w^{\ell_1}\bar{w}^{\ell_2} {\rm e}^{\sqrt{-1}\langle k,x\rangle},\\
[R]&=\sum_{2|\imath|+|\jmath|\leq m,~|\ell|=|\ell_1|+|\ell_2|\leq2}p_{0\imath\jmath\ell}y^{\imath}z^{\jmath} w^{\ell_1}\bar{w}^{\ell_2}.
\end{align*}
Next we will prove that the norm of $X_P-X_R$ is much smaller than the norm of $X_P$ by selecting truncation appropriately, see the below lemma.

\begin{lemma}\label{le1}
Assume that
$$\textbf{\textsc{(H1)}}~ K_+^n{\rm e}^{-K_+\rho} <\eta^{m+1}.$$
Then there is a constant $c_1$ such that
\begin{align}\label{XP-R}
\pmb\|X_P-X_R\pmb\|^*_{\eta r,D(s-\rho,8\eta r)}&\leq c_1\eta^{m-a+1} \pmb\|X_P\pmb\|^*_{r,D(s,r)},\\\label{XR}
\pmb\|X_R\pmb\|^*_{r,D(s-\rho,8\eta r)}&\leq c_1 \pmb\|X_P\pmb\|^*_{r,D(s,r)}.
\end{align}
\end{lemma}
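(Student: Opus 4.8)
The plan is to split the difference $P-R$ according to the two sources of truncation and estimate each part separately, then bound $X_R$ by $X_P$ directly. Write $P - R = (P - P^{K_+}) + (P^{K_+} - R)$, where $P^{K_+}$ is the Fourier truncation at order $K_+$ in $x$ (keeping all monomials in $y,z,w,\bar w$) and $R$ is obtained from $P^{K_+}$ by further dropping all monomials with $2|\imath|+|\jmath|\ge m+1$ or $|\ell|\ge 3$. For the first piece, the analyticity of $P$ in $|\mathrm{Im}\,x|<s$ gives exponential decay of the Fourier coefficients, $\|p_{k\cdots}\|\lesssim e^{-|k|s}$; summing the tail $|k|>K_+$ over $\mathbb{Z}^n$ produces a factor $\lesssim K_+^n e^{-K_+\rho}$ after losing a strip of width $\rho$, and hypothesis \textbf{(H1)} converts this into the claimed $\eta^{m+1}$. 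For the second piece, on the smaller polydisc $D(s-\rho,8\eta r)$ the phase-space variables are scaled down by a factor $8\eta$ relative to $D(s,r)$ — more precisely $|y|<64\eta^2 r^2$, $\|z\|<8\eta r$, etc. — so each discarded monomial $y^\imath z^\jmath w^{\ell_1}\bar w^{\ell_2}$ of total weighted degree $2|\imath|+|\jmath|+|\ell|$ exceeding the truncation order gains a factor $(8\eta)^{2|\imath|+|\jmath|+|\ell|-m}$, hence at least $(8\eta)^1$; combined with the extra powers of $r$ built into the weighted norm $\pmb\|\cdot\pmb\|_r$ this yields the $\eta^{m-a+1}$ gain. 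Here I will use the standard generalized Cauchy estimate that passing from a function bound to a vector-field bound on a polydisc of radius shrunk by a definite fraction costs only a constant; since we shrink $8\eta r \to \eta r$ (a fixed factor $8$) and $s\to s-\rho$, that is exactly what is available.

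Concretely, I would organize the computation as follows. First, record the elementary inequality that for any monomial index with $2|\imath|+|\jmath|+|\ell|=:\ell^*$, evaluating $\partial(y^\imath z^\jmath w^{\ell_1}\bar w^{\ell_2})$ and weighting by the norm $\pmb\|\cdot\pmb\|_r$ on $D(s-\rho,8\eta r)$ versus $D(s,r)$ produces the scaling factor $\eta^{\ell^*-a}$ up to an absolute constant depending only on $a$ and $n$ (this handles the $r^{a-2},r^a,r^{a-1}$ weights in the definition of $\pmb\|W\pmb\|_r$ uniformly). Second, apply this with $\ell^*\ge m+1$ for the monomials removed by the degree truncation, getting the factor $\eta^{m+1-a}=\eta^{m-a+1}$, and sum the resulting geometric-type series in $\ell^*$ (convergent because $8\eta<1$) against $\sup_{D(s,r)}\pmb\|X_P\pmb\|_r$. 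Third, combine with the Fourier-tail estimate controlled by \textbf{(H1)}. Fourth, for \eqref{XR}, note $R$ is a sub-sum of the Taylor–Fourier series of $P$, so on the same domain $\pmb\|X_R\pmb\|^*_{r,D(s-\rho,8\eta r)}\le \pmb\|X_P\pmb\|^*_{r,D(s-\rho,8\eta r)}+\pmb\|X_{P-R}\pmb\|^*\le c_1\pmb\|X_P\pmb\|^*_{r,D(s,r)}$ by monotonicity of the sup over the smaller domain plus the bound just proved. Throughout, the Lipschitz-in-$\xi$ version (the $*$ standing for either $\pmb\|\cdot\pmb\|_r$ or $\pmb\|\cdot\pmb\|_r^{\mathcal L}$) follows verbatim because all the estimates are linear in the coefficients $p_{k\imath\jmath\ell}$, and $\Delta_{\xi\zeta}$ acts coefficientwise.

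The main obstacle — and the reason the lemma deserves an explicit proof rather than a citation — is bookkeeping the interplay between the three different $r$-weights in the definition of $\pmb\|W\pmb\|_r$ (namely $r^{a-2}$ on the $x$-component, $r^a$ on $y$, $r^{a-1}$ on the $\tilde U$-component, and weight $1$ on the $w,\bar w$ components) and the degree $2|\imath|+|\jmath|+|\ell|$ of each monomial, so that the net power of $\eta$ coming out is uniformly $\ge m-a+1$ for every discarded monomial and for every component of the Hamiltonian vector field $X_R = (R_y, -R_x, \sqrt{-1}R_{\bar u}, -\sqrt{-1}R_u)$. One has to check, for instance, that differentiating in $y$ (which lowers $|\imath|$ by one but the $\partial_y$ component is weighted by $r^{-a}$ not $r^{-(a-2)}$) still leaves enough powers of $\eta$; the choice $a\ge 2$ and the constraint $\frac{m+1}{3}\le a$ built into \eqref{a} are precisely what make every case work. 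Once the worst-case monomial is identified and shown to still give $\eta^{m-a+1}$, the rest is summation of a convergent series and the Fourier tail, both routine.
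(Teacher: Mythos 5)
Your overall strategy is the same as the paper's: split $P-R$ into a Fourier tail in $x$ (controlled by exponential decay plus \textbf{(H1)}, which is exactly how the paper treats its piece $II$ via P\"oschel's Lemma A.2) and a degree-truncation tail controlled by Cauchy/scaling estimates on the shrunk domain, then get (\ref{XR}) by the triangle inequality. But as written, your treatment of the degree-truncation tail has a genuine gap. The truncation $R$ drops a monomial when $2|\imath|+|\jmath|\geq m+1$ \emph{or} $|\ell_1|+|\ell_2|\geq 3$; these are two different families, and your ``elementary inequality'' with $\ell^*=2|\imath|+|\jmath|+|\ell|$ and the claim that all removed monomials satisfy $\ell^*\geq m+1$ is false for the second family: a term like $w^{\ell_1}\bar w^{\ell_2}$ with $|\ell|=3$ and $\imath=\jmath=0$ has $\ell^*=3\ll m+1$, so the factor $\eta^{\ell^*-a}$ you extract is not $\leq\eta^{m+1-a}$ (for large $m$ it is not even small). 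For these terms the smallness must come from the fact that $w,\bar w$ live in balls of radius $r^a$, not $r$: on $D(s-\rho,8\eta r)$ each power of $w$ or $\bar w$ scales by $(8\eta)^a$, so such a monomial gains $(8\eta)^{2|\imath|+|\jmath|+a|\ell|}\geq$ a factor $\eta^{3a}$ at the function level, and after paying the worst weight $(\eta r)^{-a}$ one is left with $\eta^{2a}$, which is $\leq\eta^{m+1-a}$ precisely because $3a\geq m+1$ from (\ref{a}). This is the paper's separate estimate of its piece $III$, and it is the only place where the constraint $a\geq\frac{m+1}{3}$ is actually used; your final paragraph invokes that constraint but attributes it to the $y$-weight bookkeeping and never carries out the $|\ell|\geq3$ case, so the case analysis your own plan calls for is missing at exactly the point where it is needed.

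The rest of the proposal is sound and matches the paper: the $2|\imath|+|\jmath|\geq m+1$ terms do give $\eta^{m+1-a}$ by the scaling you describe, the Fourier tail gives $K_+^n{\rm e}^{-K_+\rho}<\eta^{m+1}$ by \textbf{(H1)} (your two-piece splitting, tail of the full series first, is an inessential reordering of the paper's three-piece splitting $I+II+III$), the Lipschitz version does follow coefficientwise, and (\ref{XR}) follows from $\pmb\|X_R\pmb\|^*_{r,D(s-\rho,8\eta r)}\leq\pmb\|X_P\pmb\|^*_{r,D(s,r)}+\pmb\|X_P-X_R\pmb\|^*_{r,D(s-\rho,8\eta r)}$ exactly as in the paper. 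To repair the proposal, state the scaling inequality with the weighted degree $2|\imath|+|\jmath|+a|\ell|$ (not $2|\imath|+|\jmath|+|\ell|$) and treat the two discarded families separately, using $\ell^*\geq m+1$ for the first and $a|\ell|\geq 3a\geq m+1$ for the second.
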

\begin{proof}
{Notice the $m$ order degeneracy and the choice of $a$. }Denote
\begin{align*}
I&=\sum_{2|\imath|+|\jmath|\geq m+1}p_{k\imath\jmath\ell}y^{\imath}z^{\jmath} w^{\ell_1}\bar{w}^{\ell_2} {\rm e}^{\sqrt{-1}\langle k,x\rangle},~~~~II=\sum_{|k|>K_+,2|\imath|+|\jmath|\leq m,|\ell_1|+|\ell_2|\leq2}p_{k\imath\jmath\ell}y^{\imath}z^{\jmath} w^{\ell_1}\bar{w}^{\ell_2} {\rm e}^{\sqrt{-1}\langle k,x\rangle},\\
III&=\sum_{|k|\leq K_+,2|\imath|+|\jmath|\leq m,|\ell_1|+|\ell_2|\geq3}p_{k\imath\jmath\ell}y^{\imath}z^{\jmath}w^{\ell_1}\bar{w}^{\ell_2} {\rm e}^{\sqrt{-1}\langle k,x\rangle}.
\end{align*}
Then
\begin{align}\label{P-R}
P-R=I+II+III.
\end{align}

We claim that
\begin{align}\label{XI}
\pmb\|X_{I}\pmb\|_{\eta r,D(s,8\eta r)}&=\sup_{D(s,8\eta r)}\{\frac{|I_y|}{(\eta r)^{a-2}}+\frac{|I_x|}{(\eta r)^a}+\frac{||I_{z}||_{\bar{p}}}{(\eta r)^{a-1}}+||I_{w}||_{\bar{p}}+||I_{\bar{w}}||_{\bar{p}}\}\\
&<c\eta^{m+1-a} \pmb\|X_P\pmb\|_{r,D(s,r)},\notag
\end{align}
and
\begin{align}\label{XI*}
\pmb\|X_{I}\pmb\|^{\mathcal{L}}_{\eta r,D(s,8\eta r)}<c\eta^{m+1-a} \pmb\|X_P\pmb\|^{\mathcal{L}}_{r,D(s,r)}.
\end{align}
Indeed,
\begin{align*}
|I_x|_{D(s,8\eta r)}&=|\sum_{2|\imath|+|\jmath|\geq m+1}\frac{\partial p_{k\imath\jmath\ell}{\rm e}^{\sqrt{-1}\langle k,x\rangle}}{\partial x}y^\imath z^{\jmath}w^{\ell_1}\bar w^{\ell_2}|_{D(s,8\eta r)}\\
&\leq c\sum_{2|\imath|+|\jmath|\geq m+1}\frac{|8\eta r|^{2|\imath|+|\jmath|+|\ell|}|P_x|_{D(s,r)}}{(r-8\eta r)^{2|\imath|+|\jmath|+|\ell|}}\\
&\leq c\sum_{2|\imath|+|\jmath|\geq m+1}\eta^{m+1}r^a\pmb\|X_P\pmb\|_{r,D(s,r)},
\end{align*}
where the second inequality follows from Cauchy estimate and the last inequality follows from the definition of $\pmb\|X_P\pmb\|_{r,D(s,r)}$. Namely,
\begin{align*}
\frac{|I_x|_{D(s,8\eta r)}}{(\eta r)^a}\leq c\eta^{m+1-a}\pmb\|X_P\pmb\|_{r,D(s,r)}.
\end{align*}
For any $\xi,\zeta\in\Pi$,
\begin{align*}
\frac{|I_x|^{\mathcal{L}}_{D(s,8\eta r)}}{(\eta r)^a}&=
\frac{|I_x(\xi)-I_x(\zeta)|_{D(s,8\eta r)}}{|\xi-\zeta|(\eta r)^a}\\
&=\frac{1}{|\xi-\zeta|(\eta r)^a}|\sum_{2|\imath|+|\jmath|\geq m+1}\frac{\partial (p_{k\imath\jmath\ell}(\xi)-p_{k\imath\jmath\ell}(\zeta)){\rm e}^{\sqrt{-1}\langle k,x\rangle}}{\partial x}y^\imath w_0^{\jmath_1}\bar{w}_0^{\jmath_2}w^{\ell_1}\bar w^{\ell_2}|_{D(s,8\eta r)}\\
&\leq c\sum_{2|\imath|+|\jmath|\geq m+1}\frac{|8\eta r|^{2|\imath|+|\jmath|+|\ell|}|P_x(\xi)-P_x(\zeta)|_{D(s,r)}}{|\xi-\zeta|(\eta r)^a(r-8\eta r)^{2|\imath|+|\jmath|+|\ell|}}\\
&\leq c\eta^{m+1-a}\pmb\|X_P\pmb\|_{r,D(s,r)}^{\mathcal{L}}.
\end{align*}

Similarly, we can prove
\begin{align*}
\frac{|I_y|_{D(s,8\eta r)}}{(\eta r)^{a-2}},~~\frac{||I_{z}||_{\bar{p}}}{(\eta r)^{a-1}},~~ ||I_{w}||_{\bar{p}},~~||I_{\bar{w}}||_{\bar{p}}\leq c\eta^{m+1-a}\pmb\|X_P\pmb\|_{r,D(s,r)},
\end{align*}
and
\begin{align*}
\frac{|I_y|^{\mathcal{L}}_{D(s,8\eta r)}}{(\eta r)^{a-2}},~~\frac{||I_{z}||^{\mathcal{L}}_{\bar{p}}}{(\eta r)^{a-1}},~~ ||I_{w}||^{\mathcal{L}}_{\bar{p}},~~||I_{\bar{w}}||^{\mathcal{L}}_{\bar{p}}\leq c\eta^{m+1-a}\pmb\|X_P\pmb\|^{\mathcal{L}}_{r,D(s,r)}.
\end{align*}
Thus (\ref{XI}) and (\ref{XI*}) hold.

We claim that
\begin{align}\label{XIII}
\pmb\|X_{III}\pmb\|^*_{\eta r,D(s,8\eta r)}
&<c\eta^{m+1-a} \pmb\|X_P\pmb\|^*_{r,D(s,r)}.
\end{align}
Indeed,
\begin{align*}
|III_x|_{D(s,8\eta r)}&=|\sum_{|k|\leq K_+,2|\imath|+|\jmath|\leq m,|\ell_1|+|\ell_2|\geq3}\frac{\partial p_{k\imath\jmath\ell}{\rm e}^{\sqrt{-1}\langle k,x\rangle}}{\partial x}y^\imath z^{\jmath}w^{\ell_1}\bar w^{\ell_2}|_{D(s,8\eta r)}\\
&\leq c\sum_{|k|\leq K_+,2|\imath|+|\jmath|\leq m,|\ell_1|+|\ell_2|\geq3}\frac{|8\eta r|^{2|\imath|+|\jmath|+a|\ell|}|P_x|_{D(s,r)}}{(r-8\eta r)^{2|\imath|+|\jmath|+a|\ell|}}\\
&\leq c \eta^{3a}r^{a}\pmb\|X_P\pmb\|_{r,D(s,r)},\\
\end{align*}
i.e.,
{{\begin{align*}
\frac{|III_x|_{D(s,8\eta r)}}{(\eta r)^a}\leq c\eta^{2a}\pmb\|X_P\pmb\|_{r,D(s,r)}\leq  c\eta^{m+1-a}\pmb\|X_P\pmb\|_{r,D(s,r)}.
\end{align*}}}
Similarly, we can prove
\begin{align*}
\frac{|III_y|_{D(s,8\eta r)}}{(\eta r)^{a-2}},~~\frac{||III_{z}||_{\bar{p}}}{(\eta r)^{a-1}},~~ ||III_{w}||_{\bar{p}},~~||III_{\bar{w}}||_{\bar{p}}\leq c\eta^{m+1-a}\pmb\|X_P\pmb\|_{r,D(s,r)},
\end{align*}
and
\begin{align*}
\frac{|III_y|^{\mathcal{L}}_{D(s,8\eta r)}}{(\eta r)^{a-2}},~~\frac{||III_{z}||^{\mathcal{L}}_{\bar{p}}}{(\eta r)^{a-1}},~~ {||III_{w}||^{\mathcal{L}}_{\bar{p}}},~~{||III_{\bar{w}}||^{\mathcal{L}}_{\bar{p}}}\leq c\eta^{m+1-a}\pmb\|X_P\pmb\|_{r,D(s,r)}.
\end{align*}

We now estimate $\pmb\|X_{II}\pmb\|^*_{\eta r,D(s-\rho,8\eta r)}$. According to the definition of $II$, the Lemma A.$2$. in \cite{poschel2} and (\ref{XI}), we have
\begin{align*}
|II_x|_{D(s-\rho,8\eta r)}&=|\frac{\partial(P-I-III)}{\partial x}-\frac{\partial R}{\partial x}|_{D(s-\rho,8\eta r)}\leq cK_+^n{\rm e}^{-K_+\rho}|\frac{\partial(P-I-III)}{\partial x}|_{D(s,8\eta r)}\\
&\leq cK_+^n{\rm e}^{-K_+\rho}(r^a\pmb\|X_P\pmb\|_{r,D(s,r)}+\eta^{m+1}r^a\pmb\|X_{P}\pmb\|_{r,D(s,r)}+\eta^{m+1}r^a\pmb\|X_{P}\pmb\|_{ r,D(s, r)})\\
&\leq cK_+^n{\rm e}^{-K_+\rho}r^a(1+\eta^{m+1}+\eta^{m+1})\pmb\|X_{P}\pmb\|_{r,D(s,r)},
\end{align*}
i.e.,
\begin{align}\label{IIx}
\frac{|II_x|_{D(s-\rho,8\eta r)}}{(\eta r)^a}&\leq c\frac{K_+^n{\rm e}^{-K_+\rho}}{\eta^a}\pmb\|X_{P}\pmb\|_{r,D(s,r)}\leq c\eta^{m+1-a}\pmb\|X_{P}\pmb\|_{r,D(s,r)},
\end{align}
where the last inequality follows from (\textbf{H1}).
Similarly, we can get
\begin{align}\label{IIy}
\frac{|II_y|_{D(s-\rho,8\eta r)}}{(\eta r)^{a-2}},~~\frac{||II_{z}||_{\bar{p}}}{(\eta r)^{a-1}},~~ ||II_{w}||_{\bar{p}},~~||II_{\bar{w}}||_{\bar{p}}\leq c\eta^{m+1-a}\pmb\|X_P\pmb\|_{r,D(s,r)},
\end{align}
and
\begin{align}\label{IIy*}
\frac{|II_y|^{\mathcal{L}}_{D(s-\rho,8\eta r)}}{(\eta r)^{a-2}},~~\frac{||II_{z}||^{\mathcal{L}}_{\bar{p}}}{(\eta r)^{a-1}},~~ ||II_{w}||^{\mathcal{L}}_{\bar{p}},~~||II_{\bar{w}}||^{\mathcal{L}}_{\bar{p}}\leq c\eta^{m+1-a}\pmb\|X_P\pmb\|_{r,D(s,r)}.
\end{align}
Then by (\ref{IIx}), (\ref{IIy}) and (\ref{IIy*})
\begin{align}\label{XII}
\pmb\|X_{II}\pmb\|^*_{\eta r,D(s-\rho,8\eta r)}
<c\eta^{m+1-a} \pmb\|X_P\pmb\|^*_{r,D(s,r)}.
\end{align}
Therefore, it follows from (\ref{P-R}), (\ref{XI}),  (\ref{XIII}) and (\ref{XII}) that
\begin{align*}
\pmb\|X_P-X_R\pmb\|^*_{\eta r,D(s-\rho,8\eta r)}&\leq c\eta^{m+1-a} \pmb\|X_P\pmb\|^*_{r,D(s,r)},
\end{align*}
and
\begin{align*}
\pmb\|X_R\pmb\|^*_{r,D(s-\rho,8\eta r)}
&\leq \pmb\|X_P\pmb\|^*_{r,D(s-\rho,8\eta r)}+ \pmb\|X_{I}+X_{II}+X_{III}\pmb\|^*_{r,D(s-\rho,8\eta r)}\\
&\leq \pmb\|X_P\pmb\|^*_{r,D(s-\rho,8\eta r)}+ \pmb\|X_{P}-X_{R}\pmb\|^*_{r,D(s-\rho,8\eta r)}\\
&\leq\pmb\|X_P\pmb\|^*_{r,D(s,r)}+c\eta^{m-a+1}\pmb\|X_P\pmb\|^*_{r,D(s,r)}\\
&\leq c\pmb\|X_P\pmb\|^*_{r,D(s,r)}.
\end{align*}
\end{proof}

\subsubsection{Homological Equation}

We first construct a generalized Hamiltonian $F$ of the form
\begin{align}\label{eq3}
F&=\sum_{|k|\leq K_+,~2|\imath|+|\jmath|\leq m,|\ell_1|+|\ell_2|\leq2, |k|+|\ell_1|+|\ell_2|\neq0}F_{k\imath\jmath\ell_1\ell_2}y^{\imath}z^\jmath w^{\ell_1}\bar w^{\ell_2} {\rm e}^{\sqrt{-1}\langle k,x\rangle},
\end{align}
which satisfies the equation
\begin{equation}\label{eq4}
\{N,F\}+R-[R]-Q=0,
\end{equation}
where $[R]=\frac{1}{(2\pi)^n}\int_{\mathbb{T}^n}R(x,y,z,w,\bar w){\rm d}x$ is the average of the truncation $R$, and the correction term
\begin{align}\label{Q}
Q=(\partial_zg+\partial_zf)J\partial_zF|_{2|\imath|+|\jmath|> m, or |\ell|> 2}.
\end{align}
{
Notice that
\begin{align}\label{NF}
\{N,F\}=-\partial_yN\partial_xF+\partial_xN\partial_yF+\partial_zNJ\partial_zF-\sqrt{-1}\partial_{\bar w}N\partial_{w}F+\sqrt{-1}\partial_wN\partial_{\bar w}F.
\end{align}
Recall that
\begin{align*}
f&=\sum_{4\leq2|\imath'|\leq m}f_{\imath000} y^{\imath'}+\sum_{2|\imath'|+|\jmath'|\leq m,1\leq|\imath'|,|\jmath'|}f_{\imath'\jmath'00} y^{\imath'} z^{\jmath'}+\sum_{0<2|\imath'|+|\jmath'|\leq m}f_{\imath'\jmath'11} y^{\imath'} z^{\jmath'} w\bar w\\
&=:f_1+f_2,
\end{align*}
where $f_1=\sum_{4\leq2|\imath'|\leq m}f_{\imath000} y^{\imath'}$ and $f_2=f-f_1$,
and
$$g_\nu=\sum_{2\leq|\beta|\leq m}g_{0\beta00}z^{\beta},~~~~~1\leq\nu.$$
Substituting (\ref{Nnu}) and (\ref{eq3}) into (\ref{NF}), we can get
\begin{align}\label{NF1}
\{N,F\}=&-\sqrt{-1}\langle\omega+\partial_yf_1,k\rangle F_{k\imath\jmath\ell_1\ell_2}y^{\imath}z^\jmath w^{\ell_1}\bar w^{\ell_2} {\rm e}^{\sqrt{-1}\langle k,x\rangle}+\partial_yf_2\partial_xF+(\partial_zg+\partial_zf)J\partial_zF\notag\\
&-\sqrt{-1}\langle\Omega,\ell_1-\ell_2\rangle F_{k\imath\jmath\ell_1\ell_2}y^{\imath}z^\jmath w^{\ell_1}\bar w^{\ell_2} {\rm e}^{\sqrt{-1}\langle k,x\rangle}\\
&-\sqrt{-1}\langle\partial_{w\bar w}f,\ell_1-\ell_2\rangle F_{k\imath\jmath\ell_1\ell_2}y^{\imath}z^\jmath w^{\ell_1}\bar w^{\ell_2} {\rm e}^{\sqrt{-1}\langle k,x\rangle}.\notag
\end{align}

In order to simplify the notations, we sometimes omit the subscript of $\sum$ and only use $\sum$ to represent the sum to the index over the corresponding range of variation.

We begin to calculate the second term (\ref{NF1}):
\begin{align}\label{A}
\partial_y f_2 \partial_x F=&\sum_{2|\imath'|+|\jmath'|\leq m;1\leq|\imath'|,|\jmath'|;|\ell'|=0,1}f_{\imath'\jmath'\ell'\ell'} \partial_y(y^{\imath'}) z^{\jmath'} w^{\ell'}\bar w^{\ell'} \\
&\sum_{|k|\leq K_+,~2|\imath|+|\jmath|\leq m,|\ell_1|+|\ell_2|\leq2, |k|+|\ell_1|+|\ell_2|\neq0}F_{k\imath\jmath\ell_1\ell_2}y^{\imath}z^\jmath w^{\ell_1}\bar w^{\ell_2} \partial_x({\rm e}^{\sqrt{-1}\langle k,x\rangle})\notag\\
=&\sum\mathcal{A}_{\textsf{b}(\textsf{c}+1)\ell'\ell'}F_{k(\imath-\textsf{b}+1)(\jmath-\textsf{c}-1)(\ell_1-\ell')(\ell_2-\ell')}y^{\imath}z^\jmath w^{\ell_1}\bar w^{\ell_2} {\rm e}^{\sqrt{-1}\langle k,x\rangle},\notag
\end{align}
where $\mathcal{A}_{\textsf{b}(\textsf{c}+1)\ell'\ell'}$ is concerned with $\partial_y f_2$ and $k$, and $2|\textsf{b}|+\textsf{c}+1\leq m$, $1\leq|\textsf{b}|$, $0\leq |\textsf{c}|$, $|\ell'|=0,1$, ${|k|\leq K_+,~2|\imath|+|\jmath|\leq m,|\ell_1|+|\ell_2|\leq2, |k|+|\ell_1|+|\ell_2|\neq0}$.

Next we calculate the third term of (\ref{NF1}):
\begin{align}\label{mathcalB}
(\partial_zg+\partial_zf)J\partial_zF&=(\partial_zg+\partial_zf)J\partial_zF|_{2|\imath|+|\jmath|\leq m, |\ell|\leq 2}+(\partial_zg+\partial_zf)J\partial_zF|_{2|\imath|+|\jmath|> m, or |\ell|> 2}\\
&=:(\partial_zg+\partial_zf)J\partial_zF|_{2|\imath|+|\jmath|\leq m, |\ell|\leq 2}+Q.\notag
\end{align}
Specially,
\begin{align*}
\partial_zgJ\partial_zF|_{2|\imath|+|\jmath|\leq m, |\ell|\leq 2}
&=\partial_zgJ\sum_{|k|\leq K_+,~2|\imath|+|\jmath|\leq m,|\ell_1|+|\ell_2|\leq2, |k|+|\ell_1|+|\ell_2|\neq0} F_{k\imath\jmath\ell_1\ell_2}y^{\imath}\partial_z(z^{\jmath}) w^{\ell_1}\bar w^{\ell_2} {\rm e}^{\sqrt{-1}\langle k,x\rangle}\\
&=:\sum_{}S_{\imath\jmath\ell}F_{k\imath\jmath\ell_1\ell_2}y^\imath z^{\jmath}w^{\ell_1}\bar w^{\ell_2}{\rm e}^{\sqrt{-1}\langle k,x\rangle}+\sum_{}\mathcal{B}_iF_{k\imath(\jmath-i+1)\ell_1\ell_2}y^\imath z^{\jmath}w^{\ell_1}\bar w^{\ell_2}{\rm e}^{\sqrt{-1}\langle k,x\rangle} ,
\end{align*}
where $S_{\imath\jmath\ell}$ is a ($|\imath|+|\jmath|+|\ell|$) order tensor and concerned with $\partial_z^2g(0)$ and $J$, $\mathcal{B}_i$  is concerned with $\partial_z^2g(z)-\partial_z^2g(0)$ and $J$, and $2\leq i$, ${|k|\leq K_+,~2|\imath|+|\jmath|\leq m,|\ell_1|+|\ell_2|\leq2, |k|+|\ell_1|+|\ell_2|\neq0}$.
And
\begin{align*}
\partial_zfJ\partial_zF|_{2|\imath|+|\jmath|\leq m, |\ell|\leq 2}=&\partial_zf
J
\sum_{|k|\leq K_+,~2|\imath|+|\jmath|\leq m,|\ell_1|+|\ell_2|\leq2, |k|+|\ell_1|+|\ell_2|\neq0} F_{k\imath\jmath\ell_1\ell_2}y^{\imath}\partial _z(z^{\jmath}) w^{\ell_1}\bar w^{\ell_2} {\rm e}^{\sqrt{-1}\langle k,x\rangle}\\
=:&\sum \mathcal{B}_{\textsf{b}(\textsf{c}+1)\ell'\ell'}F_{k(\imath-\textsf{b})(\jmath-\textsf{c}+1)(\ell_1-\ell')(\ell_2-\ell')}y^{\imath}z^{\jmath}w^{\ell_1}\bar w^{\ell_2}{\rm e}^{\sqrt{-1}\langle k,x\rangle},
\end{align*}
where $\mathcal{B}_{\textsf{b}(\textsf{c}+1)\ell'\ell'}$ is concerned with $\partial_{y}^{\textsf{b}}\partial_z^{\textsf{c}+2}\partial_w^{\ell'}\partial_{\bar w}^{\ell'}f(0,0,0,0)$, $|\ell'|=0, 1$ and $J$, and $2|\textsf{b}|+\textsf{c}+1\leq m$, $1\leq|\textsf{b}|$, $0\leq |\textsf{c}|$, ${|k|\leq K_+,~2|\imath|+|\jmath|\leq m,|\ell_1|+|\ell_2|\leq2, |k|+|\ell_1|+|\ell_2|\neq0}$.

Now, we calculate the fifth term of (\ref{NF1}):
\begin{align}\label{C}
&-\sum_{|k|\leq K_+,~2|\imath|+|\jmath|\leq m,|\ell_1|+|\ell_2|\leq2, |k|+|\ell_1|+|\ell_2|\neq0}\sqrt{-1}\langle\partial_{w\bar w}f,\ell_1-\ell_2\rangle F_{k\imath\jmath\ell_1\ell_2}y^{\imath}z^\jmath w^{\ell_1}\bar w^{\ell_2} {\rm e}^{\sqrt{-1}\langle k,x\rangle}\\
&=\sum \mathcal{C}_{\textsf{b}(\textsf{c}+1)}F_{k(\imath-\textsf{b})(\jmath-\textsf{c}-1)\ell_1\ell_2}y^{\imath}z^\jmath w^{\ell_1}\bar w^{\ell_2} {\rm e}^{\sqrt{-1}\langle k,x\rangle},\notag
\end{align}
where $\mathcal{C}_{\textsf{b}(\textsf{c}+1)}$ is concerned with $\partial_y^{\textsf{b}}\partial_z^{\textsf{c}+1}\partial_{w}\partial_{\bar w}f(0,0,0,0)$, $2|\textsf{b}|+\textsf{c}+1\leq m$, $1\leq|\textsf{b}|$, $0\leq |\textsf{c}|$, ${|k|\leq K_+,~2|\imath|+|\jmath|\leq m,|\ell_1|+|\ell_2|\leq2, |k|+|\ell_1|+|\ell_2|\neq0}$.

Substituting (\ref{A}), (\ref{mathcalB}) and (\ref{C}) into (\ref{NF1}), combining (\ref{NF1}) with (\ref{eq4}),  and
comparing the coefficients above, we then obtain the following quasi-linear equations:

\begin{itemize}
\item[1.] For $\ell_1=\ell_2$, $|\ell_1|=0$, $k\neq0$,
    \begin{align}\label{he1}
    &\sqrt{-1}(\langle\omega+\partial_yf_1,k\rangle I_{(2b)^{\imath+\jmath}}+S_{\imath\jmath\ell})F_{k\imath\jmath00}+\mathcal{A}_{\textsf{b}(\textsf{c}+1)00}F_{k(\imath-\textsf{b}+1)(\jmath-\textsf{c}-1)00}\\
    &+\mathcal{B}_{\textsf{b}(\textsf{c}+1)00}F_{k(\imath-\textsf{b})(\jmath-\textsf{c}+1)00}+\mathcal{B}_iF_{k\imath(\jmath-i+1)00}+\mathcal{C}_{\textsf{b}(\textsf{c}+1)}F_{k(\imath-\textsf{b})(\jmath-\textsf{c}-1)00}=p_{k\imath\jmath00}.\notag\end{align}
\item[2.] For $\ell_1\neq\ell_2$, \begin{align}\label{he2}
    &\sqrt{-1}((\langle\omega+\partial_yf_1,k\rangle +\langle\Omega,\ell_1-\ell_2\rangle) I_{(2b)^{\imath+\jmath+\ell_1+\ell_2}}+S_{\imath\jmath\ell})F_{k\imath\jmath\ell_1\ell_2}\\
    &+\mathcal{A}_{\textsf{b}(\textsf{c}+1)\ell'\ell'}F_{k(\imath-\textsf{b}+1)(\jmath-\textsf{c}-1)(\ell_1-\ell')(\ell_2-\ell')}+\mathcal{B}_{\textsf{b}(\textsf{c}+1)\ell'\ell'}F_{k(\imath-\textsf{b})(\jmath-\textsf{c}+1)(\ell_1-\ell')(\ell_2-\ell')}+\mathcal{B}_iF_{k\imath(\jmath-i+1)\ell_1\ell_2}\notag\\
    &+\mathcal{C}_{\textsf{b}(\textsf{c}+1)}F_{k(\imath-\textsf{b})(\jmath-\textsf{c}-1)\ell_1\ell_2}=p_{k\imath\jmath\ell_1\ell_2}\notag.
    \end{align}
\item[3.] For $\ell_1=\ell_2$, $|\ell_1|=1$, $k\neq0$, \begin{align}\label{he3}
    &\sqrt{-1}(\langle\omega+\partial_yf_1,k\rangle I_{(2b)^{\imath+\jmath+2}}+S_{\imath\jmath\ell})F_{k\imath\jmath11}+\mathcal{A}_{\textsf{b}(\textsf{c}+1)00}F_{k(\imath-\textsf{b}+1)(\jmath-\textsf{c}-1)11}\\
    &+\mathcal{A}_{\textsf{b}(\textsf{c}+1)11}F_{k(\imath-\textsf{b}+1)(\jmath-\textsf{c}-1)00}+\mathcal{B}_{\textsf{b}(\textsf{c}+1)00}F_{k(\imath-\textsf{b})(\jmath-\textsf{c}+1)11}+\mathcal{B}_{\textsf{b}(\textsf{c}+1)11}F_{k(\imath-\textsf{b})(\jmath-\textsf{c}+1)00}\notag\\
    &+\mathcal{B}_iF_{k\imath(\jmath-i+1)11}+\mathcal{C}_{\textsf{b}(\textsf{c}+1)}F_{k(\imath-\textsf{b})(\jmath-\textsf{c}-1)11}=p_{k\imath\jmath11}.\notag\end{align}
\end{itemize}
Here  the notations $\textsf{b}, \textsf{c}, i, \ell', \imath, \jmath, k$ are defined as above, $\Omega=(\Omega^j), j\in\mathbb{N}_+\setminus\mathcal{N}$, \\ $\mathcal{A}_{\textsf{b}(\textsf{c}+1)00}F_{k(\imath-\textsf{b}+1)(\jmath-\textsf{c}-1)00}$ stands for $\sum_{2|\textsf{b}|+\textsf{c}+1\leq m, 1\leq|\textsf{b}|, 0\leq |\textsf{c}|}\mathcal{A}_{\textsf{b}(\textsf{c}+1)00}F_{k(\imath-\textsf{b}+1)(\jmath-\textsf{c}-1)00}$, and the rest of terms are analogously defined.
}

We declare that the quasi-linear equations (\ref{he1})-(\ref{he3}) are solvable under some suitable conditions. We denote
$${\Pi_+}=\{\xi\in\Pi:|\langle k,\omega(\xi)\rangle+\langle\ell,\Omega(\xi)\rangle|\geq\frac{\gamma\langle \ell\rangle_d}{(1+|k|)^\tau}, |k|\leq K_+, |\ell|\leq2, |k|+|\ell|\neq0 \}.$$
Then we can solve equations (\ref{he1})-(\ref{he3}) on $\Pi_+$. The details can be seen in the following lemma:
\begin{lemma}\label{le2}
Assume that
\begin{align*}
&{\textbf{\textsc{(H2)}} ~8r<\frac{\langle\ell\rangle_d(\gamma-\gamma_+)}{(K_++1)^{\tau+1}}.} 
\end{align*}
Then the quasi-linear equations (\ref{he1})-(\ref{he3}) have a solution $F_{k\imath\jmath\ell}$ satisfying
\begin{align}\label{XF}
\pmb\|X_{F}\pmb\|^{\lambda}_{r,D(s-\rho,{r})}
&\leq c_2A_\rho r^{m-a}\eta^m,
\end{align}
where $0<\lambda<\frac{\gamma^{(2b)^{\imath+\jmath+\ell}}}{M}$, and
\begin{align}\label{Arho}
A_\rho^2=(\sum_{0<|k|\leq K_+,~2|\imath|+|\jmath|\leq m, ~|\ell|\leq2}(\frac{(1+|k|)^{1+(2b)^{\imath+\jmath+\ell}\tau}}{(\langle\ell\rangle_d)^{(2b)^{\imath+\jmath+\ell}}})^2{\rm e}^{-2|k|\rho}).
\end{align}
Moreover,
\begin{align*}
\pmb\|DX_{F}\pmb\|_{r,r,D(s-2\rho,\frac{r}{2})}\leq c_2\frac{1}{\rho r^a}\pmb\|X_F\pmb\|_{r,D(s-\rho,r)}.
\end{align*}
\end{lemma}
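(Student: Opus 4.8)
The plan is to solve the quasi-linear system \eqref{he1}--\eqref{he3} by exploiting its triangular structure with respect to the total order $2|\imath|+|\jmath|+|\ell|$. First I would observe that the ``diagonal'' operator acting on $F_{k\imath\jmath\ell_1\ell_2}$ is $\sqrt{-1}\bigl(\langle\omega+\partial_yf_1,k\rangle+\langle\Omega,\ell_1-\ell_2\rangle\bigr)I+\sqrt{-1}S_{\imath\jmath\ell}$, where $S_{\imath\jmath\ell}$ is built from $\partial_z^2g(0)$ and $J$. On $\Pi_+$ the small-divisor term is bounded below by $\gamma\langle\ell\rangle_d/(1+|k|)^\tau$; since $\partial_yf_1=O(\|X_{P}\|)$ is a small perturbation of $\omega$ absorbed by shrinking the constant (this is where the factor $\langle\ell\rangle_d$ and the definition of $\Pi_+$ are used), the diagonal operator is invertible with inverse of norm $\lesssim (1+|k|)^{(2b)^{\imath+\jmath+\ell}\tau}/(\gamma\langle\ell\rangle_d)^{(2b)^{\imath+\jmath+\ell}}$ — the power $(2b)^{\imath+\jmath+\ell}$ coming from inverting a matrix of that size via Cramer's rule (determinant in the denominator). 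I would then solve the system recursively: for fixed $k,\ell$, order the unknowns by increasing $2|\imath|+|\jmath|$; at each stage the off-diagonal terms $\mathcal{A},\mathcal{B},\mathcal{C}$ involve only coefficients $F$ of strictly lower order (note $\textsf{b}\ge1$ in every off-diagonal coefficient, so the $y$- or $z$-degree strictly drops), hence move to the right-hand side and are already estimated.

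The second step is the quantitative bookkeeping. Writing the recursion out, $F_{k\imath\jmath\ell}$ equals the diagonal inverse applied to $p_{k\imath\jmath\ell}$ plus a sum of diagonal inverses applied to products of $\partial g,\partial f$-coefficients with lower-order $F$'s. Using the truncation estimate \eqref{XR} of Lemma \ref{le1} to bound the $p_{k\imath\jmath\ell}$, the bound \eqref{XP} on $X_{P_\nu}$ (so $\|X_R\|\lesssim\gamma^{2(2b)^{m+2}}r^{m-a}\eta^m$), and the structure of $g_\nu,f_\nu$ from \eqref{fnu} (coefficients controlled by the iteration parameters), an induction on $2|\imath|+|\jmath|$ gives a bound for each Taylor--Fourier coefficient of $F$ of the form (small divisor factor)$\times$(size of $X_P$). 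Summing the squares over $0<|k|\le K_+$, $2|\imath|+|\jmath|\le m$, $|\ell|\le2$ and passing from coefficients to the vector-field norm on $D(s-\rho,r)$ — i.e. dividing and multiplying by the appropriate powers of $r$ according to the weighted norm $\pmb\|\cdot\pmb\|_r$ — produces exactly the factor $A_\rho$ of \eqref{Arho} together with $r^{m-a}\eta^m$, yielding \eqref{XF}. The Lipschitz-in-$\xi$ part follows the same scheme: differentiating the recursion in $\xi$ brings down $\partial_\xi\omega,\partial_\xi\Omega$ (controlled by $M$, hence the restriction $\lambda<\gamma^{(2b)^{\imath+\jmath+\ell}}/M$) and $\partial_\xi$ of the lower-order $F$'s, and one closes the induction with the same structure; assumption \textbf{(H2)} is what guarantees that the shrunken Diophantine condition defining $\Pi_+$ is still met after the $r$-dependent perturbation of the frequencies, so the divisor estimates remain valid. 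Finally the derivative bound $\pmb\|DX_F\pmb\|\le c_2(\rho r^a)^{-1}\pmb\|X_F\pmb\|$ on the smaller domain $D(s-2\rho,r/2)$ is a direct Cauchy estimate in $x$ (loss $\rho^{-1}$) and in $(y,z,w,\bar w)$ (loss $r^{-a}$ coming from the weights).

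The main obstacle I anticipate is the recursive estimate itself: because the system is only \emph{quasi}-linear — the off-diagonal blocks $\mathcal{A},\mathcal{B},\mathcal{C}$ carry unknowns $F$ of lower degree but the coupling is genuinely present — one must verify that the induction over $2|\imath|+|\jmath|$ (for each fixed $k,\ell$) does not accumulate an uncontrolled combinatorial constant as the degree runs up to $m$, and that the contribution of $\mathcal{B}_i$ (built from $\partial_z^2g(z)-\partial_z^2g(0)$, hence carrying an extra power of $\|z\|\lesssim r$) genuinely improves rather than worsens the bound. The degeneracy order $m$ is chosen (via \eqref{m}, $m\ge L+\tfrac{\sqrt{4L^2+2L}}{2}$) precisely so that these lower-order gains dominate; tracking that is the delicate point. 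The correction term $Q$ defined in \eqref{Q} is deliberately split off so that only terms with $2|\imath|+|\jmath|\le m$, $|\ell|\le2$ enter the homological equation \eqref{eq4}, keeping the recursion finite — I would note this reduction explicitly before starting the estimates.
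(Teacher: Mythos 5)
Your proposal is correct and follows essentially the same route as the paper's proof: on $\Pi_+$ the diagonal divisor $\langle k,\omega+\partial_yf_1\rangle+\langle\ell,\Omega\rangle$ plus the eigenvalue contribution of $S_{\imath\jmath\ell}$ (controlled via \textbf{\textsc{(H2)}}) is bounded below by $\gamma\langle\ell\rangle_d/2(1+|k|)^\tau$, the blocks are inverted by the adjugate/Cramer argument giving the $(2b)^{\imath+\jmath+\ell}$ powers, Cauchy--Schwarz over $k,\imath,\jmath,\ell$ produces $A_\rho$, the Lipschitz part is handled by differencing in $\xi$ (the paper's $U1+U2$ split), and the $DX_F$ bound is the generalized Cauchy estimate. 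The only cosmetic difference is that you make the triangular structure explicit through a recursion in the total degree, whereas the paper encodes it implicitly in the lower bound for $\det B_{\imath\jmath\ell}$ of the full coefficient matrix.
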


\begin{proof}
{For $\forall y\in D(r)$, by \textbf{(H2)}, \begin{align*}
|\partial_yf_1|&\leq cr<\frac{\gamma\langle\ell\rangle_d}{8(|k|+1)^{\tau+1}}.
\end{align*}
Denote $$L_k=\langle k,\omega+\partial_yf_1\rangle+\langle \ell,\Omega\rangle+\tilde\lambda,$$
where $\tilde\lambda$ is the minimum in the absolute value of the eigenvalue of $ S_{\imath\jmath\ell}$, and $$|\tilde\lambda|\leq|\partial_z^2g(0)|\leq\gamma_-^{2(2b)^{m+2}}r_-^{m-2}\eta_-^m\leq\gamma_-^{2(2b)^{m+2}}r^{m-2}\eta^2\leq\frac{\gamma\langle\ell\rangle_d}{8(|k|+1)^{\tau+1}}.$$
Then
\begin{align*}
|L_k|&=|\langle k,\omega\rangle+\langle\ell,\Omega\rangle|-|\tilde\lambda|-|\langle k,\partial_yf_1\rangle|\\
&\geq\frac{\gamma\langle\ell\rangle_d}{(1+|k|)^\tau}-\frac{\gamma\langle\ell\rangle_d}{8(1+|k|)^\tau}-\frac{\gamma\langle\ell\rangle_d}{8(1+|k|)^\tau}\\
&\geq\frac{\gamma\langle\ell\rangle_d}{2(1+|k|)^\tau},
\end{align*}
and
\begin{align}\label{LI}
|\det L_kI_{(2b)^{\imath+\jmath+\ell}}|\geq(\frac{\gamma\langle\ell\rangle_d}{2(1+|k|)^\tau})^{(2b)^{\imath+\jmath+\ell}}.
\end{align}
Define the coefficient matrix of (\ref{he1})-(\ref{he3}) by $B_{\imath\jmath\ell}$.

Then by (\ref{LI}),

\begin{align}\label{B}
|\det B_{\imath\jmath\ell}|\geq \frac{(\gamma\langle\ell\rangle_d)^{(2b)^{\imath+\jmath+\ell}}}{2^{(2b)^{\imath+\jmath+\ell}}(|k|+1)^{\tau(2b)^{\imath+\jmath+\ell}}}.
\end{align}
Note that
\begin{align*}
|B_{\imath\jmath\ell}^{-1}|=|\frac{\textrm{adj} B_{\imath\jmath\ell}}{\det B_{\imath\jmath\ell}}|\leq c\frac{(|k|+1)^{\tau(2b)^{\imath+\jmath+\ell}+(2b)^{\imath+\jmath+\ell}-1}}{(\gamma\langle\ell\rangle_d)^{(2b)^{\imath+\jmath+\ell}}}.
\end{align*}
Applying the identity
\begin{align*}
\partial_y^jB_{\imath\jmath\ell}^{-1}=-\sum_{|j'|=1}^{|j|}\left(\begin{array}{c}
j\\
j'
\end{array}\right)(\partial_y^{j-j'}B_{\imath\jmath}^{-1}\partial_y^{j'}B_{\imath\jmath})B_{\imath\jmath}^{-1}
\end{align*}
inductively, we have
\begin{align}\label{B-}
|\partial_y^j B_{\imath\jmath\ell}^{-1}|_{ D(s)\times G_+}&\leq c(|k|+1)|^{|j|}|B_{\imath\jmath\ell}^{-1}|^{|j|+1}\\
&\leq c\frac{(1+|k|)^{|j|+(|j|+1)(2b)^{\imath+\jmath+\ell}\tau}}{(\gamma\langle\ell\rangle_d)^{(|j|+1)(2b)^{\imath+\jmath+\ell}}},~~2|j|\leq m.
\end{align}
Then
\begin{align*}
\|F_z\|_{D(s-\rho,r)}
&\leq \|\sum_{0<|k|\leq K_+,~2|\imath|+|\jmath|\leq m}B_{\imath\jmath\ell}^{-1}\partial_z(p_{k\imath\jmath\ell}z^\jmath)y^{\imath}w^{\ell_1}\bar w^{\ell_2}\|_{D(s-\rho,r)}\\
&\leq
\|\sum_{0<|k|\leq K_+,~2|\imath|+|\jmath|\leq m}\frac{(1+|k|)^{1+(2b)^{\imath+\jmath+\ell}\tau}}{(\gamma\langle\ell\rangle_d)^{(2b)^{\imath+\jmath+\ell}}}|\partial_z(p_{k\imath\jmath\ell}z^{\jmath})y^{\imath}w^{\ell_1}\bar w^{\ell_2}|{\rm e}^{|k|(s-\rho)}\|_{D(s-\rho,r)}\\
&\leq(\sum_{0<|k|\leq K_+,~2|\imath|+|\jmath|\leq m}(\frac{(1+|k|)^{1+(2b)^{\imath+\jmath+\ell}\tau}}{(\gamma\langle\ell\rangle_d)^{(2b)^{\imath+\jmath+\ell}}})^2{\rm e}^{-2|k|\rho})^{\frac{1}{2}}\\
&~~~~(\sum_{0<|k|\leq K_+,2|\imath|+|\jmath|\leq m}|\partial_z(p_{k\imath\jmath}z^{\jmath})y^\imath w^{\ell_1}\bar w^{\ell_2}|^2{\rm e}^{2|k|s})^{\frac{1}{2}}\\
&\leq \frac{A_\rho}{\gamma^{(2b)^{m+2}}}\|R_z\|_{D(s,r)},
\end{align*}
i.e.,
\begin{align}\label{Fz}
\frac{\|F_z\|_{D(s-\rho,r)}}{r^{a-1}}&\leq\frac{A_\rho}{r^{a-1}\gamma^{(2b)^{m+2}}}\|R_z\|_{D(s,r)}\leq\frac{A_\rho}{r^{a-1}}\gamma^{(2b)^{m+2}}r^{m-1}\eta^m.
\end{align}
To control the Lipschitz semi-norm of $F_z$. Let $\Delta=\Delta_{\xi\zeta}$ for $\xi$, $\zeta\in\Pi$. Note that
\begin{align*}
\Delta F_z&=\sum_{0<|k|\leq K_+,~2|\imath|+|\jmath|\leq m,~|\ell|\leq2}(\Delta\partial_z(F_{k\imath\jmath\ell}z^\jmath)y^\imath w^{\ell_1}\bar w^{\ell_2}\\
&=\sum_{0<|k|\leq K_+,~2|\imath|+|\jmath|\leq m,~|\ell|\leq2}\Delta B_{\imath\jmath\ell}^{-1}\partial_z(p_{k\imath\jmath\ell}z^\jmath)y^\imath w^{\ell_1}\bar w^{\ell_2}\\
&=\sum_{0<|k|\leq K_+,~2|\imath|+|\jmath|\leq m,~|\ell|\leq2}(B_{\imath\jmath\ell}^{-1}(\xi)\Delta \partial_z(p_{k\imath\jmath\ell}z^\jmath)+\Delta B_{\imath\jmath\ell}^{-1}\partial_z(p_{k\imath\jmath\ell}(\zeta)z^\jmath))y^\imath w^{\ell_1}\bar w^{\ell_2}{\rm e}^{\sqrt{-1}\langle k,x\rangle}\\
&=:U1+U2,
\end{align*}
where
\begin{align*}
U1&=\sum_{0<|k|\leq K_+,~2|\imath|+|\jmath|\leq m,~|\ell|\leq2}B_{\imath\jmath\ell}^{-1}(\xi)\Delta \partial_z(p_{k\imath\jmath\ell}z^\jmath) y^\imath w^{\ell_1}\bar w^{\ell_2}{\rm e}^{\sqrt{-1}\langle k,x\rangle},\\
U2&=\sum_{0<|k|\leq K_+,~2|\imath|+|\jmath|\leq m,~|\ell|\leq2}\Delta B_{\imath\jmath\ell}^{-1}\partial_z(p_{k\imath\jmath\ell}(\zeta)z^\jmath) y^\imath w^{\ell_1}\bar w^{\ell_2}{\rm e}^{\sqrt{-1}\langle k,x\rangle}.
\end{align*}
Notice by (\ref{B-}) that
\begin{align*}
\|U1\|_{D(s-\rho,r)}&\leq \sum_{0<|k|\leq K_+,~2|\imath|+|\jmath|\leq m,~|\ell|\leq2}\frac{(1+|k|)^{(2b)^{\imath+\jmath+\ell}\tau}}{(\gamma\langle\ell\rangle_d)^{(2b)^{\imath+\jmath+\ell}}}|\Delta \partial_z(p_{k\imath\jmath\ell}z^\jmath) y^\imath w^{\ell_1}\bar w^{\ell_2}|{\rm e}^{|k|(s-\rho)}\\
&\leq \frac{A_\rho}{\gamma^{(2b)^{m+2}}}\|\Delta R_z\|_{D(s,r)},
\end{align*}
and
\begin{align*}
\|U2\|_{D(s-\rho,r)}&\leq \sum_{0<|k|\leq K_+,~2|\imath|+|\jmath|\leq m,~|\ell|\leq2}M\frac{(1+|k|)^{(2b)^{\imath+\jmath+\ell}\tau}}{(\gamma\langle\ell\rangle_d)^{(2b)^{\imath+\jmath+\ell}}}\partial_z(p_{k\imath\jmath\ell}(\zeta)z^\jmath) y^\imath w^{\ell_1}\bar w^{\ell_2}{\rm e}^{|k|(s-\rho)}\\
&\leq \frac{MA_\rho}{\gamma^{(2b)^{m+2}}}\|R_z\|_{D(s,r)},
\end{align*}
where $M=|\omega|_{\Pi}^\mathcal{L}+\pmb|\Omega\pmb|_{-\delta,\Pi}^\mathcal{L}$.
Then
\begin{align*}
\|\Delta F_z\|_{D(s-\rho,r)}&\leq \frac{A_\rho}{\gamma^{(2b)^{m+2}}}(\|\Delta R_z\|_{D(s,r)}+{M}\|R_z\|_{D(s,r)}).
\end{align*}
Dividing by $|\xi-\zeta|$ and taking the supremum over $\xi\neq\zeta$ in $\Pi$ we arrive at
\begin{align}\label{FzL}
\frac{1}{r^{a-1}}\|F_z\|_{D(s-\rho,r)}^{\mathcal{L}}&\leq\frac{A_\rho}{\gamma^{(2b)^{m+2}}}(\pmb\|X_R\pmb\|^{\mathcal{L}}+{M}\pmb\|X_R\pmb\|_{D(s,r)})\\
&\leq \frac{MA_\rho}{\gamma^{(2b)^{m+2}}}(\gamma^{(2b)^{m+2}}r^{m-a+1}\eta^m+\gamma^{2(2b)^{m+2}}r^{m-a}\eta^m)\notag\\
&\leq \frac{MA_\rho}{\gamma^{(2b)^{m+2}}} \gamma^{(2b)^{m+2}}r^{m-a}\eta^m.\notag
\end{align}
Similarly, we have
\begin{align}\label{Fy}\frac{|F_y|_{D(s-\rho,r)}}{r^{a-2}}\leq cA_\rho r^{m-a}\eta^m,~~~~\frac{1}{r^{a-2}}|F_y|_{D(s-\rho,r)}^{\mathcal{L}}\leq c\frac{MA_\rho}{\gamma^{(2b)^{m+2}}} r^{m-a}\eta^m,
\end{align}
\begin{align}\label{Fx}\frac{|F_x|_{D(s-\rho,r)}}{r^a}\leq cA_\rho r^{m-a}\eta^m,~~~~\frac{1}{r^{a}}|F_x|_{D(s-\rho,r)}^{\mathcal{L}}\leq c\frac{MA_\rho}{\gamma^{(2b)^{m+2}}} r^{m-a}\eta^m.
\end{align}
Next we estimate $\|F_{w}\|_{\bar{p}}$, $\|F_{\bar w}\|_{\bar{p}}$.
Using the Lemma 1 in \cite{poschel2}, we have \begin{align}\label{Fw}\|F_{w}\|_{\bar{p}}\leq cA_\rho r^{m-a}\eta^m,~~~~\|F_{w}\|_{\bar{p}}^{\mathcal{L}}\leq c\frac{MA_\rho}{\gamma^{(2b)^{m+2}}} r^{m-a}\eta^m.
\end{align}
\begin{align}\label{Fbw}\|F_{\bar w}\|_{\bar{p}}\leq cA_\rho r^{m-a}\eta^m,~~~~\|F_{\bar w}\|_{\bar{p}}^{\mathcal{L}}\leq c\frac{MA_\rho}{\gamma^{(2b)^{m+2}}} r^{m-a}\eta^m.
\end{align}
Hence, in view of (\ref{Fz}), (\ref{FzL}),  (\ref{Fy}), (\ref{Fx}), (\ref{Fw}) and (\ref{Fbw}),
\begin{align*}
\pmb\|X_{F}\pmb\|_{r,D(s-\rho,{r})}+\frac{\gamma^{(2b)^{m+2}}}{M}\pmb\|X_F\pmb\|_{r,D(s-\rho,{r})}^{\mathcal{L}}
&\leq cA_\rho r^{m-a}\eta^m.
\end{align*}
{{By the generalized Cauchy estimate, we have
\begin{align*}
\pmb\|DX_F\pmb\|_{r,r,D(s-2\rho,\frac{r}{2})}<\frac{2^a}{\rho r^a}\pmb\|X_F\pmb\|_{r,D(s-\rho,r)},
\end{align*}
where on the left we use the operator norm
\begin{align*}
\pmb\|L\pmb\|_{r,r'}=\sup_{W\neq0}\frac{\pmb\|LW\pmb\|_{\bar p,r}}{\pmb\|W\pmb\|_{p,r'}}.
\end{align*}}}

The proof is complete.}

\end{proof}

Next we apply the above transformation $\phi_F^1$ to Hamiltonian $H$, i.e.,
\begin{align*}
H\circ\phi_F^1&=(N+R)\circ\phi_F^1+(P-R)\circ\phi_F^1\\
&=(N+R)+\{N,F\}+\int_0^1\{(1-t)\{N,F\}+ R,F\}\circ\phi_F^tdt+(P-R)\circ\phi_F^1\\
&=N+[R]+\int_0^1\{R_t,F\}\circ\phi_F^tdt+(P-R)\circ\phi_F^1+Q\\
&=:\bar N_++\bar P_+,
\end{align*}
and
\begin{align}\label{eq15}
\bar N_+&= N+[R]\notag\\
&=e+\langle\omega,y\rangle+\langle\Omega w,\bar w\rangle+g(z)+f(y,z,w,\bar w)+[R](y,z,w,\bar w),\\\label{eq16}
\bar P_+&=\int_0^1\{R_t,F\}\circ\phi_F^tdt+(P-R)\circ\phi_F^1+Q,\\
R_t&=(1-t)Q+(1-t)[R]+tR.\notag
\end{align}

\subsubsection{Translation}
In this subsection, we will eliminate the first order items of $z$.
Consider the translation
$$\phi:x\rightarrow x,~~~~~y\rightarrow y,~~~~~z\rightarrow z+\zeta_+-\zeta,$$
where $z=(w_0,\bar w_0)^\top$,  and $\zeta_+\in B_{ r}(\zeta)$ is to be determined.
Let
$$\Phi_+=\phi_F^1\circ\phi.$$
Then
\begin{align}
H\circ\Phi_+&=N_++P_+,\notag\\\label{eq17}
N_+&=\bar N_+\circ\phi,\\\label{eq18}
P_+&=\bar P_+\circ\phi,
\end{align}
with
\begin{align*}
N_+&=\bar{N}_+\circ\phi=(N+[R])\circ\phi\\
&=(e+\langle\omega,y\rangle+\langle\Omega w,\bar w\rangle+g(z)+f(y,z,w,\bar w)+[R](y,z,w,\bar w))\circ\phi\\
&=e+\langle\omega,y\rangle+\langle\Omega w,\bar w\rangle+g(z+\zeta_+-\zeta)+f(y,z+\zeta_+-\zeta,w,\bar w)+[R](y,z+\zeta_+-\zeta,w,\bar w)\\
&=:e_++\langle\omega_+,y\rangle+\langle\Omega_+ w,\bar w\rangle +g_++f_+,
\end{align*}
where
\begin{align}\label{e+}
e_+&=e+g(\zeta_+-\zeta)+[R](0,\zeta_+-\zeta,0,0),\\\label{omega+}
\omega_+&=\omega+\partial_y[R](0,0,0,0),\\\label{Omega+}
\Omega_+&=\Omega+\partial_{w,\bar w}[R](0,0,0,0),\\\label{g+}
g_+&={g(z+\zeta_+-\zeta)-g(\zeta_+-\zeta)+[R](0,z+\zeta_+-\zeta,0,0)-[R](0,\zeta_+-\zeta,0,0),}\\\label{f+}
f_+&=f(y,z+\zeta_+-\zeta,w,\bar w)+[R](y,z+\zeta_+-\zeta,w,\bar w)-[R](0,z+\zeta_+-\zeta,0,0)\\
&~~~~-\langle\partial_y[R](0,0,0,0),y\rangle-\langle\partial_{w,\bar w}[R](0,0,0,0)w,\bar w\rangle.\notag
\end{align}

\subsubsection{Eliminate the first order terms}\label{326}
In this subsection, we will appropriately choose $\zeta_+$ such that the first order terms about $z$ disappear. The concrete details see the following lemma.
\begin{lemma}\label{le3}
Let
\begin{align}\label{eq47}
\nabla g_+(0)=\nabla g(\zeta_+-\zeta)+\nabla_z[R](0,\zeta_+-\zeta,0,0).
\end{align}
There exists
$\zeta_+\in B_{ (r_-^{m-1}\eta_-^m)^\frac{1}{L}}(\zeta)$
such that
\begin{align*}
\nabla g_+(0)=\nabla g(0)=\cdots=\nabla g_0(0)=0.
\end{align*}
\end{lemma}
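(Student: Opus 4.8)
The plan is to set up a fixed-point / topological-degree argument for the equation $\nabla g_+(0)=0$. Writing $\zeta_+ = \zeta + v$ with $v$ the unknown increment, equation (\ref{eq47}) reads
\[
\nabla g(v) + \nabla_z[R](0,v,0,0) = 0 .
\]
Since by the inductive normalization $\nabla g(\zeta-\zeta)=\nabla g_\nu(0)=\cdots=\nabla g_0(0)=0$ (the previous translations were chosen precisely so that the first-order $z$-terms vanished at each step), we have $\nabla g(0)=0$, i.e. $v=0$ solves the unperturbed equation, and the correction $\nabla_z[R](0,v,0,0)$ is the small perturbation. The idea is to show this perturbed equation still has a zero in a small ball whose radius is governed by the weak convexity exponent $L$ from \textbf{(A0)}.

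First I would estimate the size of the perturbation term. By Lemma \ref{le1} (the truncation estimate) and the definition of $\pmb\|\cdot\pmb\|$, $[R]$ inherits the bound $\pmb\|X_{[R]}\pmb\|\leq \pmb\|X_R\pmb\|\leq c\,\pmb\|X_P\pmb\| = c\,\gamma^{2(2b)^{m+2}}r^{m-a}\eta^m$, so Cauchy estimates give $\|\nabla_z[R](0,\cdot,0,0)\|_{\textsf{a},\bar{\textsf{p}}} \lesssim \gamma^{2(2b)^{m+2}}r^{m-1}\eta^m$ on a ball of radius $\sim r$ around $\zeta$ in the $z$-variable, with the same bound on its Lipschitz derivative in $v$. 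Next, using the weak convexity condition in \textbf{(A0)}, $\|\nabla g(v)-\nabla g(0)\|_{\textsf{a},\bar{\textsf{p}}}\geq\sigma\|v\|_{\textsf{a},\textsf{p}}^L$, so on the sphere $\|v\|_{\textsf{a},\textsf{p}} = \rho_*$ with $\rho_* := (c\,\gamma^{2(2b)^{m+2}}r^{m-1}\eta^m/\sigma)^{1/L}$ — which is $\lesssim (r^{m-1}\eta^m)^{1/L}$, matching the ball $B_{(r_-^{m-1}\eta_-^m)^{1/L}}(\zeta)$ in the statement after reindexing — the unperturbed term $\nabla g(v)$ dominates the perturbation $\nabla_z[R](0,v,0,0)$ strictly in norm. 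Combined with the degree condition $\deg(\nabla g - \nabla g(\zeta_0),O^o,0)\neq 0$ from \textbf{(A0)}, a homotopy argument (homotopying $\nabla g(v) + t\,\nabla_z[R](0,v,0,0)$, $t\in[0,1]$, with no zeros on the boundary sphere by the domination just established) preserves the degree, so the full map $v\mapsto \nabla g(v) + \nabla_z[R](0,v,0,0)$ has nonzero degree on the ball and hence a zero $v_* = \zeta_+-\zeta$ inside it. This zero gives $\nabla g_+(0)=0$, and the chain $\nabla g(0)=\cdots=\nabla g_0(0)=0$ follows by the induction hypothesis.

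The main obstacle I anticipate is two-fold. First, one must make sure the radius $\rho_*$ is small enough that the ball $B_{\rho_*}(\zeta)$ stays inside the domain $O$ (so that \textbf{(A0)} applies) and inside the analyticity/estimate domain $D(s,r)$ in the $z$-slot — this is where the precise choice of the exponent $m \geq L + \tfrac{\sqrt{4L^2+2L}}{2}$ enters, since it forces $r^{m-1}\eta^m$ (hence $\rho_* \sim (r^{m-1}\eta^m)^{1/L}$) to be much smaller than $r$, keeping the translation admissible $\phi_F^1\circ\phi:D(s_+,r_+)\to D(s,r)$. Second, because $\nabla g$ need not be a homeomorphism (it is degenerate — that is the whole point), uniqueness of $\zeta_+$ is not available and one genuinely needs the topological-degree machinery rather than an implicit function theorem or contraction mapping; the degree must be shown to be stable under the perturbation, which requires the strict inequality $\|\nabla_z[R](0,v,0,0)\|_{\textsf{a},\bar{\textsf{p}}} < \sigma\|v\|_{\textsf{a},\textsf{p}}^L = \|\nabla g(v)-\nabla g(0)\|_{\textsf{a},\bar{\textsf{p}}}$ on the whole boundary sphere $\|v\|_{\textsf{a},\textsf{p}}=\rho_*$, which is exactly what the choice of $\rho_*$ above guarantees (with a small numerical margin to spare).
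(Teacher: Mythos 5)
Your core argument is the same as the paper's: bound $\nabla_z[R](0,\cdot,0,0)$ by $\gamma^{2(2b)^{m+2}}r^{m-1}\eta^m$ via the truncation estimate, use the weak convexity lower bound to dominate it on a boundary where $\|z-\zeta\|$ is not too small, invoke homotopy invariance of the degree for $\nabla g(z-\zeta)-\nabla g(0)+t\nabla_z[R](0,z-\zeta,0,0)$, and conclude that a zero $\zeta_+$ exists in a ball of radius $\sim(r_-^{m-1}\eta_-^m)^{1/L}$. Two things are missing, one minor and one substantive. The minor one: the degree hypothesis in \textbf{(A0)} is stated on $O^o$, not on your small ball $B_{\rho_*}$, so you need an explicit excision step to transfer the nonzero degree to the ball; the paper does this by first running the homotopy on all of $O^o$ (no zeros on $\partial O$ since $\|H_t(z)\|_{\bar{\textsf{p}}}\geq\sigma\delta^L/2$ there) and then showing $\|H_1(z)\|_{\bar{\textsf{p}}}\geq\frac{\sigma}{2}r_-^{m-1}\eta_-^m$ for all $z\in O\setminus B_{(r_-^{m-1}\eta_-^m)^{1/L}}(\zeta)$, so that excision localizes the zero. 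You gesture at this but never rule out zeros of the homotopy between your sphere and $\partial O$, which is what excision is for.

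The substantive gap is the induction. The lemma is invoked at every KAM step, and at step $\nu$ the function called $g$ is $g_\nu$, which has already absorbed the averages $[R]$ from all previous steps; assumption \textbf{(A0)} applies verbatim only to $g_0$. The paper's proof is therefore an induction whose second half, after producing $\zeta_+$, re-establishes the two hypotheses for $g_+$: the degree condition $\deg(\nabla g_+(\cdot)-\nabla g_+(0),O^o,0)\neq0$ (via the smallness of $\nabla g_+-\nabla g$, which is $O((r_-^{m-1}\eta_-^m)^{1/L})$), and a \emph{degraded} weak convexity $\|\nabla g_+(z)-\nabla g_+(z_*)\|_{\bar{\textsf{p}}}\geq\sigma_+\|z-z_*\|_{\textsf{p}}^L$ with $\sigma_+=\frac{\sigma}{2}+\frac{\sigma_0}{4}$, valid only for $z\in O\setminus B_{(r^{m-1}\eta^m)^{1/L}}(z_*)$. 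Without carrying these along, your argument cannot be repeated at the next step; moreover your choice of radius $\rho_*\sim(\gamma^{2(2b)^{m+2}}r^{m-1}\eta^m/\sigma)^{1/L}$, computed with the full-strength constant $\sigma$ and current-step parameters, is comparable to (or smaller than) the excluded radius $(r^{m-1}\eta^m)^{1/L}$ inside which the inherited convexity gives no lower bound at all, so the domination on your sphere is not justified at steps $\nu\geq1$. This is exactly why the lemma states the larger, previous-step radius $(r_-^{m-1}\eta_-^m)^{1/L}$.
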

\begin{proof}
The proof will be completed by an induction on $\nu$. We start with the case $\nu=0$. It follows from $g(z)=o(\|z\|_{\textsf{a},\textsf{p}}^2)$  and \textbf{(A0)} that
\begin{align}\label{0}
&\nabla g_0(\zeta_0)=\nabla g_0(0)=0,~~\zeta_0\in O\\\label{degg0}
&\deg(\nabla g_0(\cdot)-\nabla g_0(0),O^o,0)\neq0,\\\label{nag0}
&\|\nabla g_0(z)-\nabla g_0(z_*)\|_{\bar{p}}\geq\sigma_0\|z-z_*\|_{{p}}^L,~~z,z_*\in O.
\end{align}
Now assume that for some $\nu\geq1$ we have got
\begin{align}\label{1}
&\nabla g_i(0)=\nabla g_{i-1}(0)=0,~~\zeta_i\in B_{ (r_{i-2}^{m-1}\eta^m_{i-2})^\frac{1}{L}}(\zeta_{i-1}),\\\label{degg}
&\deg(\nabla g_i(\cdot)-\nabla g_i(0),O^o,0)\neq0,\\\label{nag}
&\|\nabla g_i(z)-\nabla g_i(z_*)\|_{\bar{p}}\geq\sigma_i\|z-z_*\|_{{p}}^L, ~~z\in O\backslash B_{(r^{m-1}\eta^m)^{\frac{1}{L}}}(z_*), z_*\in O,
\end{align}
where $i=1,2,\cdots,\nu.$ Then we need to find $\zeta_+$ near $\zeta$ such that $\nabla g_+(0)=\nabla g(0)$.

Consider homotopy $H_t(z):[0,1]\times O\rightarrow \ell^{a,\bar{p}}\times\ell^{a,\bar{p}}$,
\begin{align*}
H_t(z)&=:\nabla g(z-\zeta)-\nabla g(0)+t\nabla_z[R](0,z-\zeta,0,0).
\end{align*}
Notice by (\ref{XR}) that
\begin{align}\label{ezr}
&\|\nabla_z[R](y,z,w,\bar w)\|_{\bar{p}}\leq r^{a-1}\pmb\|X_R\pmb\|_{r,D(s-\rho,8\eta r)}\leq\gamma^{2(2b)^{m+2}}r^{m-1}\eta^m.
\end{align}
For any $z\in\partial O$, $t\in[0,1]$, by (\ref{nag}) and (\ref{ezr}), we have
\begin{align*}
\|H_t(z)\|_{\bar{p}}
&\geq\|\nabla g(z-\zeta)-\nabla g(0)\|_{\bar{p}}-\|\nabla_z[R](0,z-\zeta,0,0)\|_{\bar{p}}\\
&\geq\sigma\|z-\zeta\|_{p}^L-\gamma^{2(2b)^{m+2}}r^{m-1}\eta^m\\
&>\frac{\sigma\delta^L}{2},
\end{align*}
where $\delta:=\min\{\|z-\zeta\|_{p}, \forall z\in\partial O\}$.
So, it follows from the homotopy invariance and (\ref{degg}) that
\begin{align}\label{H1}
\deg(H_1(\cdot),O^o,0)=\deg(H_0(\cdot),O^o,0)\neq0.
\end{align}
We note by (\ref{nag}) and (\ref{ezr}) that for any $z\in O\backslash B_{(r_{-}^{m-1}\eta_{-}^m)^{\frac{1}{L}}}(\zeta)$,
\begin{align*}
\|H_1(z)\|_{\bar{p}}&=\|\nabla g(z-\zeta)-\nabla g(0)+\nabla_z[R](0,z-\zeta,0,0)\|_{\bar{p}}\\
&\geq\|\nabla g(z-\zeta)-\nabla g(0)\|_{\bar{p}}-\|\nabla_z[R](0,z-\zeta,0,0)\|_{\bar{p}}\\
&\geq\sigma\|z-\zeta\|_p^L-\gamma_0^{2(2b)^{m+2}}r^{m-1}\eta^m\\
&\geq\sigma r_{-}^{m-1}\eta_{-}^m-\gamma_0^{2(2b)^{m+2}}r^{m-1}\eta^m\\
&\geq\frac{\sigma}{2}r_{-}^{m-1}\eta_{-}^m.
\end{align*}
Hence by excision and (\ref{H1}),
\begin{align*}
\deg(H_1(\cdot),B_{ (r_{-}^{m-1}\eta_{-}^m)^{\frac{1}{L}}}(\zeta),0)=\deg(H_1(\cdot),O^o,0)\neq0,
\end{align*}
then there exist at least a $\zeta_+\in B_{ (r_{-}^{m-1}\eta_{-}^m)^{\frac{1}{L}}}(\zeta)$, such that
\begin{align*}
H_1(\zeta_+)=0,
\end{align*}
i.e.,
\begin{align*}
\nabla g(\zeta_+-\zeta)+\nabla_z[R](0,\zeta_+-\zeta,0,0)=\nabla g(0),
\end{align*}
thus
\begin{align}\label{g+=g0}
\nabla g_+(0)=\nabla g(0)=\cdots=\nabla g_0(0)=0.
\end{align}

Next we need to prove
\begin{align}\label{degg+}
&\deg(\nabla g_+(\cdot)-\nabla g_+(0),O^o,0)\neq0,\\\label{nag+}
&\|\nabla g_+(z)-\nabla g_+(z_*)\|_{\bar{p}}\geq\sigma_+\|z-z_*\|_{{p}}^L.
\end{align}
By (\ref{g+}),
\begin{align*}
\nabla g_+(z)=\nabla g(z+\zeta_+-\zeta)+\nabla[R](0,z+\zeta_+-\zeta,0,0).
\end{align*}
Then
\begin{align}\label{g+-g}
\nabla g_+(z)-\nabla g(z)&=\nabla g(z+\zeta_+-\zeta)-\nabla g(z)+\nabla[R](0,z+\zeta_+-\zeta,0,0),
\end{align}
and
\begin{align}\label{g+-g+}
\nabla g_+(z)-\nabla g_+(z_*)&=\nabla g(z+\zeta_+-\zeta)-\nabla g(z_*+\zeta_+-\zeta)+\nabla[R](0,z+\zeta_+-\zeta,0,0)\\
&~~~-\nabla[R](0,z_*+\zeta_+-\zeta,0,0).\notag
\end{align}
In view of (\ref{ezr}), (\ref{g+-g}), and $\zeta_+\in B_{(r_{-}^{m-1}\eta_{-}^m)^\frac{1}{L}}(\zeta)$, we get
\begin{align}\label{nag+-nag}
\|\nabla g_+(z)-\nabla g(z)\|_{\bar{p}}\leq c(r_{-}^{m-1}\eta_{-}^m)^\frac{1}{L},
\end{align}
so, it follows from the property of degree, (\ref{degg}) and (\ref{g+=g0}) that (\ref{degg+}) holds, i.e.,
\begin{align*}
\deg(\nabla g_+(\cdot)-\nabla g_+(0),O^o,0)=\deg(\nabla g_+(\cdot)-\nabla g(\cdot)+\nabla g(\cdot)-\nabla g(0),O^o,0)\neq0.
\end{align*}
According to (\ref{nag}), (\ref{ezr}) and (\ref{g+-g+}), we have for any $z\in O\backslash B_{(r^{m-1}\eta^m)^{\frac{1}{L}}}(z_*)$,
\begin{align*}
\|\nabla g_+(z)-\nabla g_+(z_*)\|_{\bar{p}}\geq\sigma\|z-z_*\|_{{p}}^L-4\gamma_0^{2(2b)^{m+2}}r^{m-1}\eta^m\geq\sigma_+\|z-z_*\|_{{p}}^L,
\end{align*}
which implies (\ref{nag+}).

The proof is complete.
\end{proof}

\subsubsection{Frequency Property}
In view of (\ref{omega+}), (\ref{Omega+}) and $\pmb\|X_{[R]}\pmb\|\leq c\pmb\|X_{P}\pmb\|$, we can get $|\omega_+-\omega|<c\pmb\|X_P\pmb\|_r$ and $\|(\Omega_+-\Omega)w\|_{\bar p}<cr^a\pmb\|X_P\pmb\|_r$ on $D(s,r)$, hence $\pmb|\Omega_+-\Omega\pmb|_{\bar p-p}<c\pmb\|X_P\pmb\|_r$ on $\Pi$. The same holds for their Lipschitz semi-norms with $-\delta\leq\bar p-p$, and we get
\begin{align}\label{om+-om}
|\omega_+-\omega|+\pmb|\Omega_+-\Omega\pmb|_{-\delta}<c\pmb\|X_P\pmb\|_r,~~|\omega_+-\omega|^{\mathcal{L}}+\pmb|\Omega_+-\Omega\pmb|_{-\delta}^{\mathcal{L}}<c\pmb\|X_P\pmb\|_r^{\mathcal{L}}.
\end{align}
In order to bound the small divisors for the new frequencies $\omega_+$ and $\Omega_+$ for $|k|<K_+$, we observe that $|\ell|_\delta\leq|\ell|_{d-1}\leq 2\langle\ell\rangle_d$, hence
$$|\langle k,\omega_+-\omega\rangle+\langle\ell,\Omega_+-\Omega\rangle|\leq|k||\omega_+-\omega|+|\ell|_\delta\pmb|\Omega_+-\Omega\pmb|_{-\delta}<K_+\langle\ell\rangle_d\pmb\|X_P\pmb\|_r\leq(\gamma-\gamma_+)\frac{\langle\ell\rangle_d}{(1+|k|)^\tau},$$
where $\gamma-\gamma_+>cK_+\max_{|k|\leq K_+}(1+|k|)^\tau\pmb\|X_P\pmb\|_r$.
The new ones then satisfy
\begin{align*}
|\langle k,\omega_+\rangle+\langle \ell,\Omega_+ \rangle|\geq \gamma_+\frac{\langle\ell\rangle_d}{A_k}
\end{align*}
on $\Pi$.

\subsubsection{Estimate on $\Phi_+$}
\begin{lemma}\label{le7}
In addition to \textbf{\textsc{(H1)}}-\textbf{\textsc{(H2)}}. Assume that
\begin{align*}
&\textbf{\textsc{(H3)}}~{A_\rho}r^{m-1}\eta^{m}<\rho,\\
&\textbf{\textsc{(H4)}}~A_\rho r^{m-2a}\eta^{m-a}<1.
\end{align*}

Then the following hold.
\begin{itemize}
\item[(1)]For all $0\leq t\leq 1$,
\begin{align}\label{eq26}
\phi_F^t&:D(s-5\rho,2\eta r)\rightarrow D(s-4\rho,4\eta r),\\\label{eq27}
\phi&:D(s-6\rho,\eta r)\rightarrow D(s-5\rho,2\eta r),
\end{align}
are well defined.
\item[(2)]$\Phi_+:D_+=D(s_+,r_+)=D(s-6\rho,\eta r)\rightarrow D(s,r).$
\item[(3)]There is a constant $c_3$ such that
\begin{align*}
\pmb\|\phi_F^t-id\pmb\|^{*}_{r,D(s-5\rho,2\eta r)}&\leq c_3\pmb\|X_F\pmb\|_{r,D(s-2\rho,4\eta r)}^*\leq c_3A_\rho r^{m-a}\eta^m,\\
\pmb\|D\phi_F^t-Id\pmb\|^{*}_{r,r,D(s-6\rho,\eta r)}&\leq c_3\frac{1}{\rho r^a}\pmb\|X_F\pmb\|_{r,D(s-2\rho,4\eta r)}^*\leq c_3\frac{A_\rho}{\rho}r^{m-2a}\eta^{m-a}.
\end{align*}
\item[(4)]
\begin{align*}
\pmb\|\Phi_+-id\pmb\|^{*}_{r,D(s-5\rho,2\eta r)}&\leq c_3\pmb\|X_F\pmb\|_{r,D(s-6\rho,4\eta r)}^*\leq c_3A_\rho r^{m-a}\eta^m,\\
\pmb\|D\Phi_+-Id\pmb\|^{*}_{r,r,D(s-4\rho,\eta r)}&\leq c_3\frac{1}{\rho r^a}\pmb\|X_F\pmb\|_{r,D(s-2\rho,4\eta r)}^*\leq c_3\frac{A_\rho}{\rho}r^{m-2a}\eta^{m-a}.
\end{align*}
\end{itemize}
\end{lemma}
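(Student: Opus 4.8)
The plan is to treat the two transformations $\phi_F^t$ (the time-$t$ flow of the Hamiltonian vector field $X_F$) and $\phi$ (the translation in $z$) separately, establish the mapping properties (1)–(2) by standard flow/domain estimates, and then obtain (3)–(4) from the integral representation of the flow together with the estimates on $X_F$ and $DX_F$ already proved in Lemma \ref{le2}. Throughout I would work with the weighted norm $\pmb\|\cdot\pmb\|_r$ and its Lipschitz counterpart simultaneously, writing $\pmb\|\cdot\pmb\|_r^*$ for either.

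First I would set up the flow estimate. For $W\in D(s-4\rho,4\eta r)$ one has $\phi_F^t(W)=W+\int_0^t X_F(\phi_F^\sigma(W))\,d\sigma$, so as long as the orbit stays in the domain where $X_F$ is estimated, $\pmb\|\phi_F^t(W)-W\pmb\|_r\le \pmb\|X_F\pmb\|_{r,D(s-2\rho,4\eta r)}\le c_2 A_\rho r^{m-a}\eta^m$ by Lemma \ref{le2}. Hypothesis \textbf{(H3)}, $A_\rho r^{m-1}\eta^m<\rho$, controls the $x$-component displacement (which is measured with weight $r^{a-2}$, and $a\ge 2$, so the displacement in $\mathrm{Im}\,x$ is $\lesssim A_\rho r^{m-a}\eta^m\cdot r^{a-2}=A_\rho r^{m-2}\eta^m<\rho$), giving room to move from width $s-4\rho$ to $s-5\rho$; hypothesis \textbf{(H4)}, $A_\rho r^{m-2a}\eta^{m-a}<1$, controls the $(y,z,w,\bar w)$-components so that a point in the smaller polydisc $2\eta r$ stays inside $4\eta r$ (the displacement in each of these, after unweighting, is $\lesssim A_\rho r^{m-a}\eta^m$ times the appropriate power of $r$, which is $\ll \eta r$ precisely because of \textbf{(H4)} together with $m-a\geq a$ coming from the choice of $a$ in \eqref{a}). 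A continuity/bootstrap argument in $t$ then upgrades this to: the full orbit for $0\le t\le 1$ starting in $D(s-5\rho,2\eta r)$ stays in $D(s-4\rho,4\eta r)$, which is \eqref{eq26}. For the translation $\phi:z\mapsto z+\zeta_+-\zeta$, Lemma \ref{le3} gives $\zeta_+\in B_{(r_-^{m-1}\eta_-^m)^{1/L}}(\zeta)$, so $\|\zeta_+-\zeta\|_{\textsf a,\textsf p}\le (r_-^{m-1}\eta_-^m)^{1/L}$; since $r=\eta_- r_-$ and $m-1\geq L$ (as $m\ge L+\tfrac12\sqrt{4L^2+2L}>L+1$), this is $\ll \eta r$, so $\phi$ maps $D(s-6\rho,\eta r)$ into $D(s-5\rho,2\eta r)$, which is \eqref{eq27}; here the $x$- and $y$-components are untouched so only the $z$-polydisc needs checking. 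Composing \eqref{eq26} (at the endpoint $t=1$) with \eqref{eq27} gives (2): $\Phi_+=\phi_F^1\circ\phi:D_+\to D(s,r)$.

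For part (3), the bound $\pmb\|\phi_F^t-\mathrm{id}\pmb\|_{r,D(s-5\rho,2\eta r)}^*\le c_3\pmb\|X_F\pmb\|_{r,D(s-2\rho,4\eta r)}^*$ is immediate from the integral formula above (the Lipschitz-in-$\xi$ version follows by differencing the integrand, using that the orbits for parameters $\xi$ and $\zeta$ both stay in the estimate domain), and then $\le c_3 A_\rho r^{m-a}\eta^m$ by Lemma \ref{le2}. For the derivative bound, differentiate the integral equation: $D\phi_F^t=\mathrm{Id}+\int_0^t DX_F(\phi_F^\sigma)\,D\phi_F^\sigma\,d\sigma$, and apply Gronwall together with the generalized Cauchy estimate $\pmb\|DX_F\pmb\|_{r,r,D(s-2\rho,r/2)}\le c_2(\rho r^a)^{-1}\pmb\|X_F\pmb\|_{r,D(s-\rho,r)}$ from Lemma \ref{le2}; since $(\rho r^a)^{-1}\pmb\|X_F\pmb\|\lesssim \rho^{-1}A_\rho r^{m-2a}\eta^{m-a}$, which is $<1$ by \textbf{(H4)} (up to constants), the Gronwall factor is bounded and one gets $\pmb\|D\phi_F^t-\mathrm{Id}\pmb\|_{r,r}^*\le c_3\rho^{-1}A_\rho r^{m-2a}\eta^{m-a}$. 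Part (4) follows by combining (3) with the (trivial) estimates for $\phi$: $\phi-\mathrm{id}$ is the constant $\zeta_+-\zeta$, with $\pmb\|\phi-\mathrm{id}\pmb\|_r\lesssim r^{-(a-1)}\|\zeta_+-\zeta\|_{\textsf a,\bar{\textsf p}}$, which is absorbed into $c_3A_\rho r^{m-a}\eta^m$, and $D\phi=\mathrm{Id}$ exactly, so $D\Phi_+=(D\phi_F^1\circ\phi)\cdot D\phi=D\phi_F^1\circ\phi$; then $\pmb\|D\Phi_+-\mathrm{Id}\pmb\|_{r,r}^*=\pmb\|D\phi_F^1\circ\phi-\mathrm{Id}\pmb\|_{r,r}^*\le \pmb\|D\phi_F^1-\mathrm{Id}\pmb\|_{r,r,D(s-5\rho,2\eta r)}^*$ on the appropriate domain.

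The main obstacle is the bookkeeping of the \emph{weighted} norms together with the powers of $r$ and $\eta$: one must check at each of the finitely many components $(x,y,z,w,\bar w)$ that the displacement, measured in the $\pmb\|\cdot\pmb\|_r$-weight, actually shrinks the domain by the claimed amount, and this is exactly where \textbf{(H3)} (for the $x$-width) and \textbf{(H4)} (for the polydisc radii) are needed, combined with the arithmetic relations $a\ge 2$, $m-a\ge a$ forced by the definition \eqref{a} of $a$. The Lipschitz-in-$\xi$ estimates require a little extra care because one differences flows with different parameters, but since all the relevant orbits stay in a fixed domain where the vector field and its derivative are controlled, the standard variation-of-constants argument goes through without new ideas. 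A secondary point is that the correction term $Q$ from \eqref{Q} does not enter this lemma at all — it is part of $\bar P_+$, not of the transformation — so $\Phi_+$ here is genuinely just $\phi_F^1\circ\phi$ and the estimates are purely about the generating Hamiltonian $F$ and the shift $\zeta_+-\zeta$.
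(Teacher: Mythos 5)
Your proposal is correct and follows essentially the same route as the paper: the integral representation of the flow with a bootstrap in $t$ and componentwise domain checks (with \textbf{(H3)} giving room in the $x$-width and \textbf{(H4)} in the polydisc radii), the translation absorbed via $\zeta_+\in B_{(r_-^{m-1}\eta_-^m)^{1/L}}(\zeta)$ together with $m\geq L+\frac{\sqrt{4L^2+2L}}{2}$, and (3)--(4) deduced from the flow formula and Lemma \ref{le2}. The only minor deviation is that you bound $D\phi_F^t-Id$ through the variational equation and Gronwall (and the Lipschitz part by direct differencing), whereas the paper applies the generalized Cauchy estimate to $\phi_F^t-id$ on the shrunk domain $D(s-6\rho,\eta r)$ and cites P\"oschel's Lemma A.4 for the Lipschitz semi-norm; these are equivalent standard devices.
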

\begin{proof}
(1)~~(\ref{eq27}) immediately follows from $\zeta_+\in B_{(r_{-}^{m-1}\eta_{-}^m)^\frac{1}{L}}(\zeta)$ in Lemma \ref{le3}. Indeed, for $\forall x,y,z,w,\bar w, \in D(s-6\rho,\eta r)$,  $\phi(x,y,z,w,\bar w)=(x,y,z+\zeta_+-\zeta,w,\bar w)$, then
\begin{align*}
||z+\zeta_+-\zeta||_p&<\eta r+c(r_{-}^{m-1}\eta_{-}^m)^\frac{1}{L}<2\eta r,
\end{align*}
as $m\geq L+\frac{\sqrt{4L^2+2L}}{2}$.
To verify (\ref{eq26}), we denote $\phi_{F_1}^t$, $\phi_{F_2}^t$, $\phi_{F_3}^t$, $\phi_{F_4}^t$, $\phi_{F_5}^t$ as the components of $\phi_{F}^t$ in the $x$, $y$, $z$, $w$, $\bar w$ planes, respectively. Then
\begin{align}\label{eq28}
\phi_F^t=id+\int_0^tX_F\circ \phi_F^\lambda d\lambda,~~~~0\leq t\leq 1.
\end{align}
For any $(x,y,z,w,\bar w)\in D(s-5\rho,2\eta r)$, we let $t_*=\sup\{t\in[0,1]:\phi_F^t(x,y,z,w,\bar w)\in D(s-4\rho,4\eta r)\}$. Then for any $0\leq t\leq t_*$, by the definition of $\pmb\|\cdot\pmb\|_{r,D(s,r)}$, (\ref{XP}), (\ref{XR}), (\ref{XF}), (\ref{eq28}), \textbf{(H3)} and \textbf{(H4)}, we get
\begin{align*}
|\phi_{F_1}^t(x,y,z)|_{D(s-5\rho,2\eta r)}&\leq|x|_{D(s-5\rho,2\eta r)}+\int_0^t|F_y\circ\phi_F^\lambda|_{D(s-5\rho,2\eta r)}d\lambda\\
&\leq s-5\rho+r^{a-1}\pmb\|X_F\pmb\|_{r,D(s-4\rho,4\eta r)}\\
&<s-5\rho+cA_\rho r^{m-1}\eta^m\\
&\leq s-4\rho,\\
|\phi_{F_2}^t(x,y,z)|_{D(s-5\rho,2\eta r)}&\leq|y|_{D(s-5\rho,2\eta r)}+\int_0^t|-F_x\circ\phi_F^\lambda|_{D(s-5\rho,2\eta r)}d\lambda\\
&\leq (2\eta r)^2+r^a\pmb\|X_F\pmb\|_{r,D(s-4\rho,4\eta r)}\\
&<(2\eta r)^2+c A_\rho r^m\eta^m\\
&<(4\eta r)^2,\\
|\phi_{F_3}^t(x,y,z)|_{D(s-5\rho,2\eta r)}&\leq|z|_{D(s-5\rho,2\eta r)}+\int_0^t|\tilde{J}F_z\circ\phi_F^\lambda|_{D(s-5\rho,2\eta r)}d\lambda\\
&\leq 2\eta r+r\pmb\|X_F\pmb\|_{r,D(s-4\rho,4\eta r)}\\
&\leq 2\eta r+cA_\rho r^{m-a+1}\eta^m\\
&<4\eta r,\\
|\phi_{F_4}^t(x,y,z)|_{D(s-5\rho,2\eta r)}&\leq|w|_{D(s-5\rho,2\eta r)}+\int_0^t|iF_{\bar w}\circ\phi_F^\lambda|_{D(s-5\rho,2\eta r)}d\lambda\\
&\leq (2\eta r)^a+\pmb\|X_F\pmb\|_{r,D(s-4\rho,4\eta r)}\\
&\leq(2\eta r)^a+cA_\rho r^{m-a}\eta^m\\
&<(4\eta r)^a,\\
|\phi_{F_5}^t(x,y,z)|_{D(s-5\rho,2\eta r)}&\leq|\bar w|_{D(s-5\rho,2\eta r)}+\int_0^t|-iF_{ w}\circ\phi_F^\lambda|_{D(s-5\rho,2\eta r)}d\lambda\\
&\leq (2\eta r)^a+\pmb\|X_F\pmb\|_{r,D(s-4\rho,4\eta r)}\\
&\leq(2\eta r)^a+cA_\rho r^{m-a}\eta^m\\
&<(4\eta r)^a.
\end{align*}
Thus, $\phi_F^t\in D(s-4\rho,4\eta r)\subset D(s,r)$, i.e. $t_*=1$ and (1) holds.

(2)~~It follows from (1) that (2) holds.

(3)~~By (\ref{XP}), (\ref{XR}), (\ref{XF}) and (\ref{eq28}),
\begin{align*}
\pmb\|\phi_F^t-id\pmb\|_{r,D(s-5\rho,2\eta r)}&=\pmb\|\int_0^tX_F\circ \phi_F^\lambda d\lambda\pmb\|_{r,D(s-5\rho,2\eta r)}\leq c\pmb\|X_F\pmb\|_{r,D(s-4\rho,4\eta r)}.
\end{align*}

Using Lemma A.4 in \cite{poschel2}, we have
$$\pmb\|\phi_F^t-id\pmb\|^{\mathcal{L}}_{r,D(s-5\rho,2\eta r)}\leq exp(\pmb\|DX_F\pmb\|_{D(s-4\rho,4\eta r)})\pmb\|X_F\pmb\|^{\mathcal{L}}_{D(s-4\rho,4\eta r)}\leq c_4\pmb\|X_F\pmb\|_{D(s-4\rho,4\eta r)}^{\mathcal{L}}.$$

{{By the generalized Cauchy estimate,
\begin{align}\label{Dphi-I}
\pmb\|D\phi_F^t-I\pmb\|^{*}_{r,r,D(s-6\rho,\eta r)}&<\frac{\pmb\|\phi_F^t-id\pmb\|^{*}_{r,D(s-5\rho,2\eta r)}}{\rho(\eta r)^a}<c\frac{\pmb\|X_F\pmb\|^*_{D(s-4\rho,4\eta r)}}{\rho(\eta r)^a}=c\frac{A_\rho}{\rho}r^{m-2a}\eta^{m-a}.
\end{align}}}
%

(4) now follows from (3).
\end{proof}

\subsubsection{Estimate on $P_+$}
In the following, we estimate the next step $P_+$.
\begin{lemma}\label{le8}
Assume $\textbf{\textsc{(H1)}}$-$\textbf{\textsc{(H4)}}$ and
\begin{align*}
&\textbf{\textsc{(H5)}}~ \Delta=:(\frac{A_\rho^2}{\rho^2}r^{m-2a}\eta^{-\frac{1}{2}}
+c\eta^{\frac{1}{2}}\gamma^{(2b)^{m+2}}+c\frac{A_{\rho}}{\rho}\eta^{\frac{3}{2}})\leq\gamma_+^{2(2b)^{m+2}}.
\end{align*}

Then
\begin{equation}\label{eXP+}
\pmb\|X_{P_+}\pmb\|^{\lambda}_{r_+,D(s_+,r_+)}\leq \gamma_+^{2(2b)^{m+2}}r_+^{m-a}\eta_+^m,
\end{equation}
where $0<\lambda<\frac{\gamma^{(2b)^{m+2}}}{M}$.
\end{lemma}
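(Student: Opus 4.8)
The plan is to estimate each of the three pieces of $P_+=\int_0^1\{R_t,F\}\circ\phi_F^t\,dt+(P-R)\circ\phi_F^1+Q$ separately, using the transformation estimates of Lemma~\ref{le7}, the homological-equation estimate \eqref{XF} of Lemma~\ref{le2}, and the truncation estimate \eqref{XP-R} of Lemma~\ref{le1}. Throughout I would work with the weighted vector-field norm $\pmb\|\cdot\pmb\|_r$ (and its Lipschitz counterpart, since the bound \eqref{eXP+} is for $\pmb\|\cdot\pmb\|^{\lambda}$), pull back via $\phi_F^t$ and $\phi$ using part (4) of Lemma~\ref{le7} to control how the Hamiltonian vector field transforms under a symplectic change of variables, and at the very end re-express the domain parameters through $r_+=\eta r$, $s_+=s-6\rho$, $\eta_+=\eta^{1+\frac1{2m}}$, $\gamma_+=\frac{\gamma}{2}+\frac{\gamma_0}{4}$ to match the target.

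First I would handle the Poisson-bracket term. Since $R_t=(1-t)Q+(1-t)[R]+tR$, one has $\pmb\|X_{R_t}\pmb\|^*_{r,D(s-\rho,8\eta r)}\le c\pmb\|X_R\pmb\|^*+c\pmb\|X_Q\pmb\|^*$; the bound on $X_R$ is \eqref{XR}, and $Q=(\partial_zg+\partial_zf)J\partial_zF|_{2|\imath|+|\jmath|>m\text{ or }|\ell|>2}$ is a product of a quantity of size $O(r^{m-2}\eta^m)$-type (the second derivative of the normal form, controlled as in the proof of Lemma~\ref{le2}) with $\partial_zF$, the high-order truncation tail of which carries an extra factor $\eta^{m+1-a}$-type smallness exactly as in the proof of Lemma~\ref{le1}; so $Q$ is negligible compared to $R$. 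Then, using the standard estimate for the vector field of a Poisson bracket, $\pmb\|X_{\{R_t,F\}}\pmb\|_r\le c\,\pmb\|DX_{R_t}\pmb\|\,\pmb\|X_F\pmb\|+c\,\pmb\|DX_F\pmb\|\,\pmb\|X_{R_t}\pmb\|$ on a slightly shrunk domain, together with the generalized Cauchy estimate for $DX_F$ from Lemma~\ref{le2} ($\pmb\|DX_F\pmb\|\le c\frac1{\rho r^a}\pmb\|X_F\pmb\|$) and \eqref{XF}, one gets a bound of the shape $c\frac{A_\rho^2}{\rho}r^{2(m-a)}\eta^{2m}\cdot r^{-a}=c\frac{A_\rho^2}{\rho}r^{2m-3a}\eta^{2m}$; pulling back by $\phi_F^t$ only changes constants by part (4) of Lemma~\ref{le7}. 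Second, $(P-R)\circ\phi_F^1$ is controlled directly by \eqref{XP-R}, which gives the factor $\eta^{m-a+1}$ against $\pmb\|X_P\pmb\|_r$, and then \eqref{XP} bounds $\pmb\|X_P\pmb\|_r$ by $\gamma^{2(2b)^{m+2}}r^{m-a}\eta^m$. Third, $Q$ contributes the term already discussed.

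Collecting, each contribution, after dividing by the target $r_+^{m-a}\eta_+^m=(\eta r)^{m-a}\eta^{(1+\frac1{2m})m}=r^{m-a}\eta^{2m-a+\frac12}$, produces precisely (up to constants) the three summands in hypothesis \textbf{(H5)}: the bracket term gives $\frac{A_\rho^2}{\rho^2}r^{m-2a}\eta^{-\frac12}$ (the extra $1/\rho$ coming from the Cauchy estimate on a further $\rho$-shrink), the $(P-R)$ term gives $c\,\eta^{\frac12}\gamma^{2(2b)^{m+2}}$, and the $Q$ term gives $c\frac{A_\rho}{\rho}\eta^{\frac32}$. Thus $\pmb\|X_{P_+}\pmb\|_{r_+,D(s_+,r_+)}\le \Delta\, r_+^{m-a}\eta_+^m\le \gamma_+^{2(2b)^{m+2}}r_+^{m-a}\eta_+^m$ by \textbf{(H5)}. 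The Lipschitz semi-norm is handled identically, each $\pmb\|\cdot\pmb\|$ replaced by its $\mathcal L$-version and using that $\pmb\|X_F\pmb\|^{\mathcal L}\le \frac{M}{\gamma^{(2b)^{m+2}}}\pmb\|X_F\pmb\|$ and the analogous bounds for $\phi_F^t$, $\phi$, which is why the final estimate is stated for $\pmb\|\cdot\pmb\|^{\lambda}$ with $\lambda<\gamma^{(2b)^{m+2}}/M$; one also checks $\Pi_+$ was defined so that the small-divisor denominators in $A_\rho$ are the correct ones for the next step. The main obstacle is the bookkeeping in the Poisson-bracket term: tracking how the composition with $\phi_F^t$ interacts with the weighted norm on the different scales $r,8\eta r,4\eta r,2\eta r,\eta r$, making sure that the extra $1/\rho$ factors from the successive Cauchy estimates are absorbed into the $A_\rho^2/\rho^2$ of \textbf{(H5)} rather than producing an uncontrolled power of $\rho^{-1}$, and verifying that the contribution of $Q$ — which is the genuinely new term caused by the degeneracy of order $m$ — is indeed of lower order.
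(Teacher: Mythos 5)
Your plan is correct and follows essentially the same route as the paper's proof: the same splitting $X_{P_+}=\int_0^1(\Phi_+)^*[X_{R_t},X_F]\,dt+(\Phi_+)^*(X_P-X_R)+\phi^*X_Q$, the same use of the truncation estimate \eqref{XP-R}, the homological bound \eqref{XF} with the generalized Cauchy estimate for $DX_F$, $DX_{R_t}$, the pullback bounds from Lemma \ref{le7}, and the same identification of the three contributions with the three summands of \textbf{(H5)} after dividing by $r_+^{m-a}\eta_+^m$, with the Lipschitz part absorbed via $\lambda<\gamma^{(2b)^{m+2}}/M$. The only blemish is the intermediate ``shape'' you quote for the bracket term (it should carry $\eta^{2m-a}$ and $\rho^{-2}$, since the Cauchy estimate divides by $\rho(\eta r)^a$), but you then state the correct final summand $\frac{A_\rho^2}{\rho^2}r^{m-2a}\eta^{-1/2}$, exactly as in the paper.
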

\begin{proof}
Recall (\ref{eq16}) and (\ref{eq18}), i.e.,
\begin{align*}
P_+&=\int_0^1\{R_t,F\}\circ\Phi_+dt+(P-R)\circ\Phi_++Q\circ\phi,\\
R_t&=(1-t)Q+(1-t)[R]+tR.
\end{align*}
Hence the new perturbing vectorfield is
\begin{align}\label{XP+}
X_{P_+}=\int_{0}^1(\Phi_+)^*[X_{R_t},X_{F}]dt+(\Phi_+)^*(X_P-X_R)+\phi^*X_{Q},
\end{align}
where $(\Phi_+)^*(X_P-X_R)=D\Phi_+^{-1}(X_P-X_R)\circ\Phi_+$,  $[X_{R_t},X_{F}]=JD\nabla R_tX_F-JD\nabla FX_{R_t}$.

Let $\Phi_{+}$ map $D(s-6\rho,\eta r)$ into $D(s-4\rho, 4\eta r)$. Using the method in \cite{poschel2} on page of $14$-$15$, we can prove that for $0<\lambda<\frac{\gamma^{(2b)^{m+2}}}{M}$
\begin{align}\label{Phi*}
\pmb\|\Phi_{+}^*Y\pmb\|_{{\eta r},D(s-6\rho,\eta r)}&<c\pmb\|Y\pmb\|_{\eta r,D(s-4\rho,4\eta r)},\\\label{phi*l}
\pmb\|\Phi_{+}^*Y\pmb\|^{\lambda}_{{\eta r},D(s-6\rho,\eta r)}&<c\pmb\|Y\pmb\|^{\lambda}_{\eta r,D(s-3\rho,5\eta r)}.
\end{align}
Then in view of (\ref{XP-R}) and (\ref{Phi*}), we can prove
\begin{align}\label{Phi*XP}
\pmb\|(\Phi_+)^*(X_P-X_R)\pmb\|^{\lambda}_{\eta r,D(s-6\rho,\eta r)}\leq c\pmb\|X_P-X_R\pmb\|^{\lambda}_{\eta r,D(s-4\rho,4\eta r)}\leq c\eta^{m+1-a}\pmb\|X_P\pmb\|^{\lambda}_{ r,D(s,r)}.
\end{align}

Recall the definition of $Q$ in (\ref{Q}), i.e.,
\begin{align*}
Q=(\partial_zg+\partial_zf)J\partial_zF|_{2|\imath|+|\jmath|> m, or |\ell|\geq 2}.
\end{align*}
In the following, we estimate
\begin{align*}
\pmb\|X_Q\pmb\|_{\eta r,D(s-2\rho,6\eta r)}=\sup_{D(s-2\rho,6\eta r)}\{\frac{|Q_y|}{(\eta r)^{a-2}}+\frac{|Q_x|}{(\eta r)^a}+\frac{||JQ_z||_{\bar p}}{(\eta r)^{a-1}}+\|Q_{w}\|+\|Q_{\bar w}\|\}.
\end{align*}
We calculate
\begin{align}\label{Qy}
\frac{|Q_y|_{D(s-2\rho,6\eta r)}}{(\eta r)^{a-2}}&<\frac{|Q|_{D(s-2\rho,8\eta r)}}{(\eta r)^a}<\frac{|(\partial_zg+\partial_zf)I\partial_zF|_{D(s-2\rho,8\eta r)}}{(\eta r)^a}<c\frac{r^{a-1}\pmb\|X_F\pmb\|_{r,{D(s-2\rho,8\eta r)}}}{(\eta r)^{a-2}}\\
&<cA_\rho r^{m-a+1}\eta^{m-a+2},\notag
\end{align}
and
\begin{align}\label{QyL}
\frac{|Q_y|^{\mathcal{L}}_{D(s-2\rho,6\eta r)}}{(\eta r)^{a-2}}&<cr^{a-1}\pmb\|X_F\pmb\|+cr^{a-1}\pmb\|X_F\pmb\|^{\mathcal{L}}<c\frac{M}{\gamma^{(2b)^{2\imath+\jmath+\ell}}}A_\rho r^{m-a+1}\eta^{m-a+2}.
\end{align}
Similarly, we can prove
\begin{align}\label{Qz}
\frac{|Q_x|_{D(s-2\rho,6\eta r)}}{(\eta r)^a}, \frac{||Q_z||_{\bar p,D(s-2\rho,6\eta r)}}{(\eta r)^{a-1}},||Q_w||_{\bar p,D(s-2\rho,6\eta r)},||Q_{\bar w}||_{\bar p,D(s-2\rho,6\eta r)}\leq c\frac{A_\rho}{\rho}r^{m-a+1}\eta^{m-a+2},
\end{align}
and
\begin{align}
\frac{|Q_x|^{\mathcal{L}}_{D(s-2\rho,6\eta r)}}{(\eta r)^a}, \frac{||Q_z||^{\mathcal{L}}_{\bar p,D(s-2\rho,6\eta r)}}{(\eta r)^{a-1}},||Q_w||^{\mathcal{L}}_{\bar p,D(s-2\rho,6\eta r)},||Q_{\bar w}||^{\mathcal{L}}_{\bar p,D(s-2\rho,6\eta r)}\leq c\frac{A_\rho}{\rho}\frac{M}{\gamma^{(2b)^{2\imath+\jmath+\ell}}}r^{m-a+1}\eta^{m-a+2}.
\end{align}
Then
\begin{align}\label{XQ}
\pmb\|X_Q\pmb\|^{\lambda}_{\eta r,D(s-2\rho,6\eta r)}\leq c\frac{A_{\rho}}{\rho}r^{m-a+1}\eta^{m-a+2}.
\end{align}
By (\ref{XR}) and (\ref{XQ}), we can check that
\begin{align}\label{XRt}
\pmb\|X_{R_t}\pmb\|_{\eta r,D(s-2\rho,6\eta r)}&\leq \pmb\|X_{Q}\pmb\|_{\eta r,D(s-2\rho,6\eta r)}+\pmb\|X_{[R]}\pmb\|_{\eta r,D(s-\rho,8\eta r)}+\pmb\|X_{R}\pmb\|_{\eta r,D(s-\rho,8\eta r)}\notag\\
&\leq c\frac{A_{\rho}}{\rho}r^{m-a+1}\eta^{m-a+2}+c\gamma^{2(2b)^{m+2}}r^{m-a}\eta^{m},\notag\\
&\leq c\frac{A_{\rho}}{\rho}r^{m-a}\eta^{m},
\end{align}
and
\begin{align}
\pmb\|X_{R_t}\pmb\|^\mathcal{L}_{\eta r,D(s-2\rho,6\eta r)}&\leq c\frac{A_\rho}{\rho}\frac{M}{\gamma^{(2b)^{2\imath+\jmath+\ell}}}r^{m-a}\eta^{m}.
\end{align}
Using the generalized Cauchy estimate, Lemmas \ref{le1} and \ref{le2}, we get
\begin{align}\label{DXF}
\pmb\|DX_F\pmb\|^*_{r,r,D(s-3\rho, 5\eta r)}&\leq\frac{\pmb\|X_F\pmb\|^*_{r,D(s-2\rho,6\eta r)}}{\rho(\eta r)^a},~~~
\pmb\|DX_{R_t}\pmb\|^*_{r,r,D(s-3\rho, 5\eta r)}\leq\frac{\pmb\|X_{R_t}\pmb\|^*_{r,D(s-2\rho,6\eta r)}}{\rho(\eta r)^a}.
\end{align}
Then (\ref{XR}), (\ref{XF}), (\ref{XRt}), (\ref{DXF}) together with the definition of $[\cdot,\cdot]$ yield
\begin{align}\label{[XRt]}
\pmb\|[X_{R_t},X_F]\pmb\|_{\eta r,D(s-3\rho, 5\eta r)}&\leq\pmb\|DX_{R_t}\cdot X_F\pmb\|_{\eta r,D(s-3\rho, 5\eta r)}+\pmb\|DX_F\cdot X_{R_t} \pmb\|_{\eta r,D(s-3\rho, 5\eta r)}\\
&\leq\pmb\|DX_{R_t}\pmb\|_{\eta r,\eta r}\pmb\|X_F\pmb\|_{\eta r,D(s-3\rho, 5\eta r)}+\pmb\|DX_F \pmb\|_{\eta r,\eta r}\pmb\|X_{R_t} \pmb\|_{\eta r,D(s-3\rho, 5\eta r)}\notag\\
&\leq\frac{\pmb\|X_F\pmb\|_{\eta r,D(s-2\rho, 6\eta r)}\pmb\|X_{R_t} \pmb\|_{\eta r,D(s-2\rho, 6\eta r)}}{\rho (\eta r)^a}\notag\\
&\leq c\frac{A_\rho^2}{\rho^2}r^{2m-3a}\eta^{2m-a}.\notag
\end{align}
Similarly,
\begin{align}\label{[XRtL]}
\pmb\|[X_{R_t},X_F]\pmb\|^{\mathcal{L}}_{\eta r,D(s-3\rho, 5\eta r)}&\leq\pmb\|DX_{R_t}\cdot X_F\pmb\|^{\mathcal{L}}_{\eta r,D(s-3\rho, 5\eta r)}+\pmb\|DX_F\cdot X_{R_t} \pmb\|^{\mathcal{L}}_{\eta r,D(s-3\rho, 5\eta r)}\\
&\leq\pmb\|DX_{R_t}\pmb\|^{\mathcal{L}}_{\eta r,\eta r}\pmb\|X_F\pmb\|_{\eta r,D(s-3\rho, 5\eta r)}+\pmb\|DX_{R_t}\pmb\|_{\eta r,\eta r}\pmb\|X_F\pmb\|^{\mathcal{L}}_{\eta r,D(s-3\rho, 5\eta r)}\notag\\
&~~~~+\pmb\|DX_F \pmb\|^{\mathcal{L}}_{\eta r,\eta r}\pmb\|X_{R_t} \pmb\|_{\eta r,D(s-3\rho, 5\eta r)}+\pmb\|DX_F \pmb\|_{\eta r,\eta r}\pmb\|X_{R_t} \pmb\|^{\mathcal{L}}_{\eta r,D(s-3\rho, 5\eta r)}\notag\\
&\leq c\frac{A_\rho^2}{\rho^2}\frac{M}{\gamma^{(2b)^{m+2}}}r^{2m-3a}\eta^{2m-a}.\notag
\end{align}
So, by (\ref{phi*l}), (\ref{[XRt]}) and (\ref{[XRtL]}), we have
\begin{align}\label{Phi*XRt}
\pmb\|(\Phi_+)^*[X_{R_t},X_{F}]\pmb\|^{\lambda}_{\eta r,D(s-6\rho,\eta r)}&\leq c\pmb\|[X_{R_t},X_{F}]\pmb\|^{\lambda}_{\eta r,D(s-3\rho,5\eta r)}\leq c\frac{A_\rho^2}{\rho^2}r^{2m-3a}\eta^{2m-a}. 
\end{align}

Recall (\ref{XP+}) and collect all terms (\ref{Phi*XP}), (\ref{XQ}), (\ref{Phi*XRt}) we then arrive at the estimate
\begin{align*}
\pmb\|X_{P_+}\pmb\|^{\lambda}_{\eta r,D(s-5\rho,\eta r)}&\leq c\frac{A_\rho^2}{\rho^2}r^{2m-3a}\eta^{2m-a}
+c\eta^{m-a+1}\gamma^{(2b)^{m+2}}r^{m-a}\eta^m+c\frac{A_{\rho}}{\rho}r^{m-a+1}\eta^{m-a+2}\\
&\leq cr_+^{m-a}\eta_+^m(\frac{A_\rho^2}{\rho^2}r^{m-2a}\eta^{-\frac{1}{2}}
+c\eta^{\frac{1}{2}}\gamma^{(2b)^{m+2}}+c\frac{A_{\rho}}{\rho}\eta^{\frac{3}{2}})\\
&\leq\Delta r_+^{m-a}\eta_+^m\\
&\leq \gamma_+^{2(2b)^{m+2}}r_+^{m-a}\eta_+^m,
\end{align*}
where the last inequality follows from \textbf{\textsc{(H5)}}.

The proof is complete.

\end{proof}
This completes one cycle of KAM steps.
\section{Proof of Main Results}\label{sec:proof}
\subsection{Iteration Lemma}
In this section, we will prove an iteration lemma which guarantees the inductive construction of the transformations in all KAM steps.

Let $r_0,s_0,\gamma_0,
\eta_0,H_0,N_0,P_0$ be given at the beginning of Section \ref{sec:KAM} and let 
$D_0=D(s_0,r_0)$, $K_0=0$, $\Phi_0=id$. We define the following sequence inductively for all $\nu=1,2,\cdots$:
\begin{align*}
\rho_{\nu}&=\frac{\rho_0}{2^\nu},\\
\eta_\nu&=\eta_{\nu-1}^{1+\frac{1}{2m}},\\
r_\nu&=\eta_{\nu-1}r_{\nu-1},\\
s_\nu&=s_{\nu-1}-6\rho_{\nu-1},\\
\gamma_\nu&=\gamma_0(\frac{1}{2}+\frac{1}{2^\nu}),\\
\sigma_\nu&=\sigma_0(\frac{1}{2}+\frac{1}{2^\nu}),\\
M_\nu&=M_0(2-\frac{1}{2^\nu}),\\
K_\nu&=([\log(\frac{1}{\eta_{\nu-1}^{m+1}})]+1)^{3\mu},\\
D_\nu&=D(s_\nu, r_\nu),\\
{\Pi_\nu}&=\{\xi\in\Pi_{\nu-1}:|\langle k,\omega_{\nu-1}(\xi)\rangle+\langle\ell,\Omega_{\nu-1}(\xi)\rangle|\geq\frac{\gamma_{\nu-1}\langle \ell\rangle_d}{(1+|k|)^\tau}, \\
&~~~~~~|k|\leq K_+, |\ell|\leq2, |k|+|\ell|\neq0 \}.
\end{align*}
\begin{lemma}\label{le9}
Denote
\begin{align*}
\eta_*^2=\eta_0^m.
\end{align*}
If $\varepsilon$ is small enough, then the KAM step described in Section \ref{sec:KAM} is valid for all $\nu=0,1,\cdots$, resulting the sequences
$$e_\nu, \omega_\nu, \Omega_\nu, g_\nu, f_\nu, P_\nu, \Phi_\nu, H_\nu$$
$\nu=1,2,\cdots,$ with the following properties:
\begin{itemize}
\item[(1)]
\begin{align}\label{eq30}
|{e_{\nu+1}}-{e_{\nu}}|_{\Pi_\nu}&\leq \frac{\eta_*^\frac{1}{2}}{2^{\nu-1}},\\\label{eq31}
|{e_{\nu+1}}-{e_{0}}|_{\Pi_\nu}&\leq2\eta_*^{\frac{1}{2}},\\\label{eq32}
|{\omega_{\nu+1}}-{\omega_{\nu}}|^{\lambda_\nu}_{\Pi_\nu}&\leq \frac{\eta_*^\frac{1}{2}}{2^\nu},\\\label{eq33}
|{\omega_{\nu+1}}-{\omega_{0}}|^{\lambda_\nu}_{\Pi_\nu}&\leq2\eta_*^{\frac{1}{2}},\\\label{eq34}
\pmb|{\Omega_{\nu+1}}-{\Omega_{\nu}}\pmb|^{\lambda_\nu}_{-\delta,\Pi_\nu}&\leq \frac{\eta_*^\frac{1}{2}}{2^\nu},\\\label{eq35}
\pmb|{\Omega_{\nu+1}}-{\Omega_{0}}\pmb|^{\lambda_\nu}_{-\delta,\Pi_\nu}&\leq2\eta_*^{\frac{1}{2}},\\\label{eq36}
|{g_{\nu+1}}-{g_{\nu}}|_{D(s_\nu,r_{\nu})}&\leq \frac{\eta_*^\frac{1}{2}}{2^{\nu-1}},\\\label{eq37}
|{g_{\nu+1}}-{g_{0}}|_{D(s_\nu,r_{\nu})}&\leq2\eta_*^{\frac{1}{2}},\\\label{eq38}
|{f_{\nu+1}}-{f_{\nu}}|_{D(s_\nu,r_{\nu})}&\leq \frac{\eta_*^\frac{1}{2}}{2^{\nu-1}},\\\label{eq39}
|{f_{\nu+1}}-{f_{0}}|_{D(s_\nu,r_{\nu})}&\leq 2\eta_*^{\frac{1}{2}},\\\label{eq40}
\pmb\|X_{P_\nu}\pmb\|^{\lambda_\nu}_{r_\nu,D(s_\nu,r_{\nu})}&\leq\frac{\eta_*^\frac{1}{2}}{2^\nu},\\\label{eq41}
||\zeta_{\nu+1}-\zeta_{\nu}||_{p}&\leq \frac{\eta_*^\frac{1}{2}}{2^\nu}.
\end{align}
\item[(2)] There exist a Lipschitz family of real analytic symplectic coordinate transformations  $\Phi_{\nu+1}:{D}_{\nu+1}\times \Pi_{\nu+1}\rightarrow {D}_{\nu}$ and a closed subset $$\Pi_{\nu+1}=\Pi_\nu\setminus\cup_{|k|>K_\nu}\mathcal{R}_{kl}^{\nu+1}(\gamma_{\nu+1})$$
of $\Pi_\nu$, where $$\mathcal{R}_{kl}^{\nu+1}(\gamma_{\nu+1})=\{\xi\in\Pi_\nu:|\langle k,\omega_{\nu+1}\rangle+\langle\ell,\Omega_{\nu+1}\rangle|<\frac{\gamma_{\nu+1}\langle\ell\rangle_d}{(1+|k|)^\tau}\},$$
such that on $D_{\nu+1}\times\Pi_{\nu+1}$, \begin{equation*}
H_{\nu+1}=H_\nu\circ\Phi_{\nu+1}=N_{\nu+1}+P_{\nu+1},
\end{equation*}
\begin{align}\label{Peq36}
||\Phi_{\nu+1}-id||_{{D}_{\nu+1}}\leq\frac{\eta_*^\frac{1}{2}}{2^\nu},
\end{align}
and the same estimates as above are satisfied with $\nu+1$ in place of $\nu$, that is,
\begin{align} \label{M+} |\omega_{\nu+1}(\xi)|_{\Pi_{\nu+1}}^\mathcal{L}+\pmb|\Omega_{\nu+1}(\xi)\pmb|_{-\delta,\Pi_{\nu+1}}^\mathcal{L}\leq M_{\nu+1},\end{align}
\begin{align}\label{XP+2}\pmb\|X_{P_{\nu+1}}\pmb\|^{\lambda_{\nu+1}}_{D(s_{\nu+1},r_{\nu+1}),\Pi_{\nu+1}}<\gamma_{\nu+1}^{2(2b)^{m+2}}r_{\nu+1}^{m-a}\eta_{\nu+1}^m,\end{align}
and \begin{align}\label{Pi+}|\Pi_{\nu+1}\setminus\Pi_\nu|<c\gamma_0\frac{1}{1+K_{\nu-1}}.\end{align}
\end{itemize}
\end{lemma}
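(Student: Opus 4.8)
\emph{Overall strategy and base step.} The proof is an induction on $\nu$, the point being that at each step the five smallness conditions \textbf{\textsc{(H1)}}--\textbf{\textsc{(H5)}} of Lemmas~\ref{le1}, \ref{le2}, \ref{le7}, \ref{le8} are met, so that one cycle of the KAM step of Section~\ref{sec:KAM} applies; the estimates \eqref{eq30}--\eqref{Pi+} at level $\nu+1$ are then read off from the explicit construction there (truncation, homological equation, translation). For $\nu=0$ the normal form $N_0$, the domain $D_0=D(s_0,r_0)$ and the bound $\pmb\|X_{P_0}\pmb\|^{\lambda_0}_{r_0,D(s_0,r_0)}\le\gamma_0^{2(2b)^{m+2}}r_0^{m-a}\eta_0^m$ are exactly the data recorded in Section~\ref{sec:KAM}, the last being \eqref{P0}; and since $\zeta_0=0$ with $\nabla g_0(0)=0$ (because $g=o(\|z\|_{\textsf{a},\textsf{p}}^2)$), the starting hypotheses \eqref{0}--\eqref{nag0} of Lemma~\ref{le3} hold. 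Thus (1)--(2) hold at $\nu=0$ directly from \eqref{s0}--\eqref{K1}.

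\emph{Verifying} \textbf{\textsc{(H1)}}--\textbf{\textsc{(H5)}} \emph{at step $\nu$.} Assuming the assertions up to step $\nu$, I would first record the elementary growth of the step parameters: from $\eta_\nu=\eta_0^{(1+1/2m)^\nu}$, $\rho_\nu=\rho_0/2^\nu$, $r_\nu=\bigl(\prod_{j<\nu}\eta_j\bigr)r_0$, $\gamma_\nu\in[\gamma_0/2,\gamma_0]$ and $K_\nu=([\log\eta_{\nu-1}^{-(m+1)}]+1)^{3\mu}$, and from the choice of $\mu$ with $(1+1/2m)^\mu\ge2$, one gets $K_\nu\rho_\nu\gtrsim 2^\nu$, whence \textbf{\textsc{(H1)}}, $K_+^n\mathrm{e}^{-K_+\rho}<\eta^{m+1}$, holds once $\varepsilon$ is small. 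For the quantity $A_\rho$ of \eqref{Arho} one uses the classical bound $\sum_{k\in\mathbb Z^n}(1+|k|)^{p}\mathrm{e}^{-|k|\rho}\le c\,\rho^{-(p+n)}$, giving $A_\rho\le c\,\rho^{-\kappa}$ for an explicit $\kappa=\kappa(m,n,b,\tau)$; combined with the definitions $\gamma_0=\varepsilon^{1/(4(2b)^{m+2}\Xi)}$, $\eta_0=\gamma_0^{2(2b)^{m+2}}\varepsilon^{1/\Xi}$, $\Xi=8\mu(m+1)(m-a)(\tau+1)$, this yields \textbf{\textsc{(H2)}}--\textbf{\textsc{(H4)}} and, crucially, \textbf{\textsc{(H5)}}, $\Delta\le\gamma_+^{2(2b)^{m+2}}$. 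The decisive point is the choice \eqref{a}, $a=[\tfrac{m+1}{3}]+1$, which forces $m+1-a\ge 2a$, so that each of the three terms of $\Delta$ carries surplus powers of $\eta$ and $r$ sufficient to absorb $A_\rho^2/\rho^2$ and to dominate $\gamma_+^{2(2b)^{m+2}}$; the verification is routine but must be carried out with the exponents kept explicit, exactly as in the proof of \eqref{P0}.

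\emph{Propagating the estimates.} With \textbf{\textsc{(H1)}}--\textbf{\textsc{(H5)}} in force, Lemma~\ref{le1} produces the truncation $R$, Lemma~\ref{le2} solves the homological equation with $\pmb\|X_F\pmb\|^\lambda_{r,D(s-\rho,r)}\le c_2A_\rho r^{m-a}\eta^m$, Lemma~\ref{le3} (using \textbf{(A0)}) furnishes $\zeta_+\in B_{(r_-^{m-1}\eta_-^m)^{1/L}}(\zeta)$ with $\nabla g_+(0)=0$ together with the persistence of the degree and weak convexity conditions, Lemma~\ref{le7} gives $\Phi_+=\phi_F^1\circ\phi:D_+\to D$ with $\pmb\|\Phi_+-\mathrm{id}\pmb\|^*_r\le c_3A_\rho r^{m-a}\eta^m$, and Lemma~\ref{le8} delivers $\pmb\|X_{P_+}\pmb\|^\lambda_{r_+,D(s_+,r_+)}\le\gamma_+^{2(2b)^{m+2}}r_+^{m-a}\eta_+^m$, which is \eqref{XP+2}. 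The increments of $e,\omega,\Omega,g,f$ are bounded via \eqref{e+}--\eqref{f+} and \eqref{om+-om} by $c\,\pmb\|X_P\pmb\|^{\lambda}_r\le c\,\gamma_\nu^{2(2b)^{m+2}}r_\nu^{m-a}\eta_\nu^m$, and since this product decays much faster than $\eta_*^{1/2}2^{-\nu}=\eta_0^{m/2}2^{-\nu}$, one gets \eqref{eq30}, \eqref{eq32}, \eqref{eq34}, \eqref{eq36}, \eqref{eq38}, \eqref{eq40}, \eqref{eq41} and \eqref{Peq36}; the telescoped bounds \eqref{eq31}, \eqref{eq33}, \eqref{eq35}, \eqref{eq37}, \eqref{eq39} then follow by geometric summation, and $\sum_{j\le\nu}\bigl(|\omega_{j+1}-\omega_j|^{\mathcal L}+\pmb|\Omega_{j+1}-\Omega_j\pmb|^{\mathcal L}_{-\delta}\bigr)\le M_0(1-2^{-\nu-1})$ gives \eqref{M+}, using $\eta_0^{m/2}\ll\gamma_0^{(2b)^{m+2}}/M_0$.

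\emph{Measure estimate and the main obstacle.} It remains to prove \eqref{Pi+}. For fixed $(k,\ell)$ with $|\ell|\le2$ and $|k|>K_{\nu-1}$, the map $\xi\mapsto\langle k,\omega_{\nu+1}(\xi)\rangle+\langle\ell,\Omega_{\nu+1}(\xi)\rangle$ is a small Lipschitz perturbation of $\langle k,\omega(\xi)\rangle+\langle\ell,\Omega(\xi)\rangle$ by \eqref{eq33}, \eqref{eq35}; since $\omega$ is a lipeomorphism by \textbf{(A1)} and $\Omega$ obeys \textbf{(A2)}, its derivative along $k/|k|$ is bounded below by $c|k|$, so $|\mathcal R^{\nu+1}_{k\ell}(\gamma_{\nu+1})|\le c\,\gamma_{\nu+1}\langle\ell\rangle_d/(1+|k|)^{\tau+1}$; only finitely many $\ell$ with $|\ell|\le2$ contribute for each $k$ (because $\Omega_j\sim j^d$ and the weight $\langle\ell\rangle_d$ controls the tail), and summing over $|k|>K_{\nu-1}$ with $\tau$ as in Theorem~\ref{th1} gives \eqref{Pi+}. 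I expect the genuine difficulty to lie not in any one of these steps but in the simultaneous consistency of \textbf{\textsc{(H1)}}--\textbf{\textsc{(H5)}} throughout the iteration — in particular in keeping $\Delta\le\gamma_+^{2(2b)^{m+2}}$ at every step, where the $m$-th order degeneracy, the factor $A_\rho^2/\rho^2$, and the choice \eqref{a} of $a$ must be balanced precisely; this is exactly why the exponents in $\gamma_0$, $\eta_0$ and $\Xi$ were fixed as in \eqref{gamma} and \eqref{s0}.
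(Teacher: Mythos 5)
Your overall plan coincides with the paper's proof: induct on $\nu$, verify \textbf{\textsc{(H1)}}--\textbf{\textsc{(H5)}} from the explicit parameter sequences $\eta_\nu=\eta_0^{(1+\frac1{2m})^\nu}$, $r_\nu$, $\rho_\nu=\rho_0/2^\nu$, $K_\nu$, then let Lemmas \ref{le1}, \ref{le2}, \ref{le3}, \ref{le7}, \ref{le8} run one cycle, read off the increments from \eqref{e+}--\eqref{f+} and \eqref{om+-om}, telescope, and finish with the small-divisor measure estimate (which the paper delegates to \cite{wu}, while you sketch the standard Lipschitz-perturbation argument directly; that difference is harmless).

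There are, however, two concrete defects in the quantitative part you single out as decisive. First, the inequality you attribute to the choice \eqref{a} is reversed: since $a\ge\frac{m+1}{3}$ by definition, one has $2a\ge m+1-a$ (with equality only when $\frac{m+1}{3}\in\mathbb Z$), never $m+1-a\ge 2a$ in the rounded case. The inequality $2a\ge m+1-a$ is what Lemma \ref{le1} needs (so that the part of $P-R$ that is cubic in $(w,\bar w)$, of size $\eta^{2a}$, is $\le\eta^{m+1-a}$), whereas the verification of \textbf{\textsc{(H5)}} uses the complementary requirement $m-2a>0$, through $r_\nu^{m-2a}\eta_\nu^{-9/4}<1$ as in \eqref{P+1}; the choice \eqref{a} works precisely because $\frac{m+1}{3}\le a<\frac m2$ for the admissible $m\ge L+\frac{\sqrt{4L^2+2L}}{2}$, and your argument as written would try to extract surplus powers from a false inequality. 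Second, the increments $e_{\nu+1}-e_\nu$, $g_{\nu+1}-g_\nu$, $f_{\nu+1}-f_\nu$ and $\zeta_{\nu+1}-\zeta_\nu$ are not controlled by $c\,\pmb\|X_{P_\nu}\pmb\|$ alone: the translation $\phi$ shifts $z$ by $\zeta_{\nu+1}-\zeta_\nu$, whose size Lemma \ref{le3} bounds only by $(r_{\nu-1}^{m-1}\eta_{\nu-1}^m)^{1/L}$, and it is this $L$-th root — small enough exactly because $m\ge L+\frac{\sqrt{4L^2+2L}}{2}$ — that must be compared with $\eta_*^{1/2}2^{-\nu}$ to obtain \eqref{eq30}, \eqref{eq36}, \eqref{eq38}, \eqref{eq41} (and, via the composition with $\phi$, the corresponding bounds for $g_{\nu+1}$, $f_{\nu+1}$). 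Your "decays much faster than $\eta_*^{1/2}2^{-\nu}$" step must therefore be carried out for this quantity, not for $\pmb\|X_{P_\nu}\pmb\|$; once these two points are repaired, the rest of your sketch matches the paper's argument.
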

\begin{proof}
The proof amounts to the verification of $\textbf{(H1)}$-$\textbf{(H5)}$ for all $\nu$.
According to the definition of $r_\nu$ and $\eta_\nu$, we note that
\begin{align*}
r_\nu=\eta_0^{2m((1+\frac{1}{2m})^\nu-1)+m},~~~\eta_\nu=\eta_0^{(1+\frac{1}{2m})^\nu}.
\end{align*}

In the following, we prove $\textbf{(H1)}$-$\textbf{(H5)}$.

$\textbf{(H1)}$: Since $(1+\frac{1}{2m})^\mu>2$, we have
\begin{align*}
\frac{\rho_0}{2^{\nu}}([\log\frac{1}{\eta^{m+1}}]+1)^\mu&=\frac{\rho_0}{2^{\nu}}((1+\frac{1}{2m})^\nu\log\frac{1}{\eta_0^{m+1}}+1)^\mu\\
&\geq\frac{\rho_0}{2^{\nu}}2^{\nu}(\log\frac{1}{\eta_0^{m+1}})^\mu\\
&\geq1.
\end{align*}
It follows from the above that
\begin{align*}
&3\mu n\log([\log{\frac{1}{\eta^{m+1}}}]+1)-\frac{\rho_0}{2^\nu}([\log\frac{1}{\eta^{m+1}}]+1)^{3\mu}\\
&3\mu n\log([\log{\frac{1}{\eta^{m+1}}}]+1)-(\log\frac{1}{\eta^{m+1}})^{2\mu}\\
&\leq-\log\frac{1}{\eta^{m+1}},
\end{align*}
as $\eta$ is small enough, which is ensured by making $\varepsilon$ small.
Thus,
\begin{equation*}
K_{\nu+1}^{n}{\rm e}^{-K_{\nu+1}\rho_\nu}\leq \eta_\nu^{m+1},
\end{equation*}
i.e. $\textbf{(H1)}$ holds.

$\textbf{(H2)}$: By a similar method as \cite{chow}, we can prove \textbf{(H2)}.

It is easy to see that \textbf{(H3)} and \textbf{(H4)} hold by the below (\ref{<<1}).


$\textbf{(H5)}$:
We recall $\Delta$ here what has been defined  in \textbf{(H5)} and estimate it term by term.
In view of the definition of $A_\rho$ in (\ref{Arho}) and $\rho_\nu=\frac{\rho_0}{2^\nu}$, we calculate
\begin{align}\label{<<1}
\frac{A_\rho\eta_0^{\frac{1}{4}(1+\frac{1}{2m})^\nu}}{\rho_\nu}&=\frac{(2^n\sum_{0<|k|<K_+}|k|^{4\tau+2}{\rm e}^{\frac{-2|k|\rho_0}{2^\nu}})^\frac{1}{2}2^\nu\eta_0^{\frac{1}{4}(1+\frac{1}{2m})^\nu}}{\rho_0}\notag\\
&\leq\frac{(2^n(\frac{2^{\nu-1}}{\rho_0})^{4\tau+3}(4\tau+2)!)^\frac{1}{2}2^\nu\eta_0^{\frac{1}{4}(1+\frac{1}{2m})^\nu}}{\rho_0}\notag\\
&\leq\frac{(2^{n+2}(4\tau+2)!)^\frac{1}{2}(2^{2\tau+\frac{5}{2}})^{\nu-1}\eta_0^{\frac{1}{4}(1+\frac{1}{2m})^{\nu-1}}\eta_0^{\frac{1}{4}(1+\frac{1}{2m})}}{\rho_0^{2\tau+\frac{5}{2}}}\notag\\
&\leq\frac{(2^{n+2}(4\tau+2)!\eta_0^{\frac{1}{2}(1+\frac{1}{2m})})^\frac{1}{2}}{\rho_0^{2\tau+\frac{5}{2}}}(2^{2\tau+\frac{5}{2}})^{\nu-1}\eta_0^{\frac{1}{4}(1+\frac{1}{2m})^{\nu-1}}\notag\\
&\ll1.
\end{align}
Recall that
$\gamma_0=\varepsilon^{\frac{1}{4(2b)^{m+2}\Xi}}$, $\eta_0=\gamma_0^{2(2b)^{m+2}}\varepsilon^{\frac{1}{\Xi}}$, we have $$\eta_0<\gamma_0^{2(2b)^{m+2}}.$$ Observe that $\eta_\nu=\eta_0^{(1+\frac{1}{2m})^\nu}$, and $\gamma_\nu=\frac{\gamma_0}{2}(1+\frac{1}{2^\nu})$. Then $$\eta_\nu<\gamma_\nu^{2(2b)^{m+2}}$$
and
\begin{align}\label{eta+}
\eta_\nu^{1+\frac{1}{4}}<\gamma_{\nu+1}^{2(2b)^{m+2}}.
\end{align}
So \begin{align}\label{P+3}c\frac{A_{\rho_\nu}}{\rho_\nu}\eta_\nu^{\frac{3}{2}}\leq\gamma_{\nu+1}^{2(2b)^{m+2}}.\end{align}
Obviously, \begin{align}\label{P+2}c\eta_\nu^{\frac{1}{2}}\gamma_\nu^{(2b)^{m+2}}\leq\gamma_{\nu+1}^{2(2b)^{m+2}}.\end{align}
In view of the definition of $r_\nu$, $\eta_\nu$, we have  $$r_\nu^{m-2a}\eta_\nu^{-\frac{9}{4}}\leq \eta_0^{(m(m-2a)-\frac{9}{4})(1+\frac{1}{2m})^\nu}<1, as~ m>4,$$
this together with (\ref{<<1}) and (\ref{eta+}) yields \begin{align}\label{P+1}
\frac{A_{\rho_\nu}^2\eta_\nu^{\frac{1}{2}}}{\rho_\nu^2}r_\nu^{m-2a}\eta_\nu^{-1}\leq r_\nu^{m-2a}\eta_\nu^{-\frac{9}{4}}\eta_\nu^{1+\frac{1}{4}}\leq\gamma_{\nu+1}^{2(2b)^{m+2}}.
\end{align}

Combine (\ref{P+3}), (\ref{P+2}) with (\ref{P+1}), (\textbf{H5}) holds.

Above all, the KAM steps described in Section \ref{sec:KAM} are valid for all $\nu$, which gives the desired sequences stated in the lemma.

Let $\theta\gg1$ be fixed and $\eta_0$ be small enough so that
\begin{equation}\label{eta0eq38}
\eta_0<(\frac{1}{\theta})^{2m}<1.
\end{equation}
Then
\begin{align}
\eta_1&=\eta_0^{1+\frac{1}{2m}}<\frac{1}{\theta}\eta_0<1,\notag\\
\eta_2&=\eta_1^{1+\frac{1}{2m}}<\frac{1}{\theta}\eta_1<\frac{1}{\theta^2}\eta_0,\notag\\
\vdots\notag\\\label{etanueq39}
\eta_\nu&=\eta_{\nu-1}^{1+\frac{1}{2m}}<\cdots<\frac{1}{\theta^\nu}\eta_0.
\end{align}
Let $\theta\geq2$ in (\ref{etanueq39}). We have that for all $\nu\geq1$
\begin{align}\label{cetanu}
c_0\eta_\nu&\leq\frac{\eta_0}{2^\nu}\leq\frac{\eta_*^\frac{1}{2}}{2^\nu}.
\end{align}

Now, (\ref{eq30}), (\ref{eq36}) and (\ref{eq38}) follow from (\ref{e+}), (\ref{g+}), (\ref{f+}) and $\zeta_+\in B_{(r_-^{m-1}\eta_-^m)^{\frac{1}{L}}}(\zeta)$ in Lemma \ref{le3} and (\ref{cetanu}); by adding up (\ref{eq30}), (\ref{eq36}) and (\ref{eq38}) for all $\nu=0,1,\cdots$, we can get (\ref{eq31}), (\ref{eq37}) and (\ref{eq39}), respectively; (\ref{eq40}) follows from (\ref{XP+}) in Lemma \ref{le8} and (\ref{cetanu}); (\ref{eq41}) follows from $\zeta_+\in B_{(r_-^{m-1}\eta_-^m)^{\frac{1}{L}}}(\zeta)$ in Lemma \ref{le3} and (\ref{cetanu}); (\ref{eq32}) and (\ref{eq34}) follow from (\ref{omega+}), (\ref{Omega+}), (\ref{XR}), and (\ref{cetanu}); by adding up (\ref{eq32}) and (\ref{eq34}) for all $\nu=0,1,\cdots$, we can get (\ref{eq33}) and (\ref{eq35}).

$(2)$ follows from Lemma \ref{le7}.
Since $$\Pi_{\nu+1}=\Pi_\nu\setminus\cup_{K_{\nu-1}<|k|<K_\nu,|\ell|\leq2}\mathcal{R}_{kl}^{\nu+1}(\gamma_{\nu+1}),$$
we have
\begin{align}
|\Pi_{\nu}-\Pi_{\nu+1}|\leq\sum_{K_{\nu-1}<|k|\leq K_{\nu},~ |\ell|\leq2}|\mathcal{R}_{kl}^{\nu+1}(\gamma_{\nu+1})|\leq\sum_{K_{\nu-1}<|k|\leq K_{\nu},~ |\ell|\leq2}\frac{\gamma_0}{(|k|+1)^\tau}<c\gamma_1\frac{1}{1+K_{m-1}}.
\end{align}
The detailed proof can be seen in \cite{wu}.

In view of (\ref{omega+}) and (\ref{Omega+}), the Lipschitz semi-norm of the new frequency can be bounded as following
\begin{align*} |\omega_{\nu+1}(\xi)|_{\Pi_{\nu+1}}^\mathcal{L}+\pmb|\Omega_{\nu+1}(\xi)\pmb|_{-\delta,\Pi_{\nu+1}}^\mathcal{L}\leq M_{\nu}+c\pmb\|X_P\pmb\|_r^{\mathcal{L}}
\leq M_{\nu+1}.
\end{align*}

The proof is complete.
\end{proof}

\subsection{Convergence}
The convergence is standard, see \cite{poschel2}. For the sake of completeness, we briefly give the framework of proof.
Let
\begin{align*}
\Psi^\nu=\Phi_1\circ\Phi_2\circ\cdots\circ\Phi_\nu,~~~~\nu=1,2,\cdots.
\end{align*}
By Lemma \ref{le9}, we have
\begin{align*}
D_{\nu+1}&\subset D_\nu,\\
\Psi^\nu&:{D}_\nu\rightarrow {D}_0,\\
H_0\circ\Psi^\nu&=H_\nu=N_\nu+P_\nu,~~~\nu=0,1,\cdots,
\end{align*}
where $\Psi_0=id$.

To prove the convergence of the sequence $\Psi^\nu$, we note that the operator norm $\pmb\|\cdot\pmb\|_{r,s}$ satisfies
\begin{align*}
\pmb\|AB\pmb\|_{r,s}\leq \pmb\|A\pmb\|_{r,r}\pmb\|B\pmb\|_{s,s}, ~~~r\geq s.
\end{align*}
By the mean value theorem, we thus obtain
\begin{align}\label{psinu+1}
\pmb\|\Psi^{\nu+1}-\Psi^\nu\pmb\|_{r_0,D_{\nu+1}}\leq\pmb\|D\Psi^\nu\pmb\|_{r_0,r_\nu,D_\nu}\pmb\|\Phi_{\nu+1}-id\pmb\|_{r_\nu,D_{\nu+1}}.
\end{align}
In view of the chain rule $D\Psi^\nu=D\Phi_1\circ\cdots\circ D\Phi_\nu$ and (\ref{Dphi-I}), we have
\begin{align}\label{dpsi}
\pmb\|D\Psi^\nu\pmb\|_{r_0,r_\nu,D_\nu}\leq\prod_{i=0}^{\nu}\pmb\|D\Phi_i\pmb\|_{r_i,r_i,D_i}\leq\prod_{i\geq0}(1+\frac{A_\rho}{\rho_i}r_i^{m-2a}\eta_i^{m-a})\leq2,
\end{align}
for all $\nu\geq0$.

Using (\ref{eq36}), (\ref{psinu+1}), (\ref{dpsi}) and the identity
\begin{align*}
\Psi^\nu=id+\sum_{i=1}^\nu(\Psi^i-\Psi^{i-1}),
\end{align*}
it is easy to verify that $\Psi^\nu$ is uniformly convergent and denote the limitation by $\Psi^*$.

By Lemma \ref{le9},we see that $e_\nu$, $ \omega_\nu$, $\Omega_\nu$, $ g_\nu$, $f_\nu$ and $\zeta_\nu$ are uniformly convergent and denote the limitation by  $e_*$, $ \omega_*$, $\Omega_*$, $g_*$, $f_*$ and $\zeta_*$, respectively. It follows from Lemma \ref{le3} that $$\nabla g_*(0)=\cdots=\nabla g_\nu(0)=\cdots=\nabla g_0(0)=0.$$
Then, $N_\nu$ converge uniformly to
\begin{align*}
N_*=e_*(\xi)+\langle\omega_*(\xi), y\rangle+\langle w, \Omega_*(\xi)\bar{w}\rangle+g_*(z,\xi)+f_*(y, z, w, \bar w),
\end{align*}
with
\begin{align*}
g_*(z,\xi)&=g(z,\xi)+\varepsilon^{\frac{3m}{32\mu(m+1)(m-a)(\tau+1)}}O(||z||_{\textsf{a},\textsf{p}}^2),\\
f_*(y,z,w,\bar w,\xi)&=\sum_{4\leq2|\imath|\leq m}f_{\imath000} y^{\imath}+\sum_{2|\imath|+|\jmath|\leq m,1\leq|\imath|,|\jmath|}f_{\imath\jmath00} y^{\imath} z^{\jmath}+\sum_{0<2|\imath|+|\jmath|\leq m}f_{\imath\jmath11} y^{\imath} z^{\jmath} w\bar w.
\end{align*}
Hence \begin{align*}
P_\nu=H_0\circ\Psi^\nu-N_\nu
\end{align*}
converges uniformly to
\begin{equation*}
P_*=H_0\circ\Psi^*-N_*.
\end{equation*}

On the embedded tori, the flow of the perturbed Hamiltonian $H$ can be computed as follows. Note that
\begin{align*}
\pmb\|X_H\circ\Psi^\nu-D\Psi^\nu\cdot X_{N_\nu}\pmb\|\leq\pmb\|D\Psi^\nu\pmb\|_{r_0,r_\nu,D_\nu}\pmb\|(\Psi^\nu)^*X_H-X_{N_\nu}\pmb\|_{r_\nu,D_\nu}\leq c\pmb\|X_{P_\nu}\pmb\|_{r_\nu,D_\nu},
\end{align*}
whence in the limit, $X_H\circ\Psi^*=D\Psi^*\cdot X_{N^*}$.

Thanks to Wu and Yuan \cite{wu}, we can complete the measure estimate, 
we thus omit the details here.

\section{Example}\label{sec:example}

In this section, we introduce an example to demonstrate the existence of quasi-periodic solution by Theorem~1.

Consider the following Hamiltonian lattice,
\begin{eqnarray}\label{HL}
H&=&\sum_{j\in{{\mathbb Z}^+}\setminus \Lambda}\frac{\alpha^2_j}{2}q^2_{j}+\frac{1}{2}p^2_j
+\sum_{j\in\Lambda}\beta_{j}W(q_j,p_j)+\e \sum_{j\in{\mathbb Z}^{+}} V(q_{j+1}-q_j),
\end{eqnarray}
where $q_j,~p_j\in {\mathbb R}$. For fixed positive integer $n_1$ and $n_2$, the set $\Lambda$
 is defined as  $\Lambda:=\{j|~n_1< j\le n_2,~j\in{\mathbb Z}^+\}$.
 Moreover, let $\alpha:=(\alpha_1,\cdots,\alpha_{n_1})^{\top}$ be the vector-parameter varying in certain closed region
 ${\cal O}\subset{\mathbb R}^{n_1}$, $\beta_{j}$, $j\in\Lambda$ and $\alpha_j$,~$j>n_2$ be fixed constants.
   The functions are defined as
   \begin{eqnarray*}
  && W(q_j,p_j):=q_j^4-6q_j^2p_j^2+p^4_j, \quad j\in\Lambda\\
   && V(q_j,p_j):=\frac{1}{\alpha+1}(q_{j+1}-q_j)^{\alpha+1},\quad j\in{\mathbb Z}^+,
   \end{eqnarray*}
    for fixed $\alpha>0$.

When $\Lambda$ is empty, Hamiltonian (\ref{HL}) can be seen as the energy function of the following
Newton's cradle lattices with as small Hertzian interaction, that is,
\begin{eqnarray}\label{newtoncradle}
\ddot{q}_j+\alpha^2_jq_j=\e{\rm D}V(q_{j+1}-q_j)-\e{\rm D}V(q_j-q_{j-1}).
\end{eqnarray}
Newton's cradle lattice is known as a simplified model for granular chains consisting of
linear pendular and nonlinear interaction in form of Hertz's forces. The existence of
(quasi-)periodic breather and the corresponding stability were studied in \cite{geng,GJ,GJ2}.
The methods mentioned in these works are both numerical simulations and theoretical proof,
such as KAM and Nash-Moser iterations. We mention that, in the view of infinite dimensional
Hamiltonian normal form, the Hamiltonian lattice with respect to equation (\ref{newtoncradle})
are non-degenerate in normal direction. In contract of that, Hamiltonian (\ref{HL}) allows
degeneracy in the direction of $(q_j,p_j)$ for $j\in\Lambda$.

Introduce the standard action-angle form variables $(x,y)$,
where $x\in{\mathbb T}^{n_1}$,~$y\in{\mathbb R}^{n_1}$ and the normal variables  $u,\bar{u}\in{l}^{\textsf{a},\textsf{p}}$.
We also denote as above that $u=(w_0,w)^{\top}$ and $\bar{u}=(\bar{w}_0,\bar{w})$,
where $w_0,~\bar{w}_0\in{\mathbb R}^{n_2-n_1}$ and $w,\bar{w}$ are vectors in infinite dimension.
The transformations are as follows,
\begin{eqnarray*}
&&q_j:=\sqrt{\frac{2}{\alpha_j}}\sqrt{y_j}\cos\theta_j,\quad\quad\quad p_{j}:=\sqrt{2\alpha_j}\sqrt{y_j}\sin\theta_j,\quad\quad\quad 1\le j\le n_1,\\
&&q_{j}:=\frac{w_{0,j-n_1}+\bar{w}_{0,j-n_1}}{2},\quad\quad~~~ p_{j}:=\frac{\bar{w}_{0,j-n_1}-\bar{w}_{0,j-n_1}}{2\sqrt{-1}},\quad\quad~~~ n_1+1\le j\le n_2,\\
&&q_{j}:=\frac{1}{\sqrt{2\alpha_j}}(w_{j-n_1}+\bar{w}_{j-n_1}),\quad
 p_{j}:=\sqrt{\frac{\alpha_j}{-2}}(\bar{w}_{j-n_1}-\bar{w}_{j-n_1}),\quad n_2+1\le j.
\end{eqnarray*}
 Then,
Hamiltonian (\ref{HL}) can be reduced into the following form, that is,
\begin{eqnarray}\label{reducedHL}
H(x,y,u,\bar{u},\alpha):=\langle\omega(\alpha),y\rangle+\langle w,\Omega\bar{w}\rangle+g(w_0,\bar{w}_0)+\e P(x,y,u,\bar{u},\alpha),
\end{eqnarray}
where
\begin{align*}
&\omega(\alpha):=\alpha,\\
&\Omega:={\rm diag}\{\alpha_{n_2+1},\alpha_{n_2+2},\cdots,\alpha_{n_2+j},\cdots\},\\
&g(w_0,\bar w_0):=\sum_{j=n_1+1}^{n_2}\beta_{j}(\frac{|w_{0,j-n_1}|^4}{2}+\frac{|\bar{w}_{0,j-n_1|}|^4}{2}),
\end{align*}
and the perturbation reads as
\begin{eqnarray*}
P(x,y,u,\bar{u},\alpha)&:=&\frac{y_1}{\alpha_1}\cos\theta_1+\sum^{n_1}_{j=2}\frac{2y_j}{\alpha_j}\cos\theta_j
-\sqrt{\frac{y_{n_1}}{2\alpha_{n_1}}}\cos\theta_{n_1}(w_{0,1}+\bar{w}_{0,1})\\
&~&-\sum_{j=1}^{n_1-1}2\sqrt{\frac{y_{j+1}y_j}{\alpha_{j+1}\alpha_{j}}}\cos\theta_{j+1}\cos\theta_{j}+O(|(u,\bar{u})|^2),
\end{eqnarray*}
by choosing $\alpha=1$ in (\ref{HL}).

It is obvious that $g(w_0,\bar{w}_0)$ are in the same form as it is mentioned in Remark~\ref{remark1} by choosing $p=q=2$,  so that it satisfies
assumption (\textbf{A0}). Moreover, choosing a fixed point $y_*=(y_{*1},~\cdots,~y_{*n_1})^{\top}$ with $y_{*j}\ne 0$,
the perturbation $P$ is real analytic with respect to $(x,y,u,\bar{u})$ in the following
complex neighborhood, that is,
$$
D(s,r)=\{(x,y,u,\bar{u}):~|{\rm Im}~x|<s,~\|y-y_*\|<r^2,~\|(w_0,\bar{w}_0)\|<r,~\|(w,\bar{w})\|<r^a\},
$$
where $0<s,r\ll1$,~$a\ge2$ is defined as in \ref{a}. It is obviously that (\textbf{A3}) is satisfied.
Then we obtain the following result.
\begin{corollary}
Consider Hamiltonian lattice (\ref{HL}), as well as the reduced system (\ref{reducedHL}), assume that
the tangent and normal frequencies $\omega(\alpha)$ and $\Omega$ satisfy assumptions \textbf{\textsc{(A1)}}-\textbf{\textsc{(A2)}}. It follows
from Theorem 1 that Hamiltonian lattice (\ref{HL}) admits a family of real analytic
embedding of a $n_1$-dimensional tori for the majority of $\alpha\in{\cal O}$.
\end{corollary}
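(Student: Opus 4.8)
The plan is to check that the reduced Hamiltonian (\ref{reducedHL}) falls exactly under the hypotheses of Theorem~\ref{th1}, and then to transport the conclusion back to the lattice (\ref{HL}) through the reduction map, which is real analytic, symplectic and $\varepsilon$-independent (so it does not spoil the smallness of the perturbation). Concretely, I would set $n=n_1$, take the $b=n_2-n_1$ degenerate normal modes to correspond to the index set $\mathcal{N}=\{n_1+1,\dots,n_2\}$, put $\Pi=\mathcal{O}$, $\xi=\alpha$, and read off $\omega(\alpha)=\alpha$, $\Omega=\diag(\alpha_{n_2+j})_{j\ge1}$, $z=(w_0,\bar w_0)$, and $g(z)=\sum_{j=n_1+1}^{n_2}\beta_j\bigl(\tfrac12|w_{0,j-n_1}|^4+\tfrac12|\bar w_{0,j-n_1}|^4\bigr)$. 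It then remains to verify \textbf{(A0)}--\textbf{(A3)} for (\ref{reducedHL}) and invoke Theorem~\ref{th1}.

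For \textbf{(A0)} I would appeal to Remark~\ref{remark1} with $p=q=2$: every component of $\nabla g(z)$ is, up to a nonzero constant, a single–variable odd monomial, of the form $\propto|w_{0,j-n_1}|^2w_{0,j-n_1}$ or $\propto|\bar w_{0,j-n_1}|^2\bar w_{0,j-n_1}$. Hence, on a small ball $O\ni0$, the linear homotopy $h_t(z)=tz+(1-t)\bigl(\nabla g(z)-\nabla g(0)\bigr)$ has no zero on $\partial O$, so $\deg(\nabla g(\cdot)-\nabla g(0),O^o,0)\neq0$. The weak convexity inequality $\|\nabla g(z)-\nabla g(z_*)\|_{\textsf{a},\bar{\textsf{p}}}\ge\sigma\|z-z_*\|_{\textsf{a},\textsf{p}}^{L}$ then holds with $L=3$ and $\sigma$ a positive multiple of $\min_{n_1<j\le n_2}|\beta_j|$, by applying the elementary bound $\bigl|\,|t|^2t-|s|^2s\,\bigr|\ge c\,|t-s|^3$ on bounded sets coordinatewise and summing over the finitely many degenerate directions against the weights defining $\|\cdot\|_{\textsf{a},\textsf{p}}$.

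Assumptions \textbf{(A1)} and \textbf{(A2)} are hypotheses of the Corollary, so nothing is needed beyond recording that $\omega(\alpha)=\alpha$ is a lipeomorphism on $\mathcal{O}$ and that the tails of $\Omega$ define a Lipschitz map into $\ell_\infty^{-\delta}$. For \textbf{(A3)} I would use the explicitly displayed $P$ (computed for the choice $\alpha=1$, for which the coupling $V(q_{j+1}-q_j)$ is quadratic and hence polynomial in the $q$'s): after the substitution, $P$ becomes a combination of $\sqrt{y_j}$, $\cos\theta_j$ and quadratic terms in $(u,\bar u)$, which is real analytic on the neighbourhood $D(s,r)$ since the expansion is centered at $y_*$ with all $y_{*j}\neq0$; the standard Cauchy/tame estimates then show $X_P:\mathcal{P}^{\textsf{a},\textsf{p}}\to\mathcal{P}^{\textsf{a},\bar{\textsf{p}}}$ is analytic with the required gain $\bar{\textsf{p}}\ge\textsf{p}$ (resp.\ $\bar{\textsf{p}}>\textsf{p}$ when $d=1$).

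With \textbf{(A0)}--\textbf{(A3)} in hand I would apply Theorem~\ref{th1} with $L=3$, $m\ge L+\tfrac{\sqrt{4L^2+2L}}{2}=3+\tfrac{\sqrt{42}}{2}$, $\tau\ge n_1-1$ and $a$ as in (\ref{a}): for a Cantor set $\mathcal{O}_\varepsilon\subset\mathcal{O}$ with $|\mathcal{O}\setminus\mathcal{O}_\varepsilon|\to0$ as $\varepsilon\to0$ it yields a Lipschitz family of real analytic embeddings $\Psi^*$ of $n_1$-tori carrying the linear flow $x\mapsto x+\omega_*(\alpha)t$ for (\ref{reducedHL}); composing $\Psi^*$ with the inverse of the reduction transformation produces the asserted family of real analytic $n_1$-dimensional invariant tori for (\ref{HL}), for the majority of $\alpha\in\mathcal{O}$. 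I expect the only genuinely non-formal points to be the precise bookkeeping in \textbf{(A0)} (the degree computation and the exponent $L=3$ in the weak convexity bound) and the care needed to see that, once centered at $y_*$, the substituted perturbation really does obey the tame analytic estimate defining \textbf{(A3)} on $D(s,r)$; the transport of the measure estimate is then inherited directly from Theorem~\ref{th1}.
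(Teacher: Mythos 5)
Your proposal takes essentially the same route as the paper: it verifies \textbf{(A0)} from the quartic structure of $g$ (Remark~\ref{remark1} with $p=q=2$, weak convexity with $L=3$), takes \textbf{(A1)}--\textbf{(A2)} as hypotheses, obtains \textbf{(A3)} from analyticity of the substituted perturbation on $D(s,r)$ centered at $y_*$ with $y_{*j}\neq0$, and then applies Theorem~\ref{th1} and transports the tori back through the action--angle reduction, which is exactly the paper's argument. The only detail to watch is your straight-line homotopy to the identity in the degree computation, which can acquire boundary zeros if some $\beta_j$ is negative; since $\nabla g$ is odd, Borsuk's theorem (as in the paper's appendix) or a product of one-dimensional degrees yields $\deg\neq0$ without any sign restriction, so the conclusion is unaffected.
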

\begin{remark}
We have to mention that, as a demonstration of Theorem 1, we simply choose $\alpha=1$.
However, $\alpha$ may not be an integer in certain applications. It leads to the result
that the perturbation is not real analytic with respect to $(u,~\bar{u})$ near the
origin so that KAM iteration in present paper can not be directly applied. Hence, we look
forward to generalize the result to such a system which  not only allows degeneracy in normal direction but also  $C^{1+\alpha}$
with respect to normal variables.
\end{remark}

\section{Appendix A. \textbf{Proof of Proposition \ref{example}}}\label{pro}
\begin{proof}
Obviously, for $\forall z\in (-2,2)\times(-2,2)$,
$$\nabla g(-z)=-\nabla g(z),~~~~\nabla g(0)=0,$$
and for $\forall z\in \partial(-2,2)\times(-2,2)$, $$\nabla g(z)\neq0,$$
Using Borsuk's theorem, we have
$$\deg(\nabla g(z),(-2,2)\times(-2,2),0)\neq0,$$
i.e., the topological degree condition in \textbf{(A0)} holds. For $\forall z, z_*\in[-1,1],$ and $z\neq z_*$, we have $$\nabla g(z)-\nabla g(z_*)=0,$$
but $$|z-z_*|^L>0,~~~\forall L\geq2,$$
which shows that the weak convexity condition in \textbf{(A0)} fails. Note that the perturbed motion equation in the direction of $w_0$ is $$\dot{w_0}=\bar w_0+\varepsilon^\ell\sin\frac{1}{\varepsilon}.$$
In order to ensure the existence of low dimensional invariant tori, we need to solve the following equation $$\bar w_0+\varepsilon^\ell\sin\frac{1}{\varepsilon}=0,$$
which implies that $\bar w_0$ is discontinuous and alternately appears on $(-2,-1)$ and $(1,2)$ as $\varepsilon\rightarrow0_+$. So, this example shows that the weak convexity condition in \textbf{(A0)} is necessary.

\end{proof}

\section*{Acknowledgments}

The second author is supported by  National Natural Science Foundation of China (grant No.12271204), Project of Science and Technology Development of Jilin Province, China (grant No.20200201265JC). The third author is supported by National Basic Research Program of China (grant No. 2013CB834100), National Natural Science Foundation of China (grant No. 11571065, 11171132, 12071175), Project of Science and Technology Development of Jilin Province, China (grant No. 2017C0281, 20190201302JC),
and Natural Science Foundation of Jilin Province (grant No. 20200201253JC).


\end{document}